\newtheorem{thm}{Theorem}[section]
\newtheorem{lem}[thm]{Lemma}
\newtheorem{prop}[thm]{Proposition}
\newtheorem{defn}[thm]{Definition}
\newtheorem{assumpt}{Assumption}[section]
\newtheorem{rmk}{Remark}[section]
\newcommand{\Aavg}{\mathbb{A}}
\newcommand{\eps}{\varepsilon}
\newcommand{\om}{\omega}
\newcommand{\expt}{\mathbb{E}}
\newcommand{\R}{\mathbb{R}}
\newcommand{\gen}{\mathcal{A}}
\newcommand{\dom}{\mathcal{D}}
\newcommand{\C}{\mathcal{C}}
\newcommand{\Ch}{\hat{\C}}
\newcommand{\Th}{\hat{T}}
\newcommand{\Ind}{\mathbf{1}_{\{0\}}}
\newcommand{\la}{\langle}
\newcommand{\ra}{\rangle}
\newcommand{\pih}{\hat{\pi}}
\newcommand{\Psiz}{\tilde{\Psi}}
\newcommand{\igen}{\mathcal{L}}
\newcommand{\mbbP}{\mathbb{P}}
\newcommand{\Om}{\Omega}
\newcommand{\pj}{\heartsuit}
\newcommand{\icond}{\xi}
\newcommand{\F}{\mathcal{F}}
\newcommand{\bcfonC}{\clubsuit}
\newcommand{\qtf}{\clubsuit}
\newcommand{\qtfr}{\spadesuit}
\newcommand{\genonqtf}{A^0}
\newcommand{\ham}{\mathfrak{h}}
\newcommand{\htf}{h}
\newcommand{\hlev}{\hbar}
\newcommand{\stopt}{\mathfrak{e}}
\newcommand{\timepr}{\mathfrak{T}}
\newcommand{\sgenonqtf}{\mathcal{L}}
\newcommand{\tgenonqtf}{\mathfrak{B}}
\newcommand{\Ssp}{\mathbf{S}}
\newcommand{\eqvpj}{\varpi}
\newcommand{\tmprval}{2\pi/\omega_c}
\newcommand{\tmpr}{\mathfrak{T}}
\newcommand{\eqvtoH}{\sqsupset}
\newcommand{\igenH}{\mathcal{L}_H}
\newcommand{\poiF}{\mathfrak{F}}
\newcommand{\molli}{\mathfrak{m}}
\newcommand{\consa}{\kappa}
\newcommand{\hprc}{\mathcal{H}}
\newcommand{\expteps}{\expt}
\newcommand{\tauqext}{\widetilde{\tau}_q}
\newcommand{\genonqtfext}{\widetilde{\genonqtf}}
\newcommand{\avgHproc}{\check{h}}
\newcommand{\stoptH}{\check{\stopt}}
\newcommand{\Xeps}{X^\eps}
\newcommand{\Gqfun}{\varphi}
\newcommand{\Gqfunwh}{\widehat{\Gqfun}}
\newcommand{\stabsol}{h}
\newcommand{\modcont}{\mathfrak{w}}
\title[Perturbations of critical DDE: averaging approach]{Nonlinear and additive white noise perturbations of linear delay differential equations at the verge of instability: an averaging approach}
\author{N. Lingala and N. Sri Namachchivaya}
\address{University of Illinois,\\
Urbana, IL, USA.}
\email{lingala1@illinois.edu, navam@illinois.edu}
\keywords{Delay differential equation; averaging; martingale problem.}
\subjclass[2010]{34K06, 34K27, 34K33, 34K50}
\begin{document}

\begin{abstract}
The characteristic equation for a linear delay differential equation (DDE) has countably infinite roots on the complex plane. We deal with linear DDEs that are on the verge of instability, i.e. a pair of roots of the characteristic equation (critical eigenvalues) lie on the imaginary axis of the complex plane, and all other roots (stable eigenvalues) have negative real parts. We show that, when the system is perturbed by small noise, under an appropriate change of time scale, the law of the amplitude of projection onto the critical eigenspace is close to the law of a certain one-dimensional stochastic differential equation (SDE) without delay. Further, we show that the projection onto the stable eigenspace is small. These results allow us to give an approximate description of the delay-system using an SDE (without delay) of just one dimension. The proof is based on the martingale problem technique.
\end{abstract}

\maketitle


\section{Introduction}\label{sec:intro}
Delay differential equations (DDE) arise in a variety of areas such as manufacturing systems, biological systems, and control systems.  
In some of these systems, variation of a parameter would result in loss of stability through Hopf bifurcation---for example, see \cite{Gabor1} for machining processes and  \cite{Longtin} for the study of eye-pupil response to incident light. Typically these systems are also influenced by noise, for example, inhomogenity in the material properties of workpiece in machining processes \cite{Buck_Kusk}, and unmodeled dynamics in biological systems. Therefore, it is important to study the effect of noise in the models of such systems.

Linear DDEs possess countably infinite modes, i.e. the characteristic equation has countably infinite roots on the complex plane.  In this paper we deal with linear DDEs that are on the verge of instability, i.e. a pair of roots of the characteristic equation (critical eigenvalues) lie on the imaginary axis of the complex plane, and all other roots (stable eigenvalues) have negative real parts. We show that, when the system is perturbed by small noise, under an appropriate change of time scale, the law of the amplitude of projection onto the critical eigenspace, is close to the law of a certain one-dimensional stochastic differential equation (SDE). Further, we show that the projection onto the stable eigenspace is small. These results allow us to give an approximate description of the delay-system using an SDE (\emph{without delay}) of just one dimension and provide rigorous framework for the multi-scale analysis done in \cite{Klosek_Kuske_multi_siam}. Our proof is based on the martingale problem technique, and closely follows \cite{NavamSowers}.

Let $(\Om,\F,\mbbP)$ be a complete probability space and $W=\{W(t)\}_{0 \leq t \leq T}$ be a real valued Wiener process defined on it. Suppose $\{\F_t\}_{0\leq t\leq T}$ is a family of increasing $\mbbP$-complete sub-$\sigma$-fields of $\F$ such that 
$$\F_t^W \subset \F_t \text{ and } \F_t \perp \sigma\{W(v)-W(u),\,\,t\leq u\leq v\leq T\}.$$

Let $\C:=C([-r,0],\R)$. Furnished with $\sup$ norm, $\C$ is a Banach space. For any $f\in C([-r,T],\R)$, define the \emph{segment extractor} 
$$\pj_t f\,(\theta):=f(t+\theta), \qquad \quad -r\leq \theta \leq 0, \qquad 0\leq t\leq T.$$

Let $G:\C\to \R$ be a $C^2$ function satisfying the Lipshitz condition:
\begin{align}\label{eq:FLipcond}
|G(\eta)-G(\tilde{\eta})| & \leq K_G||\eta-\tilde{\eta}||, \quad \forall \,\eta,\tilde{\eta} \in \C,
\end{align}
and let $F:\C \to \R$ be defined by $F(\eta)=\sigma$ for all $\eta \in \C$ with $\sigma>0$.
It can be shown that there exists a constant $K_g$ such that $G$ satisfies the growth condition
\begin{align}\label{eq:FGgrowthforunique}
|F(\eta)|^2+|G(\eta)|^2\leq K_g(1+||\eta||^2), \qquad \forall \,\eta \in \C.
\end{align}

Our object of study is an $\R$-valued random process $X(t)$ satisfying
\begin{align}\label{eq:main_in_intgl_form}
X(t)=\begin{cases} \icond(0)+\int_0^tL_0(\pj_u X)du+\eps^2\int_0^t G(\pj_u X)du+\eps\int_0^t F(\pj_u X)d{W}(u), \qquad t\geq 0,  \\ \icond(t), \qquad \qquad  \qquad \qquad  \qquad \qquad  \qquad \qquad \qquad -r\leq t\leq 0,\end{cases} 
\end{align}
where $\xi$ is a  $\C$-valued square integrable random variable that is $\F_0$ measurable, $L_0:\C \to \R$ is a continuous linear mapping and $\eps<<1$. We write \eqref{eq:main_in_intgl_form} in short form as
$$dX(t)=L_0(\pj_t X)dt\,+\,\eps^2 G(\pj_t X)dt\,+\,\eps F(\pj_t X)dW(t), \qquad \pj_0X=\xi.$$

\begin{assumpt}\label{ass:assumptondetsys}
We assume that the corresponding deterministic DDE 
\begin{align}\label{eq:detDDE}
\dot{x}(t)=L_0(\pj_t x)
\end{align}
is critical, i.e. a pair of roots of the characteristic equation $\lambda-L_0e^{\lambda \cdot}=0$ are on the imaginary axis (critical eigenvalues) and all other roots have negative real parts (stable eigenvalues). 
\end{assumpt}

Using spectral theory, the space $\C$ can be split as $P_{\Lambda}\oplus Q_{\Lambda}$ where $P_{\Lambda}$ is determined solely by the critical eigenvalues. Denoting by $\pi \eta$ the projection of $\eta\in \C$ onto $P_{\Lambda}$, it can be shown that for the unperturbed system \eqref{eq:detDDE}, $||\pi \pj_t x||_{P_{\Lambda}}$ (see remark \ref{rmk:sayingHevolvesslow} for the norm) is a constant. When the system is perturbed by noise, as in \eqref{eq:main_in_intgl_form}, $\hprc(t):=\frac12 ||\pi \pj_t X||^2_{P_{\Lambda}}$  varies slowly.
We show that, as $\eps  \to 0$, the law of $\hprc(t/\eps^2)$ converges to the law of a one-dimensional SDE, and the projection of $\pj_t X$ onto $Q_{\Lambda}$ is small. For small $\eps$, the one-dimensional  system (\emph{without delay}) obtained  in the limit gives an approximate description of \eqref{eq:main_in_intgl_form}.

Reduced dimensional description of randomly perturbed conservative systems using the Hamiltonian is discussed, for example, in the works of Freidlin and Wentzell \cite{FWbook} and Namachchivaya and Sowers \cite{NavamSowers}, \cite{NavamSowersUnified}. Systems with random perturbations and fast decaying components are considered in \cite{PapKoh75}, \cite{vanRoes}. The current paper is an application of the above ideas for systems with delay. Naturally, the proofs presented here closely follow those in \cite{NavamSowers}.

This paper is organized as follows: Useful results on the unperturbed DDE \eqref{eq:detDDE} are collected in section \ref{sec:unpertprob}, and those on stochastic DDE are collected in section \ref{sec:SDDEframework}. The variation of constants formula, which expresses solution of \eqref{eq:main_in_intgl_form} using that of \eqref{eq:detDDE}, is discussed in section \ref{sec:vocform}. The noise perturbed system \eqref{eq:main_in_intgl_form} is considered in section \ref{sec:perturbedsys} where we perform a change of time-scale and show that the projection of solution onto $Q_{\Lambda}$ is small. In section \ref{sec:mainresult} we identify the generator for limiting dynamics of $\hprc(t/\eps^2)$ and state our main result, the proof of which is carried out in subsequent sections. An example is discussed in section \ref{sec:numsim}---the numerical simulations shown there illustrate the usefulness of the result. Though the equation \eqref{eq:main_in_intgl_form} that we consider is that of an $\R$-valued process, the theory holds for $\R^n$ valued processes also. However in the multidimensional case, it is easier to work with complexifications---and we discuss this in a separate article \cite{LingPREarxiv}.

\begin{rmk}\label{rmk:about_quad_perturb}
With a little extra effort, convergence of the law of $\hprc(t/\eps^2)$ may also be established for systems perturbed by slightly stronger deterministic perturbations:
\begin{align}\label{eq:quadnon_main_in_short_form}
dX(t)=L_0(\pj_t X)dt\,+\,\eps G_q(\pj_t X)dt\,+\,\eps^2 G(\pj_t X)dt\,+\,\eps F(\pj_t X)dW(t), 
\end{align}
where $G_q$ is such that a certain kind of time averaged effect of $G_q$ is zero. For example, $G_q(\eta)$ which are homogenously quadratic in $\eta$ (say $G_q(\eta)=(\eta(0))^2$) satisfy this property. This assumption is needed because otherwise the effect of $G_q$ is significant in just times of order $1/\eps$ whereas the effects of $G$ and $F$ are significant in times of order $1/\eps^2$. In the limit $\eps \to 0$, $G_q$ would result in two additional drift terms for the diffusion process limit of $\hprc(t/\eps^2)$. We defer this analysis to section \ref{sec:strongerdetperturb}.
\end{rmk}

\begin{rmk}\label{rmk:about_mult_perturb}
Most of the proof remains same even if we consider $F$ as a function of $\pj X$ instead of a constant $\sigma$. In appendix \ref{sec:multnoise} we consider this and show the necessary changes that need to be made to the proofs.
\end{rmk}

\section{The unperturbed deterministic system}\label{sec:unpertprob}

The content in this section is taken as it is from \cite{NavWihs2} which draws heavily from \cite{Halebook} and \cite{Diekmanbook}.

Let the space $\C:=C([-r,0],\R)$ be equipped with the sup-norm $||\phi|| = \sup_{-r \leq \theta \leq 0} |\phi(\theta)|$. We are interested in scalar DDE which are representible as linear autonomous retarded functional differential equation (RFDE) of the form
\begin{align}\label{eq:autoRFDE}
\dot{x}(t)&=L_0 \pj_t x, \qquad \quad t\geq 0, \\ \notag
\pj_0x&=\icond \in \C,
\end{align}
where $L_0:\C\to \R$ is a continuous linear mapping. The solution $x(t+\theta ; \icond)=\pj_t x(\theta;\icond)$ of the RFDE gives rise to the strongly continuous semigroup $T(t):\C\to \C, \, t\geq 0$,
$$(T(t)\icond)(\theta)=\pj_t x(\theta;\icond)$$ 
with generator $\gen$ given by 
\begin{equation}\label{eq:generator_def_unpert}
\gen \phi=\frac{d}{d\theta}\phi, \qquad dom(\gen)=\dom(\gen)=\{\phi \in \C^1|\phi'(0)=L_0\phi\}
\end{equation}
($\C^1$ is the linear space of continuously differentiable functions on $[-r, 0]$, and $'= \frac{d}{d\theta}$). With the initial condition $\icond$ in $\dom(\gen)$, the equation \eqref{eq:autoRFDE} is equivalent to the abstract differential equation
\begin{align}\label{eq:abstDE_unpert}
\frac{d}{dt}\pj_t x=\gen \pj_t x, \qquad t\geq 0, \quad \pj_0 x=\icond \in \dom(\gen), 
\end{align}
where the differentiation with respect to $t$ is taken in the sense of the sup-norm in $\C$.

\subsection{Spectral properties of $\dom(\gen)$ and decomposition of $\C$} The following lemma puts together known facts on the spectrum of $\gen$, $spec\gen$, pertinent to our study. On the basis of the spectrum we will decompose the space $\C$ into a two-dimensional subspace with maximal exponential growth rate and an infinite-dimensional space on which
all exponential growth rates are negative.
\begin{lem}\label{lem:unpert_spec}
Let $\gen$ defined by \eqref{eq:generator_def_unpert} be the generator of the semigroup $T(t)$ defined by the solution of the RFDE. Then
\begin{enumerate}
\item $\gen$ has only a point spectrum.
\item $\lambda \in spec\gen$ iff $\lambda$ satisfies the characteristic equation $\Delta(\lambda)=\lambda-L_0e^{\lambda \cdot}=0$.
\item For any real number $r$, $card\{\lambda \in spec\gen | Re(\lambda)>r\}<\infty$. (Re denotes the real part of).
\item For each eigenvalue $\lambda$ of $\gen$, both the generalized eigenspace $E_{\lambda}=Nul((\lambda I-\gen)^q)$ and the range $R_{\lambda}=Range((\lambda I-\gen)^q)$ are $\gen$-invariant and norm-closed linear subspaces of the complexification $\C_{\mathbb{C}}=E_{\lambda}\oplus R_{\lambda}$. (Here $I$ is the identity and $q$ is the algebraic multiplicity).
\end{enumerate}
\end{lem}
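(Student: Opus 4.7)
The plan is to follow the classical spectral analysis of linear autonomous RFDEs (as in Hale and Diekmann) and handle the four assertions in sequence: parts (1)--(3) by direct computation on the resolvent and on the characteristic function $\Delta$, and part (4) by a Riesz spectral projection built from the compactness of the semigroup for $t > r$. First I would prove (1) and (2) simultaneously by writing down the resolvent explicitly. Given $\psi \in \C_{\mathbb{C}}$, solving $(\lambda I - \gen)\phi = \psi$ in $\dom(\gen)$ amounts to the scalar linear ODE $\phi'(\theta) = \lambda \phi(\theta) - \psi(\theta)$ on $[-r, 0]$ with boundary condition $\phi'(0) = L_0 \phi$. Variation of parameters gives $\phi(\theta) = e^{\lambda \theta}\phi(0) - \int_0^\theta e^{\lambda(\theta - s)}\psi(s)\,ds$, and substituting into the boundary condition produces a scalar equation for $\phi(0)$ whose coefficient is exactly $\Delta(\lambda) = \lambda - L_0 e^{\lambda \cdot}$. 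Hence $(\lambda I - \gen)$ has a bounded inverse iff $\Delta(\lambda) \neq 0$, which delivers (1) and (2) in one stroke.

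For (3) I would use that $\Delta$ is entire and that $L_0$ is continuous: on the half-plane $\{\mathrm{Re}\,\lambda \geq r\}$ the family $\{e^{\lambda \cdot}\}$ is uniformly bounded in $\C_{\mathbb{C}}$, so $|L_0 e^{\lambda \cdot}|$ is bounded by a constant depending only on $r$ and $\|L_0\|$. Any eigenvalue there satisfies $|\lambda| = |L_0 e^{\lambda \cdot}|$ and therefore lies in a bounded rectangle; since the nonzero entire function $\Delta$ has only finitely many zeros in any compact region, (3) follows. Part (4) is then the standard Riesz decomposition at an isolated spectral point: using (3), each $\lambda \in \mathrm{spec}\,\gen$ is isolated, so I would set $P_\lambda := (2\pi i)^{-1}\oint_{\Gamma_\lambda}(zI - \gen)^{-1}dz$ along a small circle enclosing only $\lambda$. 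The Dunford functional calculus gives that $P_\lambda$ is a bounded idempotent commuting with $\gen$, so $E_\lambda = P_\lambda \C_{\mathbb{C}}$ and $R_\lambda = (I - P_\lambda)\C_{\mathbb{C}}$ are norm-closed, $\gen$-invariant direct summands of $\C_{\mathbb{C}}$; their identification with $\mathrm{Nul}((\lambda I - \gen)^q)$ and $\mathrm{Range}((\lambda I - \gen)^q)$ for the algebraic multiplicity $q$ is then standard.

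The main obstacle is the finite-dimensionality of $E_\lambda$ (and correspondingly the norm-closedness of $R_\lambda$). Both rest on the fact that the semigroup $T(t)$ is compact for $t > r$: by Arzel\`{a}--Ascoli, $T(t)$ maps bounded sets in $\C_{\mathbb{C}}$ into equicontinuous families, because the extra smoothing provided by $\gen$ acting on elements of $\dom(\gen)$ yields equicontinuity on $[-r, 0]$ once the segment has moved past the initial time by at least $r$. Compactness of $T(t)$ for large $t$ then translates into compactness of the resolvent, which in turn forces each generalized eigenspace to be finite-dimensional and its complementary range to be closed. Parts (1)--(3) are clean exercises in ODE and entire-function growth; part (4) is where the genuine functional-analytic content of the lemma sits.
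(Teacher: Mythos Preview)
The paper does not actually prove this lemma: it is stated as a summary of ``known facts'' drawn from Hale--Verduyn Lunel and Diekmann et al.\ (see the opening line of Section~\ref{sec:unpertprob}). Your proposal is precisely the classical argument found in those references---explicit resolvent computation for (1)--(2), growth estimates on the entire function $\Delta$ for (3), and the Riesz projection coupled with eventual compactness of $T(t)$ for (4)---so it is correct and in line with the sources the paper cites.
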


We make the following assumptions on $\gen$ (equivalently on $L_0$).
\begin{assumpt}
$\max\{Re(\lambda)\, | \,\lambda \in spec\gen \}=0$. The set of eigenvalues with maximum real part is $\Lambda=\{\lambda_1,\lambda_2\}=\{\pm i\omega_c\}$, $\omega_c>0$, where $i\omega_c$ satisfies the characteristic equation $\Delta(i\omega_c)=i\omega_c-L_0e^{i\omega_c \cdot}=0$.
\end{assumpt}
The corresponding eigenfunctions in $\C_{\mathbb{C}}$ are $\varphi_{1,2}=\Phi_1\pm i\Phi_2$ with $\Phi_1(\theta)=\cos(\omega_c\theta)$ and $\Phi_2(\theta)=\sin(\omega_c\theta),$ $\theta \in [-r,0]$. We introduce the row vector valued function
\begin{equation*}
\Phi(\cdot)=[\Phi_1(\cdot),\,\Phi_2(\cdot)]= [\cos(\omega_c\cdot),\,\sin(\omega_c\cdot)].
\end{equation*}
Using the identity $\cos(\omega_c(t+\cdot))=\cos(\omega_ct)\cos(\omega_c\cdot)-\sin(\omega_ct)\sin(\omega_c\cdot)$ and the linearity of $L_0$, it can be shown that
\begin{align}\label{eq:TPhi_eq_PhieB}
T(t)\Phi(\cdot)=\Phi(\cdot)e^{Bt}, \qquad B=\left[\begin{array}{cc}0 & \omega_c \\ -\omega_c & 0 \end{array}\right]
\end{align}
with the derivative
\begin{equation}\label{eq:APhi=PhiB}
\gen\Phi(\cdot)=\frac{d}{dt}T(t)\Phi(\cdot)|_{t=0}=\Phi(\cdot)B.
\end{equation}
These facts suggest the following decomposition of $\C$. Setting $E_{\Lambda}:=E_{\lambda_1}\oplus E_{\lambda_2}=span_{\mathbb{C}}\{\varphi_1,\,\varphi_2\}$, we obtain
\begin{lem}\label{lem:decomp_unpert}
\begin{enumerate}
\item The subspaces $P_{\Lambda}:=E_{\Lambda}\cap \C=span_{\R}\{\Phi_1,\Phi_2\}$ and $Q_{\Lambda}=(R_{\lambda_1}\oplus R_{\lambda_2})\cap \C$ are closed and $\gen$-invariant in $(\C,||\cdot||)$, and $\C=P_{\Lambda}\oplus Q_{\Lambda}.$
\item If $\pi$ denotes the projection of $\C$ onto $P_{\Lambda}$ along $Q_{\Lambda}$ (i.e. $\pi^2=\pi$ on $\C$ and $\pi(Q_{\Lambda})=0)$, then $ \,\pi(\dom(\gen))\subset P_{\Lambda} \subset \dom(\gen)$. 

$\gen$ is completely reducible w.r.t. $(P_{\Lambda},Q_{\Lambda})$ and $\pi(\gen(\phi))=\gen(\pi(\phi))$ for all $\phi \in \dom(\gen)$. The restrictions $\gen^{P}=\gen|_{P_{\Lambda}}$ and $\gen^Q=\gen|_{Q_{\Lambda}}$ satisfy $\overline{\dom(\gen^P)}=P_{\Lambda}$ and $\overline{\dom(\gen^Q)}=Q_{\Lambda}$ and generate a strongly continuous semigroup on $P_{\Lambda}$ and $Q_{\Lambda}$, respectively.
\item While $||T(t)|_{P_{\Lambda}}||_{op}=1$, there are positive constants $\kappa$ and $K$ such that for all $\phi \in Q_{\Lambda}$ (see \cite{Halebook}, page 215, Corollary 6.1),  
$$||T(t)\phi||\leq Ke^{-\kappa t}||\phi||, \qquad t\geq 0.$$
\end{enumerate}
\end{lem}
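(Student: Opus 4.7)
The plan for part (1) is to iterate the single-eigenvalue decomposition from Lemma \ref{lem:unpert_spec}(4). Applying it first at $\lambda_1$ gives $\C_{\mathbb{C}} = E_{\lambda_1} \oplus R_{\lambda_1}$, and since $\lambda_2 \neq \lambda_1$ is also an eigenvalue of $\gen$ (hence of its restriction to the closed invariant subspace $R_{\lambda_1}$), a second application inside $R_{\lambda_1}$ yields $\C_{\mathbb{C}} = E_\Lambda \oplus (R_{\lambda_1} \cap R_{\lambda_2})$. To descend to the real space $\C$, I use that $\lambda_2 = \overline{\lambda_1}$: complex conjugation is an involution on $\C_{\mathbb{C}}$ that swaps $E_{\lambda_1} \leftrightarrow E_{\lambda_2}$ and preserves $R_{\lambda_1} \cap R_{\lambda_2}$, so both summands are conjugation-stable and each intersects $\C$ in a real subspace of the right dimension; for $E_\Lambda \cap \C$ this is $\mathrm{span}_{\R}\{\Phi_1,\Phi_2\} = P_\Lambda$ (via $\varphi_{1,2} = \Phi_1 \pm i\Phi_2$), while $Q_\Lambda$ is the other by definition. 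Finite-dimensionality gives closedness of $P_\Lambda$; $Q_\Lambda$ inherits closedness from the closed $R_{\lambda_j}$; and $\gen$-invariance of both comes from Lemma \ref{lem:unpert_spec}(4).

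For part (2), the inclusion $P_\Lambda \subset \dom(\gen)$ is a direct check: $\Phi_1,\Phi_2 \in C^1([-r,0],\R)$, and the boundary condition $\phi'(0) = L_0\phi$ demanded in \eqref{eq:generator_def_unpert} is precisely the real and imaginary parts of the characteristic equation $i\omega_c = L_0 e^{i\omega_c\cdot}$. Hence $\pi(\dom(\gen)) \subset P_\Lambda \subset \dom(\gen)$. For the commutation $\pi\gen = \gen\pi$, given $\phi \in \dom(\gen)$ write $\phi = \pi\phi + (I-\pi)\phi$ with $(I-\pi)\phi \in Q_\Lambda \cap \dom(\gen)$; $\gen$-invariance of $P_\Lambda$ and $Q_\Lambda$ places $\gen(\pi\phi) \in P_\Lambda$ and $\gen((I-\pi)\phi) \in Q_\Lambda$, so applying $\pi$ to $\gen\phi$ extracts $\gen(\pi\phi)$. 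That $\gen^P$ and $\gen^Q$ generate $C_0$-semigroups on $P_\Lambda$ and $Q_\Lambda$ follows from the standard fact that the restriction of a $C_0$-semigroup to a closed invariant subspace is itself a $C_0$-semigroup, whose generator is $\gen$ intersected with that subspace, and the domain of the restricted generator is automatically dense.

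For part (3), the bound $||T(t)|_{P_\Lambda}||_{op} = 1$ follows from the explicit formula $T(t)\Phi = \Phi e^{Bt}$ of \eqref{eq:TPhi_eq_PhieB}: writing $p = \Phi a$ with $a \in \R^2$, the skew-symmetry of $B$ makes $e^{Bt}$ a rotation, so $T(t)|_{P_\Lambda}$ acts as an isometry in the norm on $P_\Lambda$ transported from $|a|$, and all norms on the two-dimensional $P_\Lambda$ are equivalent. The exponential decay on $Q_\Lambda$ is the substantive step and is quoted from Hale's monograph: the DDE semigroup $T(t)$ is eventually compact (for $t > r$), so its growth bound equals the spectral bound of its generator; since $\mathrm{spec}(\gen^Q) = \mathrm{spec}(\gen) \setminus \{\pm i\omega_c\}$ and Lemma \ref{lem:unpert_spec}(3) ensures only finitely many eigenvalues lie above any prescribed horizontal line, this spectral bound is some $-\kappa < 0$, yielding the stated estimate. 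This last conversion from spectral gap to semigroup decay is the main obstacle, and is the only step where the proof relies on deeper DDE semigroup theory external to the present excerpt.
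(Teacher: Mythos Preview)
The paper does not actually prove this lemma; it merely cites Taylor--Lay \cite{Laybook} and Hale--Verduyn Lunel \cite{Halebook}. Your sketch is correct and follows the standard route one finds in those references: iterate the single-eigenvalue decomposition of Lemma~\ref{lem:unpert_spec}(4), descend from $\C_{\mathbb{C}}$ to $\C$ via the conjugation symmetry $\lambda_2=\bar\lambda_1$, verify $P_\Lambda\subset\dom(\gen)$ directly from the characteristic equation, and invoke eventual compactness of the DDE semigroup to convert the spectral gap on $Q_\Lambda$ into exponential decay.

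One small wrinkle in part~(3): your final appeal to equivalence of norms on the two-dimensional space $P_\Lambda$ shows only that $\|T(t)|_{P_\Lambda}\|_{op}$ is bounded above and below, not that it equals $1$ exactly. The value $1$ holds for the Euclidean coordinate norm $\|\Phi a\|_{P_\Lambda}:=|a|$ alluded to in Remark~\ref{rmk:sayingHevolvesslow}, which is the norm the paper has in mind here; in the sup norm inherited from $\C$ it can fail when $\omega_c r$ is small enough that $[-r,0]$ does not contain a full period of $\cos(\omega_c\cdot)$. So you should drop the norm-equivalence sentence and simply note that the stated equality is with respect to that coordinate norm.
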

See Taylor and Lay \cite{Laybook}, Introduction to Functional Analysis, Theorem 12.5, page 248, and section V.5, pages 287-289, as well as Hale and Verduyn-Lunel \cite{Halebook}, chapter 6.

\subsubsection{Representation of the projection operator $\pi$.}\label{subsubsec:coordrepofpi} (\cite{Halebook}, pages 198, 212).
Define the bilinear form $\la \phi,\psi\ra$ on $C([-r,0],\R)\times C([0,r],\R)$, given by
\begin{equation}\label{eq:bilinform}
\la \phi,\psi \ra := \phi(0)\psi(0)-L_0(\int_0^{\cdot}\phi(u)\psi(u-\cdot)du)
\end{equation}
and introduce the column-vector valued function on $[0,r]$, $\Psi(\cdot)=\left[\begin{array}{c}\psi_1(\cdot) \\ \psi_2(\cdot) \end{array}\right]$, where $\psi_i$ are linear combinations of $\cos(\om \cdot)$ and $\sin(\om \cdot)$ and are such that $\la \Phi_i,\psi_j\ra=\delta_{ij}$. Putting $\la \phi, \Psi \ra = \left[\begin{array}{c}\la \phi, \psi_1 \ra \\ \la \phi, \psi_2\ra \end{array}\right]$, we obtain for the projection $\pi: \C \to P_{\Lambda}$,
\begin{align}\label{eq:projoper_intermsof_bilform}
\pi(\phi)=\Phi\la \phi,\Psi \ra = \la \phi, \psi_1\ra \Phi_1 + \la \phi, \psi_2\ra \Phi_2,
\end{align}
\begin{align}\label{eq:charac_of_QLambda}
Q_{\Lambda}=ker(\pi)=\{\phi \in \C | \pi(\phi)=0\}.
\end{align}

\subsubsection{Coordinate representation of $P_{\Lambda}$.} Identifying $P_{\Lambda}=\{\Phi z \,\,|\,\, z=\left[\begin{array}{c}z_1\\z_2\end{array}\right]\in \R^2\}$ with $\R^2$, putting $\pj_t x(\cdot)=\Phi(\cdot)z(t)+y_t(\cdot)$, where we define $y_t(\theta):=x(t+\theta)-\Phi(\theta)z(t)\in Q_{\Lambda}$, and taking into account \eqref{eq:APhi=PhiB}, we can replace the system \eqref{eq:abstDE_unpert}, i.e. $\frac{d}{dt}\pj_t x=\gen \pj_t x=\gen (\Phi(\cdot)z(t)+y_t(\cdot))$, by
\begin{equation}\label{eq:proj_eqns_unpert}
\dot{z}(t)=Bz(t), \qquad \quad \frac{d}{dt}y_t=\gen y_t
\end{equation} 
with initial values $z(0)$ and $y_0(\cdot)$ given by $\pj_0x(\cdot)=\Phi(\cdot)z(0)+y_0(\cdot)$.

\begin{rmk}\label{rmk:sayingHevolvesslow}
From the above equation, recalling the structure of $B$, one can see that $\frac12(z_1^2(t)+z_2^2(t))=\frac12\la \pj_t x,\Psi \ra^*\la \pj_t x,\Psi \ra$ is a constant. When we deal with the perturbed system \eqref{eq:main_in_intgl_form}, this quantity evolves much slowly compared to $X$. Let $\hprc(t):=\frac12\la \pj_tX,\Psi \ra^*\la \pj_t X,\Psi \ra$. Roughly, our aim is to show that the law of $\hprc(t/\eps^2)$ converges to that of a SDE without delay, whose generator would be specified later. 
\end{rmk}

\section{Stochastic DDE framework}\label{sec:SDDEframework}
Since the perturbed system \eqref{eq:main_in_intgl_form} is a stochastic DDE (SDDE) and we intend to use martingale problem technique to prove weak convergence of laws, here we collect the results of SDDE framework that would be useful to us.

\begin{thm}\label{thm:existuniqofsol2sdde2ndmom}{Theorem 1.3.1 in \cite{Kal_Mandal}, Theorem 2.2.1 in \cite{SEAM_book}}: Suppose that $(\Om,\F,\mbbP)$, $W$ and $\{\F_t\}$ are given as in section \ref{sec:intro}. Suppose $a,b:\C\to \R$ are two continuous functionals satisfying the Lipshitz condition
\begin{equation}\label{ass:Lip_coeff}
|a(\eta)-a(\tilde{\eta})|\,+\,|b(\eta)-b(\tilde{\eta})|\,\leq\,K||\eta-\tilde{\eta}||.
\end{equation}
Suppose $0\leq s\leq T$ and $\icond$ is a $\F_0$-measurable $\C$-valued random variable with $\expt\,||\icond||^2<\infty$. Then the SDDE with the initial process $\icond$, given by
\begin{align}\label{eq:main2_in_intgl_form}
X(t)=\begin{cases} \icond(0)+\int_0^ta(\pj_u X)du+\int_0^t b(\pj_u X)d{W}(u), \qquad t\geq 0,  \\ \icond(t), \qquad \qquad  \qquad \qquad  \qquad \qquad  \qquad \qquad \qquad -r\leq t\leq 0,\end{cases} 
\end{align}
possesses a unique continuous strong solution such that $X$ is $\F_t$ adapted and $\expt\,||\pj_t X||^2<\infty$.
\end{thm}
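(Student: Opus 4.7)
The plan is to construct the solution by Picard iteration on the segment process $\pj_t X$ and pass to the limit in $L^2$. Define a sequence $\{X^n\}_{n \geq 0}$ by setting $X^0(t) = \icond(0)$ for $t \in [0,T]$, $X^0(t) = \icond(t)$ for $t \in [-r,0]$, and recursively
\begin{align*}
X^{n+1}(t) = \icond(0) + \int_0^t a(\pj_u X^n)\,du + \int_0^t b(\pj_u X^n)\,dW(u), \qquad t \in [0,T],
\end{align*}
with $X^{n+1}(t) = \icond(t)$ for $t \in [-r,0]$. The Lipschitz hypothesis together with the finite values $|a(0)|,|b(0)|$ yields a linear growth estimate $|a(\eta)|^2 + |b(\eta)|^2 \leq K'(1 + ||\eta||^2)$, which combined with $\expt\,||\icond||^2 < \infty$ and the It\^o isometry inductively produces continuous, $\F_t$-adapted iterates $X^n$ with $\expt \sup_{s \leq T} |X^n(s)|^2 < \infty$.

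The core contraction estimate uses the Burkholder--Davis--Gundy inequality for the stochastic integral and Cauchy--Schwarz for the drift to obtain, for $0 \leq t \leq T$,
\begin{align*}
\expt \sup_{0 \leq s \leq t} |X^{n+1}(s) - X^n(s)|^2 \leq C_T K^2 \int_0^t \expt\, ||\pj_u X^n - \pj_u X^{n-1}||^2\,du,
\end{align*}
where $C_T$ depends only on $T$ and the BDG constant. The bridge from segment norm to path norm is the elementary observation that $X^n$ and $X^{n-1}$ coincide with $\icond$ on $[-r,0]$, so
\begin{align*}
||\pj_u X^n - \pj_u X^{n-1}||^2 \leq \sup_{0 \leq s \leq u} |X^n(s) - X^{n-1}(s)|^2, \qquad u \in [0,T].
\end{align*}
Iterating gives the Picard bound $\expt \sup_{0 \leq s \leq T} |X^{n+1}(s) - X^n(s)|^2 \leq M (C_T K^2 T)^n / n!$ for some $M$ depending on $\icond$, $a(0)$, $b(0)$ and $T$; summability in $n$ yields an almost-sure uniform limit $X$ on $[-r,T]$, continuous and $\F_t$-adapted, and the Lipschitz property lets one pass to the limit in both integrals to identify $X$ as a solution of \eqref{eq:main2_in_intgl_form}.

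For uniqueness, taking the difference of two solutions $X,Y$ with common initial data $\icond$ and applying the same BDG--Cauchy-Schwarz estimate gives $\phi(t) := \expt \sup_{0 \leq s \leq t} |X(s) - Y(s)|^2 \leq C_T K^2 \int_0^t \phi(u)\,du$, whence Gronwall forces $\phi \equiv 0$. The second moment bound $\expt\, ||\pj_t X||^2 < \infty$ is obtained by the same template applied to $\expt \sup_{s \leq t} |X(s)|^2$, using the linear growth estimate in place of Lipschitz and closing the loop with another Gronwall inequality.

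The principal bookkeeping obstacle is controlling the interplay between the segment norm $||\pj_u X||$, which reaches back as far as $u - r$, and the supremum-of-path quantities that BDG naturally controls. Once the elementary segment-to-path bound above is in hand---and here it is especially clean because both candidate solutions agree with $\icond$ on $[-r,0]$---the rest of the argument is the standard Picard--Gronwall fixed-point scheme for It\^o SDE, with no essential new ingredient required by the presence of the delay.
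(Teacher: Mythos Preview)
The paper does not prove this theorem; it is simply quoted from \cite{Kal_Mandal} and \cite{SEAM_book} as a known result, so there is no ``paper's own proof'' to compare against. Your Picard iteration argument is the standard proof (and is essentially what the cited references do): the contraction estimate via BDG and Cauchy--Schwarz, the segment-to-path bound $||\pj_u X^n - \pj_u X^{n-1}|| \leq \sup_{0 \leq s \leq u}|X^n(s)-X^{n-1}(s)|$ enabled by the common initial data on $[-r,0]$, the factorial convergence, and the Gronwall uniqueness are all correct and complete.
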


\begin{defn}\label{def:quasitamedefn}
(Definition IV.4.2 of \cite{SEAM_book}):
A function $\qtf:\C \to \R$ is said to be quasi-tame function if there exist an integer $k>0$, $C^\infty$-bounded maps $\qtfr:\R^k\to \R$, $f_j:\R\to \R$ and piecewise $C^1$ functions $g_j:[-r,0]\to \R$ for $j=1,\ldots,k-1$ such that
\begin{equation}\label{eq:qtf_form}
\qtf(\eta)=\qtfr([f_1,g_1|\eta],\ldots,[f_{k-1},g_{k-1}|\eta],\eta(0))
\end{equation}
for all $\eta \in \C$,
where $[f,g|\eta]=\int_{-r}^0 f(\eta(s))g(s)ds.$ The derivatives $g'_j$ are assumed to be absolutely integrable.

The space of quasi-tame functions is denoted by $\tau_q$.
\end{defn}

\begin{lem}\label{lem:howqtfevolves}
(Lemma 1.3.1 of \cite{Kal_Mandal}).
Suppose $X(t)$ is the solution to \eqref{eq:main2_in_intgl_form} and $f\in C_b(\R;\R)$ and $g\in C^1([-r,0];\R)$. Then
$$[f,g|\pj_tX]-[f,g|\pj_0 X]=\int_0^t\left\{ f(\pj_uX(0))g(0)-f(\pj_uX(-r))g(-r)-[f,g'|\pj_uX]\right\}du.$$
\end{lem}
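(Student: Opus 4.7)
The plan is to treat this as a purely pathwise identity: no stochastic calculus is required, since the claim depends only on $X$ being continuous on $[-r,t]$, $f$ being bounded and continuous, and $g$ being $C^1$. Accordingly, I would fix $\omega \in \Om$ and work with the sample path $X(\cdot,\omega)$. The two ingredients will be Fubini's theorem on the rectangle $[0,t]\times[-r,0]$ and one integration by parts in the delay variable $s$.

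Concretely, I would start from the right-hand side and rewrite the term $\int_0^t [f,g'|\pj_u X]\,du = \int_0^t\int_{-r}^0 f(X(u+s))g'(s)\,ds\,du$. The integrand is bounded and continuous on a compact rectangle, so Fubini applies; after swapping the order and substituting $v=u+s$ in the $u$-integral (at each fixed $s$), this becomes $\int_{-r}^0 g'(s)\,H(s)\,ds$, where $H(s) := \int_s^{t+s} f(X(v))\,dv$. Since $f\circ X$ is continuous, $H$ is $C^1$ on $[-r,0]$ with $H'(s)=f(X(t+s))-f(X(s))$ by the fundamental theorem of calculus. Integrating by parts in $s$ produces two boundary contributions, $g(0)H(0)$ and $-g(-r)H(-r)$, together with an integral involving $H'$. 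Reversing the substitution identifies $g(0)H(0) = \int_0^t f(\pj_u X(0))g(0)\,du$ and $g(-r)H(-r) = \int_0^t f(\pj_u X(-r))g(-r)\,du$, so these boundary terms exactly match, and hence cancel, the first two terms on the right-hand side of the claim. The leftover $-\int_{-r}^0 g(s)H'(s)\,ds$ is, by definition, $[f,g|\pj_t X] - [f,g|\pj_0 X]$, which is the left-hand side.

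I do not expect a substantive obstacle: the computation is essentially Fubini plus one integration by parts. The only conceptual point worth noting is that differentiability of $X$ in $t$ is never invoked—this matters because sample paths of the SDDE solution are a.s.\ non-differentiable, so any proof that tried to write $\frac{d}{dt}[f,g|\pj_t X]$ directly would fail. If one later wishes to extend the statement to the piecewise $C^1$ functions $g$ with absolutely integrable derivative appearing in Definition~\ref{def:quasitamedefn} (the class relevant for quasi-tame functionals), the integration by parts step is simply split across the finitely many breakpoints of $g$, and no additional technical input is required.
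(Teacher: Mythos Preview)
Your argument is correct: the identity is purely pathwise, and Fubini on the rectangle $[0,t]\times[-r,0]$ followed by one integration by parts in the delay variable is exactly what is needed. The paper does not supply its own proof but simply cites Lemma~1.3.1 of Kallianpur--Mandal; your write-up would serve as a self-contained proof of that cited result, and the standard argument there proceeds along the same lines.
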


\begin{lem}\label{lem:appliItoformula}
(pages 10-11 of \cite{Kal_Mandal}).
Let $\qtf \in \tau_q$ be of the form \eqref{eq:qtf_form} and $X(t)$ be the solution to \eqref{eq:main2_in_intgl_form}. Then 
\begin{align*}
\qtf(\pj_tX)-\qtf(\pj_0X)=&\sum_{j=1}^{k-1}\int_0^t\left\{ f_j(\pj_uX(0))g_j(0)-f_j(\pj_uX(-r))g_j(-r)-[f_j,g'_j|\pj_uX]\right\}\partial_j\qtfr\,du \\
&\quad + \int_0^t a(\pj_uX)\partial_k\qtfr\,du \,\,+\,\, \int_0^t b(\pj_uX)\partial_k\qtfr\,dW \,\,+\,\, \frac12 \int_0^t b^2(\pj_uX)\partial^2_k\qtfr\,du,
\end{align*}
where the partial derivatives are evaluated at $${([f_1,g_1|\pj_uX],\ldots,[f_{k-1},g_{k-1}|\pj_uX],\pj_uX(0))}.$$
\end{lem}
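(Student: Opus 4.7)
The plan is to realize $\qtf(\pj_t X)$ as a smooth function of a finite-dimensional semimartingale and then invoke the standard multi-dimensional It\^o formula. Concretely, introduce the $\R^k$-valued process $Y(t)=(Y_1(t),\ldots,Y_{k-1}(t),Y_k(t))$ defined by
\begin{equation*}
Y_j(t):=[f_j,g_j|\pj_t X]=\int_{-r}^0 f_j(\pj_t X(s))\,g_j(s)\,ds,\quad j=1,\ldots,k-1,\qquad Y_k(t):=X(t)=\pj_t X(0),
\end{equation*}
so that by construction $\qtf(\pj_t X)=\qtfr(Y(t))$ with $\qtfr\in C^\infty_b(\R^k;\R)$.

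First, I would compute the semimartingale decomposition of each $Y_j$. For $j\le k-1$, Lemma \ref{lem:howqtfevolves} (with $f=f_j$, $g=g_j$) shows
\begin{equation*}
dY_j(t)=\bigl\{f_j(\pj_t X(0))g_j(0)-f_j(\pj_t X(-r))g_j(-r)-[f_j,g'_j|\pj_t X]\bigr\}\,dt,
\end{equation*}
so $Y_j$ is a continuous process of bounded variation with no martingale part. (The hypothesis that $g_j'$ is absolutely integrable is exactly what makes the last integral well defined.) For the remaining coordinate $Y_k=X$, the SDDE \eqref{eq:main2_in_intgl_form} gives
\begin{equation*}
dY_k(t)=a(\pj_t X)\,dt+b(\pj_t X)\,dW(t).
\end{equation*}

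Next I would apply the standard multi-dimensional It\^o formula to $\qtfr(Y(t))$. Because $Y_1,\ldots,Y_{k-1}$ are of bounded variation and continuous, all cross variations $\la Y_i,Y_j\ra$ with $\min(i,j)\le k-1$ vanish, and $d\la Y_k\ra_t=b^2(\pj_t X)\,dt$. Hence
\begin{align*}
\qtfr(Y(t))-\qtfr(Y(0))=\sum_{j=1}^{k-1}\int_0^t\partial_j\qtfr\,dY_j(u)+\int_0^t\partial_k\qtfr\,dY_k(u)+\tfrac12\int_0^t\partial_k^2\qtfr\,b^2(\pj_u X)\,du,
\end{align*}
where every partial derivative of $\qtfr$ is evaluated at $Y(u)=([f_1,g_1|\pj_u X],\ldots,[f_{k-1},g_{k-1}|\pj_u X],\pj_u X(0))$. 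Substituting the decompositions above yields precisely the claimed identity.

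The only step that requires any real care is the justification of the It\^o formula for this mixed process and the integrability of the resulting stochastic integral; both follow routinely since $\qtfr$ and its derivatives are bounded (quasi-tame functions are defined with $C^\infty$-bounded outer map), the coefficients $a,b$ are Lipschitz, and $\expt\,\|\pj_t X\|^2<\infty$ by Theorem \ref{thm:existuniqofsol2sdde2ndmom}. Thus the expected obstacle is merely a notational one: carefully identifying which coordinate of $Y$ carries the diffusive contribution so that the second-order term reduces to the single summand $\partial_k^2\qtfr$; no genuinely new analytic input beyond Lemma \ref{lem:howqtfevolves} and classical It\^o calculus is needed.
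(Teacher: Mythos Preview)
Your proof is correct and follows the natural approach: realize $\qtf(\pj_t X)$ as $\qtfr$ applied to a finite-dimensional semimartingale, use Lemma~\ref{lem:howqtfevolves} for the bounded-variation coordinates, and apply the classical It\^o formula. The paper does not supply its own proof of this lemma---it simply cites pages 10--11 of \cite{Kal_Mandal}---so there is nothing further to compare; your argument is exactly the standard derivation one expects to find there.
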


\begin{defn}\label{def:A0_def}
Denote by $C_b$ the Banach space of all bounded continuous functions $\bcfonC:\C \to \R$ with the sup norm
$$||\bcfonC||_{C_b}=\sup\{|\bcfonC(\phi)\,|\,:\,\phi \in \C\}.$$
Define an operator $\genonqtf$ on $C_b$ with $\dom(\genonqtf)=\tau_q$ as follows: Let $\qtf \in \dom(\genonqtf)$ be of the form \eqref{eq:qtf_form}. Then
\begin{align}\label{eq:A0_def}
(\genonqtf \qtf)(\eta)&:=\sum_{j=0}^{k-1}\bigg( f_j(\eta(0))g_j(0)-f_j(\eta(-r))g_j(-r)-[f_j,g_j'|\eta] \bigg)\partial_j\qtfr \notag \\ &\qquad \qquad + a(\eta) \partial_k\qtfr  \,\,+\,\, \frac12 b^2(\eta)\partial_k^2 \qtfr,
\end{align}
where the partial derivatives are evaluated at ${([f_1,g_1|\eta],\ldots,[f_{k-1},g_{k-1}|\eta],\eta(0))}$.
\end{defn}

\begin{thm}\label{thm:qtfmart_general}\footnote{Also see p.26 of \cite{SEAM_98_workshop} which works with additional assumption that the coefficients are globally bounded.} Theorem 1.3.2 of \cite{Kal_Mandal}: 
Suppose $X(t)$ for $-r\leq t\leq T$ is given by the SDDE \eqref{eq:main_in_intgl_form} with the coefficients $a,b$ satisfying the Lipshitz condition \eqref{ass:Lip_coeff}. Suppose $\qtf\in \tau_q$. Then
\begin{equation}\label{eq:qtf_mart}
M_t^{\qtf}:=\qtf(\pj_tX)-\qtf(\pj_0X)-\int_0^t (\genonqtf \qtf)(\pj_uX)du
\end{equation}
is a $\F_t$ martingale. 
\end{thm}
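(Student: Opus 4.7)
The plan is to apply Lemma \ref{lem:appliItoformula} directly and then read off the desired decomposition, verifying that the stochastic integral piece is a genuine martingale rather than merely a local martingale.

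Fix $\qtf \in \tau_q$ of the form \eqref{eq:qtf_form}. By Lemma \ref{lem:appliItoformula}, the increment $\qtf(\pj_t X) - \qtf(\pj_0 X)$ equals the sum of four contributions: the integral $\sum_{j=1}^{k-1}\int_0^t\{ f_j(\pj_uX(0))g_j(0)-f_j(\pj_uX(-r))g_j(-r)-[f_j,g'_j|\pj_uX]\}\partial_j\qtfr\, du$ coming from the $[f_j,g_j|\pj_u X]$ components, the drift $\int_0^t a(\pj_u X) \partial_k\qtfr \, du$ from the semimartingale coordinate, the Itô correction $\tfrac12\int_0^t b^2(\pj_u X) \partial_k^2 \qtfr \, du$, and finally the stochastic integral $\int_0^t b(\pj_u X)\partial_k\qtfr\, dW(u)$. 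The first step is a direct comparison with Definition \ref{def:A0_def}: the sum of all non-stochastic pieces coincides term-by-term with $\int_0^t(\genonqtf\qtf)(\pj_u X)\, du$. Consequently,
$$M_t^{\qtf} = \int_0^t b(\pj_u X)\,\partial_k\qtfr\bigl([f_1,g_1|\pj_u X],\ldots,[f_{k-1},g_{k-1}|\pj_u X],\pj_u X(0)\bigr)\, dW(u).$$

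The second step is to upgrade this stochastic integral from a local martingale to a true $\F_t$-martingale. Adaptedness is inherited from the adaptedness of $X$ given by Theorem \ref{thm:existuniqofsol2sdde2ndmom}. Since $\qtfr$ is $C^\infty$-bounded, $\partial_k\qtfr$ is bounded by some constant $M$. The Lipschitz condition \eqref{ass:Lip_coeff} yields the linear growth bound $b^2(\eta)\leq K_g(1+\|\eta\|^2)$, and Theorem \ref{thm:existuniqofsol2sdde2ndmom} delivers $\sup_{0\leq u\leq T}\expt\|\pj_u X\|^2 < \infty$. Combining,
$$\expt\int_0^T b^2(\pj_u X)(\partial_k\qtfr)^2\, du \;\leq\; M^2 K_g\int_0^T \bigl(1+\expt\|\pj_u X\|^2\bigr)\, du \;<\; \infty,$$
which suffices to conclude both $\expt|M_t^{\qtf}|^2<\infty$ and the martingale property via the standard $L^2$-theory of Itô integrals.

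There is no substantive obstacle here: the heart of the theorem is the chain-rule-with-memory already packaged in Lemma \ref{lem:appliItoformula}. The only mildly delicate item is the bookkeeping match between the four terms of Lemma \ref{lem:appliItoformula} and the definition of $\genonqtf\qtf$ in Definition \ref{def:A0_def}, together with the verification that $\partial_k\qtfr$ is evaluated at exactly the same vector $([f_1,g_1|\pj_u X],\ldots,[f_{k-1},g_{k-1}|\pj_u X],\pj_u X(0))$ on both sides. Once this is aligned, the conclusion is immediate.
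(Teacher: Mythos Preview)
Your argument is correct. The paper does not supply its own proof of this theorem: it is stated as a citation of Theorem~1.3.2 in \cite{Kal_Mandal}, so there is no in-paper proof to compare against. Your route---reading off the decomposition from Lemma~\ref{lem:appliItoformula}, matching the drift with Definition~\ref{def:A0_def}, and then checking square-integrability of the integrand via the boundedness of $\partial_k\qtfr$ and the linear growth of $b$---is exactly the natural one and is what the cited reference does. One minor remark: Theorem~\ref{thm:existuniqofsol2sdde2ndmom} as stated gives $\expt\|\pj_t X\|^2<\infty$ pointwise in $t$, not the uniform bound $\sup_{0\le u\le T}\expt\|\pj_u X\|^2<\infty$ you invoke; the latter does hold (it is the standard Gronwall estimate under Lipschitz coefficients, and is part of the proof of that theorem in \cite{Kal_Mandal} and \cite{SEAM_book}), but strictly speaking you are using slightly more than the quoted statement. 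For the martingale conclusion you only need $\int_0^T \expt\|\pj_u X\|^2\,du<\infty$, which follows either way.
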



\section{The variation of constants formula}\label{sec:vocform}

In order to express the solution of \eqref{eq:main_in_intgl_form} using that of \eqref{eq:detDDE}, we need to use the solution of 
\begin{align}\label{eq:usefuleq_det_lin_unpert}
\dot{x}=L_0\pj_t x
\end{align}
with the initial condition $\pj_0\,x=\Ind$ where $\Ind$ is the indicator for $\{0\}$ over $[-r,0]$. Clearly $\Ind$ does not belong to $\C$ and so we need to extend the space $\C$.

The following lemma puts together the results pertaining to the extension. These are taken from p.192-193, 206-207 of \cite{SEAM_book} which makes use of \cite{Halebook}.

\begin{lem}\label{lem:collec_ext_res}
Let $\Ch:=\hat{C}([-r,0];\R)$ be the Banach space of all bounded measurable maps $[-r,0]\to \R$, given the sup norm.
\begin{enumerate}
\item Using Riesz representation it is possible to extend $L_0$ to $\Ch$. Denote this extension also by $L_0$. Solving the linear system \eqref{eq:usefuleq_det_lin_unpert}
for initial data in $\Ch$, the semigroup $T(t)$ can be extended to one on $\Ch$. Denote the extension by $\Th(t)$.
\item The representation \ref{subsubsec:coordrepofpi} of the projection operator $\pi$ gives a natural extension to a continuous linear map $\pih:\Ch \to P_{\Lambda}$. The formula \eqref{eq:projoper_intermsof_bilform} $\pih(\phi)=\Phi\la \phi,\Psi\ra$ holds even for $\phi \in \Ch$. In particular $\pih(\Ind)=\Phi \la \Ind, \Psi\ra =\Phi \Psi(0)$.
\item The space $\Ch$ has a topological splitting  $\Ch=P_{\Lambda}\oplus \hat{Q}_{\Lambda}$, where $$\hat{Q}_{\Lambda}=\{\eta \,:\, \eta \in \Ch,\,\, \pih \eta =0\}.$$
\item The above splitting is invariant under the semigroup $\Th$, i.e. for each $\eta \in \Ch$ and $t\geq 0$, we have
$$\pih \Th(t)\eta = \Th(t)\pih \eta, \qquad \quad (I-\pih)\Th(t)\eta = \Th(t)(I-\pih)\eta.$$
\item There exists positive constants $\kappa$ and $K$ such that for all $\phi \in \hat{Q}_{\Lambda}$ 
$$||\Th(t)\phi||\leq Ke^{-\kappa t}||\phi||, \qquad t\geq 0.$$
\end{enumerate}
\end{lem}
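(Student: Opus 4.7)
The plan is to handle the five claims in order, with the Riesz representation theorem as the principal tool.

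For (1), I would invoke Riesz representation: since $L_0:\C\to\R$ is continuous linear, there is a finite signed Borel measure $\mu$ on $[-r,0]$ with $L_0\phi=\int_{-r}^{0}\phi(\theta)\,d\mu(\theta)$. This formula makes sense verbatim for every $\phi\in\Ch$, defining the bounded linear extension with the same operator norm. To extend the semigroup, I would solve $\dot x(t)=L_0\pj_t x$ for $t\geq 0$ from initial data $\eta\in\Ch$ by the method of steps: on each interval $[kr,(k+1)r]$ the equation reduces to a Volterra problem driven by the previous segment, so a unique bounded measurable solution exists, and $\Th(t)\eta:=\pj_t x$ defines a semigroup on $\Ch$ agreeing with $T(t)$ on $\C$.

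For (2), the bilinear form $\la\phi,\psi\ra=\phi(0)\psi(0)-L_0\bigl(\int_0^{\cdot}\phi(u)\psi(u-\cdot)du\bigr)$ still makes sense for $\phi\in\Ch$ once $L_0$ is extended, because the inner integral is a continuous (in particular bounded measurable) function of its upper limit. Setting $\pih(\phi):=\Phi\la\phi,\Psi\ra$ gives a continuous linear map $\Ch\to P_\Lambda$ which agrees with $\pi$ on $\C$. For the identity $\pih(\Ind)=\Phi\Psi(0)$: the Lebesgue integrand vanishes almost everywhere, so $\int_0^\theta \Ind(u)\psi(u-\theta)\,du=0$, whence $\la\Ind,\psi\ra=\Ind(0)\psi(0)=\psi(0)$. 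Since $\pih$ restricts to the identity on $P_\Lambda\subset\C$, it is idempotent, and this gives the topological direct sum $\Ch=P_\Lambda\oplus\ker\pih=P_\Lambda\oplus\hat{Q}_\Lambda$ in (3).

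For (4), one cannot appeal to density of $\C$ in $\Ch$ in the sup norm, so I would verify the commutation directly. Writing $z_i(t):=\la\Th(t)\eta,\psi_i\ra$ and differentiating in $t$ using the integrated form $x(t)=\eta(0)+\int_0^t L_0\pj_u x\,du$ together with the measure representation of $L_0$, the same manipulation that produced \eqref{eq:APhi=PhiB} on $\dom(\gen)$ now yields $\dot z(t)=Bz(t)$ with $z(0)=\la\eta,\Psi\ra$. Hence $\la\Th(t)\eta,\Psi\ra=e^{Bt}\la\eta,\Psi\ra$, so $\pih\Th(t)=\Th(t)\pih$ on $\Ch$ and $\hat{Q}_\Lambda$ is $\Th$-invariant. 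This is the step I expect to be the main obstacle, because the abstract identity $\frac{d}{dt}\la\Th(t)\eta,\psi_i\ra=\la\gen\Th(t)\eta,\psi_i\ra$ only holds on $\dom(\gen)$ and must be replaced, when $\Th(t)\eta$ has a jump at zero, by a direct computation at the level of the Volterra equation.

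For (5), I would exploit the smoothing built into the method-of-steps construction: for any $\eta\in\Ch$ and $t\geq r$, the segment $\Th(t)\eta$ lies in $\C$, since $x$ is continuous on $[0,\infty)$ as an indefinite integral. Hence for $\phi\in\hat{Q}_\Lambda$, invariance from (4) gives $\Th(r)\phi\in\hat{Q}_\Lambda\cap\C=Q_\Lambda$, and for $t\geq r$ Lemma \ref{lem:decomp_unpert}(3) yields $||\Th(t)\phi||=||T(t-r)\Th(r)\phi||\leq Ke^{-\kappa(t-r)}||\Th(r)\phi||$. Bounding $||\Th(r)||_{op}$ by a constant depending only on $r$ and the total variation of $\mu$, via a Gronwall estimate applied to the method-of-steps recursion, and absorbing the factor $e^{\kappa r}$, gives the stated inequality after enlarging $K$; for $t\in[0,r]$ the bound holds trivially.
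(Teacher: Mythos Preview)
The paper does not prove this lemma at all: it is stated as a collection of known facts and simply attributed to \cite{SEAM_book}, pp.~192--193 and 206--207, which in turn draws on \cite{Halebook}. So there is no ``paper's proof'' to compare against beyond the citation.

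Your outline is correct and is essentially the argument one finds in those references. The points you highlight are the right ones: the Riesz measure gives the extension of $L_0$, the method of steps gives $\Th(t)$, the explicit bilinear form extends $\pi$, and the smoothing $\Th(r):\Ch\to\C$ together with the known decay on $Q_\Lambda$ gives (5). Your identification of (4) as the delicate step is accurate. One way to complete it cleanly, avoiding differentiation of the possibly non-differentiable initial segment, is to rewrite $\la\Th(t)\eta,\psi_i\ra$ using the representation $\la\eta,\psi_i\ra=\eta(0)\psi_i(0)+\int_{-r}^0\eta(u)g_i(u)\,du$ from Section~\ref{sec:mainresult}, change variables to $\int_{t-r}^t x(v)g_i(v-t)\,dv$, and differentiate in $t$: this requires only continuity of $x$ on $[t-r,t]$ and piecewise regularity of $g_i$, not differentiability of $\eta$. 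The resulting identity $\dot z=Bz$ then follows from the adjoint relation satisfied by $\Psi$, exactly as in the case $\eta\in\dom(\gen)$.
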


\begin{prop}\label{prop:vocformula}
Under the assumptions on $G$ and $L_0$ listed in section \ref{sec:intro}, the solution to 
\begin{align}\label{eq:main_in_intgl_form_for_VOC}
X(t)=\begin{cases} \icond(0)+\int_0^tL_0(\pj_u X)du+\int_0^t G(\pj_u X)du+\int_0^t \sigma d{W}(u), \qquad t\geq 0,  \\ \icond(t), \qquad \qquad  \qquad \qquad  \qquad \qquad  \qquad \qquad \qquad -r\leq t\leq 0,\end{cases} 
\end{align}
with $\xi \in \C$ such that $\expt||\xi||^2 < \infty$, satisfies the variation of constants formula
\begin{align}\label{eq:thevocform}
\pj_t X = \Th(t)\xi + \int_0^t \Th(t-u)\Ind G(\pj_u X)du + \int_0^t \Th(t-u)\Ind \sigma dW(u),
\end{align}
where
$$\bigg(\int_0^t \Th(t-u)\Ind \sigma dW(u)\bigg)(s):=\int_0^t  \big(\Th(t-u)\Ind \big)(s)\, \sigma dW(u), \qquad \forall s\in [-r,0],$$
$$\bigg(\int_0^t \Th(t-u)\Ind G(\pj_u X)du\bigg)(s):=\int_0^t  \big(\Th(t-u)\Ind \big)(s)\, G(\pj_u X)du, \,\,\, \forall s\in [-r,0].$$
Further the projections satisfy
\begin{align*}
\pi \pj_t X &= \Th(t)\pih\xi + \int_0^t \Th(t-u)\pih\Ind G(\pj_u X)du + \int_0^t \Th(t-u)\pih \Ind \sigma dW(u), \\
(I-\pi)\pj_t X &= \Th(t)(I-\pih)\xi + \int_0^t \Th(t-u)(I-\pih)\Ind G(\pj_u X)du \,\,\,+\\
& \qquad \qquad + \int_0^t \Th(t-u)(I-\pih) \Ind \sigma dW(u).
\end{align*}
\end{prop}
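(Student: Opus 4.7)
The plan is to first establish the variation-of-constants (VOC) formula for inhomogeneous \emph{deterministic} linear DDE (which is classical), and then extend to the stochastic setting by approximating the Brownian motion by smooth processes and passing to the limit. The projection formulas then follow by applying the bounded linear operators $\pih$ and $I-\pih$ to both sides of the full VOC and using the intertwining property in Lemma \ref{lem:collec_ext_res}(4).

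First I would recall the deterministic inhomogeneous VOC: if $h:[0,T]\to\R$ is continuous (or $L^2_{loc}$) and $x$ satisfies
\begin{equation*}
x(t)=\xi(0)+\int_0^t L_0(\pj_u x)\,du+\int_0^t h(u)\,du,\qquad \pj_0x=\xi,
\end{equation*}
then $\pj_t x=\Th(t)\xi+\int_0^t \Th(t-u)\Ind\,h(u)\,du$ in $\Ch$, where the integral is defined pointwise in $s\in[-r,0]$. This is proved in Hale--Verduyn Lunel (see the reference in Lemma \ref{lem:collec_ext_res}): the function $\Th(\cdot)\Ind$ plays the role of the fundamental solution, so that $s\mapsto(\Th(t-u)\Ind)(s)$ is the Green's function of the linear part. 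Applied pathwise to $h(u)=G(\pj_u X(\om))$, continuity of $u\mapsto G(\pj_u X)$ lets us handle the drift term for each $\om$ separately.

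Next I would handle the stochastic term by mollification. Let $W^n$ be a sequence of $C^1$ processes obtained by convolving $W$ with a smooth mollifier (or by piecewise-linear interpolation on a mesh of size $1/n$), and let $X^n$ solve the SDDE with $\sigma\,dW$ replaced by $\sigma\dot{W}^n\,dt$. Pathwise, $X^n$ is the solution of a deterministic inhomogeneous DDE, so by the previous step
\begin{equation*}
\pj_t X^n=\Th(t)\xi+\int_0^t \Th(t-u)\Ind\,G(\pj_u X^n)\,du+\int_0^t \Th(t-u)\Ind\,\sigma\dot{W}^n(u)\,du.
\end{equation*}
Using the Lipschitz bound on $G$, Gronwall and standard moment estimates for SDDE give $\expt\|\pj_t X^n-\pj_t X\|^2\to 0$, which handles the LHS and the drift on the RHS. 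For the stochastic term, observe that for each fixed $s\in[-r,0]$, the map $u\mapsto(\Th(t-u)\Ind)(s)$ is bounded and measurable on $[0,t]$ (it is a shift of $\Ind$ for $t-u$ small and is bounded by $K$ for larger $t-u$ in view of the exponential decay on $\hat{Q}_\Lambda$), so the Itô integral
\begin{equation*}
\int_0^t(\Th(t-u)\Ind)(s)\,\sigma\,dW(u)
\end{equation*}
is well defined, and an integration-by-parts plus the $L^2$ convergence of $W^n\to W$ on compact intervals yield pointwise convergence of $\int_0^t(\Th(t-u)\Ind)(s)\sigma\dot W^n(u)\,du$ to this Itô integral for each $s$. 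This identifies the stochastic VOC. Finally, for the projection identities, apply $\pih$ and $I-\pih$ to the full VOC: since $\pih$ is bounded linear on $\Ch$ it commutes with the Bochner/stochastic integrals (which are defined pointwise in $s$, so the commutation is immediate), and $\pih\Th(t)=\Th(t)\pih$ by Lemma \ref{lem:collec_ext_res}(4).

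The main technical subtlety is the pointwise-in-$s$ interpretation of $\Th(t-u)\Ind$ and the accompanying stochastic integral: because $\Ind$ is discontinuous, $u\mapsto\Th(t-u)\Ind$ is not continuous as a $\Ch$-valued curve near $u=t$, so one cannot view $\int_0^t\Th(t-u)\Ind\,\sigma\,dW(u)$ as a genuine $\Ch$-valued Itô integral without extra care. The statement circumvents this by \emph{defining} the integral sectionwise in $s$; I would adopt the same convention throughout, verify measurability and boundedness of the integrand for each $s$, and justify the passage to the limit from the mollified approximation by a dominated-convergence/Itô-isometry argument at each $s$ separately before invoking Fubini to recognise the limit as the claimed object in $\Ch$.
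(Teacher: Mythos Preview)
Your approach is correct but differs from the paper's. The paper (following Theorem~4.1 of \cite{SEAM_book}) works in the opposite direction: it takes the VOC formula as the \emph{definition} of a candidate process, namely
\[
y(t)=(\Th(t)\xi)(0)+\int_0^t(\Th(t-u)\Ind)(0)\,G(\pj_u y)\,du+\int_0^t(\Th(t-u)\Ind)(0)\,\sigma\,dW(u),
\]
shows by Picard iteration that this fixed-point equation has a continuous solution, and then verifies directly that any such $y$ satisfies the original SDDE \eqref{eq:main_in_intgl_form_for_VOC}; uniqueness of the SDDE solution then forces $y=X$. Your route instead starts from the SDDE solution $X$, reduces to the classical deterministic VOC by mollifying the noise, and passes to the limit. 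Both are valid. The paper's argument is shorter and sidesteps the delicate limit you flag in your last paragraph (the jump of $u\mapsto(\Th(t-u)\Ind)(s)$ at $u=t+s$ and the resulting care needed in the integration-by-parts), since it never approximates the stochastic integral at all. Your argument has the merit of making the link to the deterministic theory explicit and, because the noise is additive, the Wong--Zakai step is genuinely straightforward (no correction term, Gronwall gives $\sup_{[0,T]}|X^n-X|\le C\sup_{[0,T]}|W^n-W|$); the only real work is the sectionwise convergence of the convolution with $\dot W^n$, which you correctly identify as needing a BV/Stieltjes argument for each fixed $s$. The projection step is handled identically in both approaches via Lemma~\ref{lem:collec_ext_res}(4).
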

\begin{proof}
Similar to proof of theorem 4.1 on p.201 of \cite{SEAM_book}. It can be proved by Picard iteration that the equation
\begin{align}
y(t)=\begin{cases}(\Th(t)\xi)(0)+\int_0^t (\Th(t-u)\Ind)(0) G(\pj_u y)du +\int_0^t (\Th(t-u)\Ind)(0) \sigma dW(u),\\
 \hfill t\geq 0, \\ \xi(t), \qquad t\in[-r,0]\end{cases}
\end{align}
posseses a solution with continuous sample paths. Using arguments similar to the ones in p.201-202 of \cite{SEAM_book}, it can be shown that every such solution satisfies the FDE \eqref{eq:main_in_intgl_form_for_VOC}. And it is already known that solution to \eqref{eq:main_in_intgl_form_for_VOC} is unique. The formulas for the projections follow from the invariance of splitting $\Ch=P_{\Lambda}\oplus \hat{Q}_{\Lambda}$ under the semigroup $\Th$. 
\end{proof}


\section{The perturbed system}\label{sec:perturbedsys}
When the noise perturbation $\eps$ is small, the quantity $\hprc(t):=\frac12\la \pj_tX,\Psi \ra^*\la \pj_t X,\Psi \ra$ evolves slowly compared to $X$ (see remark \ref{rmk:sayingHevolvesslow}). Significant changes in $\hprc$ occur on time scales of order $1/\eps^2$. Hence we consider the perturbed system \eqref{eq:main_in_intgl_form} with a change of time scale: $\hat{X}(t):=X(t/\eps^2)$. For this purpose, define the segment extractor 
$$(\hat{\pj}^{\eps}_t f)\,(\theta)=f(t+\eps^2\theta), \qquad \quad -r\leq \theta \leq 0.$$
Then $\hat{X}(t)$ satisfies
\begin{align}\label{eq:main_in_intgl_form_timesclaechange}
\hat{X}(t)=\begin{cases} \icond(0)+\frac{1}{\eps^2}\int_0^tL_0(\hat{\pj}^{\eps}_u \hat{X})du+ \int_0^t G(\hat{\pj}^{\eps}_u \hat{X})du +\int_0^t F(\hat{\pj}^{\eps}_u \hat{X})d\hat{W}(u), \qquad t\geq 0,  \\ \icond(\eps^{-2} t), \qquad \qquad  \qquad \qquad  \qquad \qquad  \qquad \qquad \qquad -r\eps^2 \leq t\leq 0,\end{cases} 
\end{align}
where $\hat{W}(t)=\eps W(t/\eps^2)$.

We drop the hats on symbols and rewrite the notation:

Let $(\Om,\F,\mbbP)$ be a complete probability space and $W=\{W(t)\}_{0 \leq t \leq T}$ be a real valued Wiener process defined on it. Suppose $\{\F_t\}_{0\leq t\leq T}$ is a family of increasing $\mbbP$-complete sub-$\sigma$-fields of $\F$ such that 
$$\F_t^W \subset \F_t \text{ and } \F_t \perp \sigma\{W(v)-W(u),\,\,t\leq u\leq v\leq T\}.$$
We take the initial condition $\icond \in \C$ to be deterministic.

Our object of study is an $\R$-valued random process $\Xeps(t)$ satisfying
\begin{align}\label{eq:main_in_intgl_form_timesclaechange_drophat}
\Xeps(t)=\begin{cases} \icond(0)+\frac{1}{\eps^2}\int_0^tL_0(\hat{\pj}^{\eps}_u \Xeps)du+\int_0^t G(\hat{\pj}^{\eps}_u \Xeps)du+\int_0^t F(\hat{\pj}^{\eps}_u \Xeps)dW(u), \,\, t\geq 0,  \\ \icond(\eps^{-2} t), \qquad \qquad  \qquad \qquad  \qquad \qquad  \qquad \qquad \qquad -r\eps^2 \leq t\leq 0.\end{cases} 
\end{align}
We assume $G:\C\to \R$ is a $C^2$ function satisfying the Lipshitz condition \eqref{eq:FLipcond} and growth condition \eqref{eq:FGgrowthforunique}; $L_0$ is a continuous linear mapping that satisfies assumption \ref{ass:assumptondetsys}; and $F(\eta)=\sigma$ for all $\eta \in \C$.

Our aim is to study the weak convergence, as $\eps \to 0$, of the law of the scalar process $\hprc^{\eps}(t):=\ham(\hat{\pj}^{\eps}_t \Xeps)$ where\footnote{Note that $\Psi$ has two components. The symbol * indicates transpose of a vector.} $\ham(\eta):=\frac12\la \eta,\Psi \ra^*\la \eta,\Psi \ra$.

Note that $\ham$ solely depends on the $P_{\Lambda}$ projection. 
Proposition \ref{add:newprop:supboundGron} gives the justification to ignore the $Q_{\Lambda}$ projection. However proposition \ref{prop:stablenormtozero} about $Q_{\Lambda}$ projection is what we use in proving weak convergence of $\hprc^{\eps}(t)$.

\begin{defn}\label{def:stabsol}
Define $\stabsol:\R\to \R$, by
\begin{align*}
\stabsol(t)&=\begin{cases}(\Th(t)(I-\pih)\Ind)(0), \quad  t\geq 0, \\ \stabsol(0),  \quad t<0. \end{cases}
\end{align*}
Using part 5 of lemma \ref{lem:collec_ext_res} we have $|\stabsol(t)|\leq Ke^{-\kappa t}$ for $t\geq 0$. Further 
\begin{align}\label{eq:expdecayforhprime}
\left|\frac{d}{dt}\stabsol(t)\right|=\left|L_0\bigg(\Th(t)(I-\pih)\Ind\bigg)\right|\leq ||L_0||\,||\Th(t)(I-\pih)\Ind||\leq ||L_0||\,Ke^{-\kappa t}.
\end{align}
\end{defn}

Define the modulus of continuity for $f:[0,\infty)\to \R$
$$\modcont(a,b;f)=\sup_{\substack{|u-v|\,\leq\, a \\ u,v\,\in\, [0,b]}}|f(u)-f(v)|.$$

\begin{lem}\label{add:newlem:supbound}
Define $\Upsilon^{\eps}_s:=\left|\left|\int_0^s\Th(\frac{s-u}{\eps^2})(I-\pih)\Ind \sigma dW(u)\right|\right|$. Then there exists constants $\hat{C}$ and $\eps_{(a,\delta)}$ such that, given any $a\in [0,1)$, for $\eps < \eps_{(a,\delta)}$ 
\begin{align}\label{add:newlem:supbound_statement}
\mbbP[\sup_{s\in [0,T]}\Upsilon^{\eps}_s \,\geq \eps^a] \,\,\,\leq\,\,\,\hat{C}\eps^{-a}\sqrt{{r\eps^\delta}\ln\left(\frac{T}{r\eps^\delta}\right)}, \qquad \quad \delta \in (2a,2). 
\end{align}
\end{lem}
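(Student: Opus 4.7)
The plan is to show that $\sup_{s\in[0,T]}\Upsilon^\eps_s$ is almost surely controlled by a constant multiple of the $r\eps^\delta$-modulus of continuity $\modcont(r\eps^\delta,T;W)$ of the driving Wiener process (up to a super-polynomially small error), and then apply Markov's inequality together with the standard Lévy-type estimate $\expt[\modcont(\Delta,T;W)]\leq C\sqrt{\Delta\ln(T/\Delta)}$ to produce the stated bound $\hat C\eps^{-a}\sqrt{r\eps^\delta\ln(T/(r\eps^\delta))}$.

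Set $K(t,\theta):=(\Th(t)(I-\pih)\Ind)(\theta)$, so that the $\theta$-component of the random function inside $\Upsilon^\eps_s$ is $\sigma\int_0^sK((s-u)/\eps^2,\theta)\,dW(u)$. Using $|K(t,\theta)|\leq Ke^{-\kappa t}\,||(I-\pih)\Ind||$ from Lemma \ref{lem:collec_ext_res}(5), I would split the integral at $u=s_-:=(s-r\eps^\delta)_+$. The far-past piece over $[0,s_-]$ is Gaussian with variance at most $C\eps^2 e^{-2\kappa r\eps^{\delta-2}}$; since $\delta<2$ this is super-polynomially small, and a Gaussian tail bound plus a crude union bound on a fine $(s,\theta)$-grid shows the supremum of the far-past piece is $o(\eps^a)$ outside an event of vanishing probability.

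The main step is the local piece on $[s_-,s]$. The obstacle is that $K(\cdot,\theta)$ has a jump discontinuity at $t=-\theta$ (inherited from $\Ind$), which obstructs a direct integration by parts. I would decompose $K(t,\theta)=H(t+\theta)+\tilde K(t,\theta)$, where $H$ is the Heaviside step isolating the jump (of size $x_0(0)=1$, with $x_0$ the fundamental solution of the DDE) and $\tilde K$ is a Lipschitz remainder. The Heaviside contribution is exactly the single Brownian increment $\sigma(W(s+\theta\eps^2)-W(s_-))$, bounded by $2\sigma\,\modcont(r\eps^\delta,T;W)$. For $\tilde K$, Itô integration by parts together with a telescoping cancellation of the $W(s_-)$ boundary terms (which uses the identity $\int_0^{r\eps^{\delta-2}}\partial_t\tilde K(v,\theta)\,dv=\tilde K(r\eps^{\delta-2},\theta)-\tilde K(0,\theta)$) rewrites the local $\tilde K$-integral in terms of $W(s)-W(s_-)$ and of $W(u)-W(s_-)$ for $u\in[s_-,s]$, weighted by $\partial_t\tilde K$. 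It then suffices to verify that the $L^1$-in-$t$ norm of $\partial_t\tilde K(\cdot,\theta)$ is bounded uniformly in $\theta$. This holds because on $\{t+\theta>0\}$ the relation $\partial_t\Th(t)\phi=\gen\Th(t)\phi$ combined with $|L_0\psi|\leq ||L_0||\,||\psi||$ and Lemma \ref{lem:collec_ext_res}(5) yields $|\partial_t\tilde K(t,\theta)|=|\partial_tK(t,\theta)|\leq Ce^{-\kappa t}$, while on $\{t+\theta<0\}$, an interval of length at most $r$, $\tilde K$ is smooth with bounded derivative.

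The main obstacle is this integration-by-parts step in the presence of the $\Ind$-discontinuity; once the decomposition $K=H+\tilde K$ isolates the discontinuity and the uniform $L^1$ bound on $\partial_t\tilde K$ is established, one obtains $\sup_s\Upsilon^\eps_s\leq C\,\modcont(r\eps^\delta,T;W)$ modulo negligible error. Markov's inequality together with the classical Lévy estimate (valid for $r\eps^\delta\ll T$, which holds once $\eps$ is small enough in terms of $a,\delta$) then delivers the claimed bound.
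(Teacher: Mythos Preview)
Your approach is correct in outline and shares the paper's skeleton—split the stochastic convolution into a far-past piece and a local piece, control the local piece by the modulus of continuity of $W$, then apply Markov's inequality together with the Fischer--Nappo estimate $\expt[\modcont(\Delta,T;W)]\leq C\sqrt{\Delta\ln(T/\Delta)}$—but the execution differs in two places.

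For the far-past contribution on $[0,s_-]$, the paper does not use a Gaussian-tail-plus-grid argument. Instead it integrates by parts, pushing the derivative onto $\stabsol$, so that the stochastic integral becomes $\stabsol(r\eps^{\delta-2})\sigma W(t-r\eps^\delta)+\eps^{-2}\int_0^{t-r\eps^\delta}\stabsol'((t-u)/\eps^2)\sigma W(u)\,du$; the supremum over $t$ then reduces \emph{pathwise} to bounds on $\sup_t|W(t)|$ and $\int_0^T|W(u)|\,du$, both of which have finite expectation. Your grid-plus-union-bound sketch is morally fine, but as written it has a gap: a union bound over a discrete grid does not control the continuum supremum in $(s,\theta)$ without a separate continuity estimate (chaining or a Kolmogorov-type bound on increments of the far-past integral), which you have not supplied. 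The paper's integration-by-parts device sidesteps this issue entirely.

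For the local piece, the paper's organisation is different from yours. It first splits the kernel at $u=s+\eps^2\theta$ according to whether the semigroup has reached $\theta$: on $[0,s+\eps^2\theta]$ the kernel is $\stabsol((s+\eps^2\theta-u)/\eps^2)$, and on $[s+\eps^2\theta,s]$ it is $-(\pih\Ind)((s+\eps^2\theta-u)/\eps^2)$. The $\stabsol$-part is then time-split at $s-r\eps^\delta$ and bounded via the fundamental theorem of calculus by $\modcont(r\eps^\delta,T;W)$, while the $\pih\Ind$-part is bounded using the explicit trigonometric form $\pih\Ind=\Phi\Psiz$ by the moduli of continuity of the auxiliary martingales $M^{c,\eps}_t=\int_0^t\cos(u/\eps^2)\,dW(u)$ and $M^{s,\eps}_t=\int_0^t\sin(u/\eps^2)\,dW(u)$, for which Fischer--Nappo again applies. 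Your Heaviside-plus-Lipschitz decomposition $K=H+\tilde K$ is a legitimate and arguably cleaner alternative here, since it bounds everything by $\modcont(r\eps^\delta,T;W)$ alone and avoids introducing $M^{c,\eps},M^{s,\eps}$; the telescoping cancellation you describe is exactly the same centring trick the paper uses on its $\stabsol$-part.
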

\begin{proof}
Let $\stabsol$ be as in definition \ref{def:stabsol}. Then
\begin{align}\label{add:newlem:supbound:eq:2_split1}
\sup_{s\in [0,T]}\Upsilon^{\eps}_s \,\,&\leq\,\,\sup_{s\in [0,T]}\sup_{\theta\in [-r,0]} \left|\int_0^{(s+\eps^2\theta)\vee 0}\stabsol\left(\frac{s+\eps^2\theta -u}{\eps^2}\right)\sigma dW(u)\right| \\ \label{add:newlem:supbound:eq:2_split2}
& \qquad + \sup_{s\in [0,T]}\sup_{\theta\in [-r,0]} \left|\int_{(s+\eps^2\theta)\vee 0}^s\pih \Ind\left(\frac{s+\eps^2\theta -u}{\eps^2}\right)\sigma dW(u)\right|
\end{align}
RHS of \eqref{add:newlem:supbound:eq:2_split1} equals
\begin{align}\label{add:newlem:supbound:eq:2_split1a}
\sup_{t\in [0,T]} \left|\int_0^{t}\stabsol\left(\frac{t -u}{\eps^2}\right)\sigma dW(u)\right| \,\,\leq & \,\,\sup_{t\in [r \eps^\delta,T]} \left|\int_0^{t-r \eps^\delta}\stabsol\left(\frac{t -u}{\eps^2}\right)\sigma dW(u)\right|  \\ \label{add:newlem:supbound:eq:2_split1b}
& \quad + \sup_{t\in [0,T]} \left|\int_{(t-r \eps^\delta)\vee 0}^t\stabsol\left(\frac{t -u}{\eps^2}\right)\sigma dW(u)\right| 
\end{align}
with $\delta \in (0,2)$.

Using integration by parts and exponential decay of $\stabsol$ and $\stabsol'$ in RHS of \eqref{add:newlem:supbound:eq:2_split1a} we have
\begin{align*}
RHS \,\,of\,\, \eqref{add:newlem:supbound:eq:2_split1a} \,\,\,\leq &\,\,\,\sup_{t\in [r \eps^\delta,T]}  |h(r \eps^{\delta-2})\sigma W(t-r \eps^{\delta})|  + \sup_{t\in [r \eps^\delta,T]} \frac{1}{\eps^2} \int_0^{t-r\eps^\delta}\left|\stabsol'\left(\frac{t -u}{\eps^2}\right)\right|\,|\sigma W(u)|\,du \\
\leq & \,\,\, K e^{-\kappa r \eps^{\delta-2}}\sup_{t\in [0,T]}|\sigma W(t)|\,\,+\,\, \sup_{t\in [r \eps^\delta,T]} \frac{1}{\eps^2}||L_0||K\int_0^{t-r\eps^\delta}e^{-\kappa (t-u)/\eps^2}\,|\sigma W(u)|\,du \\
\leq & \,\,\, K e^{-\kappa r \eps^{\delta-2}}\sup_{t\in [0,T]}|\sigma W(t)|\,\,+\,\, \frac{1}{\eps^2}||L_0||Ke^{-\kappa r \eps^{\delta-2}} \int_0^{T}|\sigma W(u)|\,du.
\end{align*}
For RHS of \eqref{add:newlem:supbound:eq:2_split1b}, we use
\begin{align*}
\stabsol\left(\frac{t -u}{\eps^2}\right)=\stabsol\left(\frac{t -((t-r\eps^{\delta})\vee 0)}{\eps^2}\right)-\frac{1}{\eps^2}\int_{(t-r\eps^{\delta})\vee 0}^u \stabsol'\left(\frac{t -\tau}{\eps^2}\right)d\tau, \qquad \text{ for } u\in [(t-r\eps^{\delta})\vee 0,t].
\end{align*}
Now,
\begin{align*}
RHS\,\,of\,\,\eqref{add:newlem:supbound:eq:2_split1b}\,\,&\leq\,\,
\sup_{t\in[0,r\eps^\delta]}|\stabsol(t \eps^{-2})|\,|\sigma W_t| \,+\, \sup_{t\in[r\eps^\delta,T]}|\stabsol(r\eps^{\delta-2})|\,|\sigma(W_t-W_{t-r\eps^\delta})| \\
& \qquad + \sup_{t\in [0,T]}\frac{1}{\eps^2}\left|\int_{(t-r \eps^\delta)\vee 0}^t\left(\int_{(t-r \eps^\delta)\vee 0}^u\stabsol'\left(\frac{t -\tau}{\eps^2}\right)d\tau\right)\sigma dW(u)\right| \\
&\leq 2K\sigma \modcont(r\eps^{\delta},T;W) + \sup_{t\in [0,T]}\frac{1}{\eps^2}\left|\int_{(t-r \eps^\delta)\vee 0}^t\left(\int_{\tau}^t \sigma dW(u)\right) \stabsol'\left(\frac{t -\tau}{\eps^2}\right)d\tau\right| \\
& \leq 2K\sigma \modcont(r\eps^{\delta},T;W) + \sup_{t\in [0,T]}\frac{1}{\eps^2}\int_{(t-r \eps^\delta)\vee 0}^t \sigma|W_t-W_{\tau}| \left|\stabsol'\left(\frac{t -\tau}{\eps^2}\right)\right|d\tau \\
& \leq 2K\sigma \modcont(r\eps^{\delta},T;W) + \sigma \modcont(r\eps^{\delta},T;W) \sup_{t\in [0,T]}\frac{1}{\eps^2}\int_{(t-r \eps^\delta)\vee 0}^t K||L_0||e^{-\kappa(t-\tau)/\eps^2}d\tau \\
& \leq 2K\sigma \left(1+\frac{||L_0||}{2\kappa}\right)\modcont(r\eps^\delta,T;W).
\end{align*}

For the term in \eqref{add:newlem:supbound:eq:2_split2} we make use of the following facts: 
\begin{align*}
\pih\Ind(v-u)&=\Psiz_1 \cos(v-u)+\Psiz_2\sin(v-u) \\
&=(\Psiz_1\cos v+\Psiz_2\sin v)\cos u + (\Psiz_1\sin v-\Psiz_2\cos v)\sin u, 
\end{align*}
and $|\Psiz_1\cos v+\Psiz_2\sin v|\leq \sqrt{\Psiz_1^2+\Psiz_2^2}=\sqrt{\Psiz^*\Psiz}$. Using these it is easy to see that the term in \eqref{add:newlem:supbound:eq:2_split2} is bounded above by  
\begin{align}\label{add:newlem:supbound:eq:2_split2_bound}
\sigma \sqrt{\Psiz^*\Psiz}\,\sup_{s\in [0,T]}\sup_{\theta\in [-r,0]} \left(\left|M^{c,\eps}_{s}-M^{c,\eps}_{(s+\eps^2\theta)\vee 0}\right| + \left|M^{s,\eps}_{s}-M^{s,\eps}_{(s+\eps^2\theta)\vee 0}\right|\right)
\end{align}
where 
\begin{align*}
M^{c,\eps}_t=\int_0^t \cos(u/\eps^2)dW(u), \qquad M^{s,\eps}_t=\int_0^t \sin(u/\eps^2)dW(u).
\end{align*}
The term in \eqref{add:newlem:supbound:eq:2_split2_bound} can be bounded above by
\begin{align}\label{add:newlem:supbound:eq:2_split2_bound2}
\sigma \sqrt{\Psiz^*\Psiz} \,\left( \,\modcont(\eps^2r,T;M^{c,\eps}) \,+\, \modcont(\eps^2r,T;M^{s,\eps}) \,\right).
\end{align}
To compute $\mbbP[\sup_{s\in [0,T]}\Upsilon^{\eps}_s \,\geq \eps^a]$ we use the identity for nonnegative random variables $X_i$
$$\mbbP[\sum_{j=1}^5X_j \geq \eps^a] \,\,\,\leq\,\,\, \sum_{j=1}^5\mbbP[X_j \geq \frac15\eps^a] \,\,\,\leq\,\,\, 5\eps^{-a}\sum_{j=1}^5\expt[X_j].$$
Now, $\expt[\sup_{t\in [0,T]}|W(t)|]\leq C_{bdg}\sqrt{T}$ by Burkholder-Davis-Gundy inequality (theorem 3.3.28 in \cite{KaratzasShreeve}). Also
$$\expt \int_0^T|\sigma W(u)|du\,\, \leq\,\, T\expt\sup_{t\in[0,T]}|\sigma W(u)|\,\,\leq\,\, \sigma C_{bdg}T^{3/2}.$$
Using lemma 3 of \cite{FischerNappo}, $\expt \,\modcont(r \eps^\delta,T;W) \leq C_{\modcont}\sqrt{{r\eps^\delta}\ln\left(\frac{T}{r\eps^\delta}\right)}$. Using Theorem 1 of \cite{FischerNappo} there exists constants $C_{\modcont}^c$ and $C_{\modcont}^s$ such that $\expt \,\modcont( \eps^2r,T;M^{c,\eps}) \leq C_{\modcont}^c\sqrt{{\eps^2r}\ln\left(\frac{T}{\eps^2r}\right)}$ and $\expt \,\modcont( \eps^2r,T;M^{s,\eps}) \leq C_{\modcont}^s\sqrt{{\eps^2r}\ln\left(\frac{T}{\eps^2r}\right)}$. Hence
\begin{align*}
\mbbP[\sup_{s\in [0,T]}\Upsilon^{\eps}_s \,\geq \eps^a] \,\,\,\leq &\,\,\,C_1\eps^{-a}e^{-\kappa r \eps^{\delta-2}}\,+\,C_2\eps^{-a-2}e^{-\kappa r \eps^{\delta-2}} \\
& \quad + C_3\eps^{-a}\sqrt{{r\eps^\delta}\ln\left(\frac{T}{r\eps^\delta}\right)} \,\,+\,\,C_4\,\eps^{-a}\sqrt{{\eps^2r}\ln\left(\frac{T}{\eps^2r}\right)}
\end{align*}
where $C_{1}=5\sigma K C_{bdg}\sqrt{T}$, $C_2=5\sigma ||L_0||KC_{bdg}T^{3/2}$, $C_3=10\sigma K\left(1+\frac{||L_0||}{2\kappa}\right)C_{\modcont}$, $C_4=5\sigma \sqrt{\Psiz^*\Psiz}(C_{\modcont}^c+C_{\modcont}^s)$. For small enough $\eps$, the term with $C_3$ dominates and we have
\begin{align*}
\mbbP[\sup_{s\in [0,T]}\Upsilon^{\eps}_s \,\geq \eps^a] \,\,\,\leq &\,\,\, 2C_3\eps^{-a}\sqrt{{r\eps^\delta}\ln\left(\frac{T}{r\eps^\delta}\right)}.
\end{align*}
Given $a\in (0,1)$ if we choose $\delta \in (2a,2)$ the RHS of the above equation goes to zero as $\eps \to 0$.
\end{proof}

Fix a $H^*\in \R^+$ and let $$\Ssp:=\{\eta \in P_{\Lambda}\,:\,\ham(\eta)<H^*\}.$$
Assume that the initial condition $\hat{\pj}_0\Xeps$ is such that $\pi \hat{\pj}_0\Xeps \in \Ssp$. Define the stopping time
$$\stopt := \inf \{t\geq 0 \,:\, \pi\hat{\pj}^{\eps}_t\Xeps\not\in \Ssp \}.$$

\begin{prop}\label{add:newprop:supboundGron}
Define $\beta^{\eps}_s:=||(I-\pi)\hat{\pj}^{\eps}_s \Xeps||$. Then there exists constants $C,\hat{C}$ and $\hat{\eps}_{(a,\delta)}$ such that, given any $a\in [0,1)$,  for $\eps < \hat{\eps}_{(a,\delta)}$ 
\begin{align}\label{add:newprop:supboundGron_statement}
\mbbP\left[\sup_{s\in [0,T\wedge \stopt]}\left(\beta^{\eps}_s -\beta^{\eps}_0(1+\frac12{C}^2s^2)e^{-\kappa s/\eps^2}\right)\,\,\geq \,\,  2\eps^a \right] \,\,\,\leq\,\,\,\hat{C}\eps^{-a}\sqrt{{r\eps^\delta}\ln\left(\frac{T}{r\eps^\delta}\right)}, \qquad \quad \delta \in (2a,2). 
\end{align}
\end{prop}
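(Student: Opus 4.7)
The plan is to derive a Volterra-type integral inequality for $\beta^{\eps}_s$ from the variation of constants formula, iterate it against its own kernel, and control the stochastic contribution via Lemma \ref{add:newlem:supbound}. First I apply Proposition \ref{prop:vocformula} to the rescaled process $\Xeps$ and project via $(I-\pih)$. By the $\Th$-invariance of the splitting $\Ch = P_{\Lambda} \oplus \hat{Q}_{\Lambda}$ from Lemma \ref{lem:collec_ext_res}(4),
\begin{align*}
(I-\pih)\hat{\pj}^{\eps}_t \Xeps \,&=\, \Th(t/\eps^2)(I-\pih)\icond + \int_0^t \Th\!\bigl(\tfrac{t-s}{\eps^2}\bigr)(I-\pih)\Ind\, G(\hat{\pj}^{\eps}_s \Xeps)\,ds \\
&\quad + \int_0^t \Th\!\bigl(\tfrac{t-s}{\eps^2}\bigr)(I-\pih)\Ind\, \sigma\,dW(s).
\end{align*}
Taking sup-norms and invoking the exponential decay from Lemma \ref{lem:collec_ext_res}(5), the first term is bounded by $K\beta^{\eps}_0 e^{-\kappa t/\eps^2}$. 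Inside $[0,\stopt]$ we have $||\pi \hat{\pj}^{\eps}_s \Xeps|| \leq M_1(H^*)$, hence $||\hat{\pj}^{\eps}_s \Xeps|| \leq M_1 + \beta^{\eps}_s$; combined with the Lipschitz and growth bounds \eqref{eq:FLipcond}--\eqref{eq:FGgrowthforunique} this yields $|G(\hat{\pj}^{\eps}_s \Xeps)| \leq C_0 + K_G \beta^{\eps}_s$ for some constant $C_0$. The third term is exactly $\Upsilon^{\eps}_t$ from Lemma \ref{add:newlem:supbound}.

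Combining these estimates, inside $[0, T\wedge\stopt]$ we obtain the pathwise Volterra-type inequality
\[\beta^{\eps}_t \,\leq\, K\beta^{\eps}_0 e^{-\kappa t/\eps^2} \,+\, C\!\int_0^t\! e^{-\kappa(t-s)/\eps^2} \beta^{\eps}_s\,ds \,+\, \Upsilon^{\eps}_t \,+\, O(\eps^2),\]
with $C := K K_G$. Iterating against its own kernel uses the self-convolutions $k^{*n}(u) = C^n u^{n-1} e^{-\kappa u/\eps^2}/(n-1)!$; the action on the initial-condition piece produces the truncated series $\beta^{\eps}_0(1 + Cs + \tfrac12 C^2 s^2 + \cdots) e^{-\kappa s/\eps^2}$ (with the prefactor $K$ absorbed via an equivalent norm on $\hat{Q}_{\Lambda}$ in which the semigroup is contractive). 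Since $\beta^{\eps}_{\cdot}$ is uniformly bounded on $[0,\stopt]$, the tail of the iteration is controlled by the $L^1$-mass of $k^{*k}$, which is $O(\eps^{2k})$ and may be discarded. The linear-in-$s$ term satisfies $Cs\,e^{-\kappa s/\eps^2} \leq C\eps^2/(\kappa e)$, which together with the $O(\eps^2)$ drift remainder absorbs into an additive slack of $\eps^a$ once $\eps$ is small enough (using $a < 1 < 2$). Hence $\beta^{\eps}_s \leq \beta^{\eps}_0(1 + \tfrac12 C^2 s^2) e^{-\kappa s/\eps^2} + 2\eps^a$ uniformly in $s \in [0, T\wedge\stopt]$ on the event $\{\sup_s \Upsilon^{\eps}_s < \eps^a\}$; Lemma \ref{add:newlem:supbound} bounds the probability of the complement, yielding the stated estimate.

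The principal obstacle is the bookkeeping inside the iteration: one must derive a pathwise inequality holding uniformly in $s$ so that the supremum makes sense, and then carefully separate the retained polynomial factor $\tfrac12 C^2 s^2$ from the many sub-leading contributions that have to fit inside the additive $\eps^a$ slack. The absorption works because the multiplier $e^{-\kappa s/\eps^2}$ suppresses any polynomial factor of $s$ to $O(\eps^{2p})$ uniformly on $[0,T]$, and each further Volterra iterate gains an extra $\eps^2$ from the $L^1$-mass of the kernel, so only the first two iterates need to be kept explicit in the final bound.
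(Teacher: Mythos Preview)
Your approach is essentially the same as the paper's: variation of constants on $(I-\pih)\hat{\pj}^{\eps}_t\Xeps$, exponential decay of $\Th$ on $\hat Q_\Lambda$, Lipschitz/growth control of $G$ inside $[0,\stopt]$, and Lemma~\ref{add:newlem:supbound} for the stochastic convolution. The only structural difference is the last step: the paper does not iterate the Volterra kernel by hand but simply invokes a ready-made Gronwall-type inequality (Theorem~1.5 in Bainov--Simeonov) to pass directly from
\[
\beta^{\eps}_s \le K\beta^{\eps}_0 e^{-\kappa s/\eps^2}+C\eps^2+C\int_0^s e^{-\kappa(s-u)/\eps^2}\beta^{\eps}_u\,du+\eps^a
\]
to the stated bound $\beta^{\eps}_s\le\beta^{\eps}_0(1+\tfrac12 C^2 s^2)e^{-\kappa s/\eps^2}+(1+\eps^2C^2/\kappa^2)(C\eps^2+\eps^a)$, and then shrinks $\eps$ so that the last bracket is $\le 2\eps^a$.

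Your hand-iteration is a legitimate substitute, but a couple of the bookkeeping moves deserve tightening. The ``equivalent norm'' device to absorb the prefactor $K$ changes $\beta^{\eps}_0$ as well, so it does not by itself reproduce the statement; the paper instead lets the generic constant $C$ absorb $K$. The claim that $\beta^{\eps}_\cdot$ is uniformly bounded on $[0,\stopt]$ (needed to discard the iteration tail) should be justified first---on the event $\{\sup_s\Upsilon^{\eps}_s<\eps^a\}$ it follows in one line from the integral inequality itself, since the kernel has $L^1$-mass $C\eps^2/\kappa<1$ for small $\eps$. Finally, note that the specific polynomial $1+\tfrac12 C^2 s^2$ is just what Bainov's lemma outputs; since any polynomial in $s$ times $e^{-\kappa s/\eps^2}$ is $O(\eps^{2p})$ uniformly on $[0,T]$, there is no need to match it term by term---your argument already gives a stronger bound once the loose ends above are closed.
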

\begin{proof}
Using the variation of constants formula, we have
\begin{align}\label{eq:vocnonlinstable}
||(I-\pi)\hat{\pj}^{\eps}_s \Xeps||\,\,\,\leq &\,\,\,||\Th(s/\eps^2)(I-\pi)\hat{\pj}^{\eps}_0 \Xeps|| \\ \notag
& +\,||\int_0^s\Th(\frac{s-u}{\eps^2})(I-\pih)\Ind G(\hat{\pj}^{\eps}_u \Xeps)du ||  \\ \notag
& +\,||\int_0^s\Th(\frac{s-u}{\eps^2})(I-\pih)\Ind \sigma dW(u)||.
\end{align}
For the first term on the RHS of \eqref{eq:vocnonlinstable}, using part 5 of lemma \ref{lem:collec_ext_res}, we have,
\begin{align}\label{eq:vocnonlinstable_esti1}
||\Th(s/\eps^2)(I-\pi)\hat{\pj}^{\eps}_0 \Xeps|| \,\,\,\leq\,\,\, K||(I-\pi)\hat{\pj}^{\eps}_0 \Xeps||e^{-\kappa s/\eps^2}=K\beta^\eps_0e^{-\kappa s/\eps^2}.
\end{align}
For the second term on the RHS of \eqref{eq:vocnonlinstable}, using Lipshitz condition on $G$ and that $\pi \hat{\pj}^{\eps}_u\Xeps$ is bounded for $u\in [0,T\wedge \stopt]$, we have,
\begin{align}\label{eq:vocnonlinstable_esti2}
||\int_0^s & \Th(\frac{s-u}{\eps^2})(I-\pih)\Ind  G(\hat{\pj}^{\eps}_u \Xeps)du || \\ \notag
& \leq \,\,\,\int_0^s||\Th(\frac{s-u}{\eps^2})(I-\pih)\Ind  ||\,|G(\pi\hat{\pj}^{\eps}_u \Xeps)|\,du \\ \notag
& \qquad \quad + K_G\int_0^s||\Th(\frac{s-u}{\eps^2})(I-\pih)\Ind  ||\,||(I-\pi)(\hat{\pj}^{\eps}_u \Xeps)||\,du \\ \notag
& \leq \,\,\, C\int_0^s e^{-\kappa(s-u)/\eps^2}du \quad + \quad C\int_0^s e^{-\kappa(s-u)/\eps^2}\,||(I-\pi)(\hat{\pj}^{\eps}_u \Xeps)||\,du \\ \notag
& \leq \,\,\,C\eps^2\,\,+\,\,C\int_0^s e^{-\kappa(s-u)/\eps^2}\,\beta^\eps_u\,du.
\end{align}
Hence for $s\in [0,T\wedge \stopt]$
\begin{align*}
\beta^\eps_s-\left(K\beta^\eps_0e^{-\kappa s/\eps^2}\,\,+\,\,C\eps^2\,\,+\,\,C\int_0^s e^{-\kappa(s-u)/\eps^2}\,\beta^\eps_u\,du\right)\,\,\leq\,\,||\int_0^s\Th(\frac{s-u}{\eps^2})(I-\pih)\Ind \sigma dW(u)||.
\end{align*}
For the  RHS of the above inequality we use lemma \ref{add:newlem:supbound}. Then we have the following statement: for any $a\in [0,1)$, there exists constants $\hat{C}$ and $\eps_{(a,\delta)}$ such that for $\eps < \eps_{(a,\delta)}$ 
\begin{align}\label{add:newprop:supbound_befGron}
\mbbP\bigg[\forall s\in [0,T\wedge\stopt],\quad \beta^{\eps}_s \,\,\leq\,\,K\beta^\eps_0e^{-\kappa s/\eps^2}\,\,&+\,\,C\eps^2\,\,+\,\,C\int_0^s e^{-\kappa(s-u)/\eps^2}\,\beta^\eps_u\,du+ \eps^a \bigg] \\ \notag
& \geq\,\,\,1-\hat{C}\eps^{-a}\sqrt{{r\eps^\delta}\ln\left(\frac{T}{r\eps^\delta}\right)}, \qquad \quad \delta \in (2a,2). 
\end{align}
Using Gronwall kind of inequality (see Theorem 1.5 on page 7 of \cite{Bainov}) we have that LHS of \eqref{add:newprop:supbound_befGron} is bounded above by
\begin{align}\label{add:newprop:supbound_aftGron}
\mbbP\bigg[\forall s\in [0,T\wedge\stopt],\quad \beta^{\eps}_s \,\,\leq\,\,\beta^{\eps}_0(1+\frac12{C}^2s^2)e^{-\kappa s/\eps^2} + (1+\eps^2{C}^2/\kappa^2)({C}\eps^2+\eps^a) \bigg].
\end{align}
Let $\eps_*$ be such that $\forall \eps<\eps_*$, $(1+\eps^2{C}^2/\kappa^2)({C}\eps^2+\eps^a)<2\eps^a$. Choose $\hat{\eps}_{(a,\delta)}=\eps_{(a,\delta)}\wedge \eps_*$.
\end{proof}



\begin{lem}\label{add:newlem:bound}
Let $\stabsol$ be as in definition \ref{def:stabsol}. Then
\begin{align}\label{add:newlem:lhs}
\left|\left|\int_0^s\Th(\frac{s-u}{\eps^2})(I-\pih)\Ind \sigma dW(u)\right|\right|\,\,\leq\,\,& \\
\label{add:newlem:term1}
 &\left|\int_0^{(s-\eps^2r)\vee 0}\stabsol\left(\frac{s-\eps^2r-u}{\eps^2}\right)\sigma dW(u)\right| \\ \label{add:newlem:term2}
& + \sup_{v\in[(s-\eps^2r)\vee 0,s]}\left|\int_{(s-\eps^2 r)\vee 0}^v \stabsol(0)\sigma dW(u)\right| \\ \label{add:newlem:term3}
& + \sup_{v\in[(s-\eps^2r)\vee 0,s]}\left|\int_{(s-\eps^2 r)\vee 0}^v d\tau \int_0^\tau \frac{1}{\eps^2}\stabsol'\left(\frac{\tau -u}{\eps^2}\right)\sigma dW(u)\right| \\ \label{add:newlem:term4}
& + \sup_{v\in[(s-\eps^2r)\vee 0,s]}\left|\sqrt{\Psiz^*\Psiz}\int_{v}^s \cos(u/\eps^2)\sigma dW(u)\right| \\ \label{add:newlem:term5}
& + \sup_{v\in[(s-\eps^2r)\vee 0,s]}\left|\sqrt{\Psiz^*\Psiz}\int_{v}^s \sin(u/\eps^2)\sigma dW(u)\right|. 
\end{align}
Also,
\begin{align}\label{add:newlem:avgbound}
\expt\left|\left|\int_0^s\Th(\frac{s-u}{\eps^2})(I-\pih)\Ind \sigma dW(u)\right|\right|\,\,\leq\,\,\eps \sigma \left(C_{bdg}\sqrt{r}(2\sqrt{\Psiz^*\Psiz}+|h(0)|)+\frac{K}{\sqrt{2\kappa}}(1+r||L_0||)\right).
\end{align}
\end{lem}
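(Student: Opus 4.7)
The plan is to exploit the explicit piecewise form of the kernel $\Th(\cdot)(I-\pih)\Ind$ and then estimate the resulting five stochastic integrals individually. Since $(I-\pih)\Ind = \Ind - \Phi\Psi(0)$ and $\Ind(t+\theta)=0$ for $t+\theta<0$, one has for $t\geq 0$, $\theta\in[-r,0]$,
$$\big(\Th(t)(I-\pih)\Ind\big)(\theta) = \begin{cases}\stabsol(t+\theta), & t+\theta\geq 0,\\ -\Phi(t+\theta)\Psi(0), & t+\theta<0.\end{cases}$$
Substituting $t = (s-u)/\eps^2$ and splitting the $dW$-integral at $u = (s+\eps^2\theta)\vee 0$ produces, for each fixed $\theta$, a ``stable'' piece equal to $M^h_{s+\eps^2\theta}$, where $M^h_t:=\int_0^{t\vee 0}\stabsol(\tfrac{t-u}{\eps^2})\sigma dW(u)$, plus a ``$P_\Lambda$-piece'' supported on $[(s+\eps^2\theta)\vee 0,s]$. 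As $\theta$ ranges over $[-r,0]$, the parameter $t=s+\eps^2\theta$ ranges over $[s-\eps^2r,s]$, so taking the sup over $\theta$ is bounded by $\sup_{t\in[t_0,s]}|M^h_t|$ plus the sup over $\theta$ of the $P_\Lambda$-piece, where $t_0:=(s-\eps^2r)\vee 0$.

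To extract Terms 1--3, I would apply the identity $\stabsol(\tfrac{t-u}{\eps^2}) = \stabsol(0) + \int_u^t \tfrac{1}{\eps^2}\stabsol'(\tfrac{\tau-u}{\eps^2})d\tau$ (valid for $u\leq t$) inside $M^h_t$ and invoke a stochastic Fubini, justified by the uniform bound $|\stabsol'|\leq\|L_0\|Ke^{-\kappa\cdot}$ from \eqref{eq:expdecayforhprime}. This yields, for $t\geq t_0$,
$$M^h_t = M^h_{t_0} + \stabsol(0)\,\sigma\,(W_t-W_{t_0}) + \int_{t_0}^t\!\Bigl(\int_0^\tau \tfrac{1}{\eps^2}\stabsol'\bigl(\tfrac{\tau-u}{\eps^2}\bigr)\sigma dW(u)\Bigr)d\tau,$$
and the triangle inequality followed by $\sup_{t\in[t_0,s]}$ produces exactly Terms 1, 2, 3 (with $M^h_{t_0}$ being Term 1). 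For the $P_\Lambda$-piece, I would reuse the trigonometric expansion from the proof of Lemma \ref{add:newlem:supbound}: writing $\Phi(\tfrac{s-u}{\eps^2}+\theta)\Psi(0) = \Psiz_1\cos\bigl(\omega_c(\tfrac{s-u}{\eps^2}+\theta)\bigr) + \Psiz_2\sin\bigl(\omega_c(\tfrac{s-u}{\eps^2}+\theta)\bigr)$ and applying angle addition separates the $u$-dependence into $\cos(\omega_c u/\eps^2)$ and $\sin(\omega_c u/\eps^2)$, with each $\theta$-dependent coefficient bounded by $\sqrt{\Psiz^*\Psiz}$. Since the lower limit $(s+\eps^2\theta)\vee 0$ ranges over $[t_0,s]$ as $\theta$ varies, this produces Terms 4 and 5.

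For \eqref{add:newlem:avgbound}, each of the five terms is estimated by a standard tool. Term 1 by It\^o isometry together with $|\stabsol|\leq Ke^{-\kappa\cdot}$ gives $\sigma\eps K/\sqrt{2\kappa}$; Term 2 by Doob/BDG on a Brownian increment of length at most $\eps^2 r$ gives $C_{bdg}|\stabsol(0)|\sigma\eps\sqrt{r}$; Term 3 by moving $\int_{t_0}^s$ outside, applying Cauchy--Schwarz pointwise in $\tau$, and using the $L^2$-tail bound on $\stabsol'$ gives $\sigma\eps r\|L_0\|K/\sqrt{2\kappa}$; Terms 4 and 5 are each of the form $\sqrt{\Psiz^*\Psiz}\sup_{v\in[t_0,s]}|M^c_s - M^c_v|$ for an oscillatory martingale $M^c$, which I would bound using a time-reversal $\tilde W_u := W_s-W_{s-u}$ on $[0,s-t_0]$ to recast the supremum as the usual Doob maximum $\sup_\tau |\widetilde{M}_\tau|$, giving $C_{bdg}\sqrt{\Psiz^*\Psiz}\sigma\eps\sqrt{r}$ apiece. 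Summing reproduces the stated constant $C_{bdg}\sqrt{r}(2\sqrt{\Psiz^*\Psiz}+|h(0)|) + K(1+r\|L_0\|)/\sqrt{2\kappa}$. The main obstacle is simply the bookkeeping: verifying the piecewise formula for the kernel, justifying the stochastic Fubini that converts a sample-path Taylor identity into Term 3, and handling the ``backward'' supremum in Terms 4--5 so that one gets the $C_{bdg}$ constant rather than $1+C_{bdg}$.
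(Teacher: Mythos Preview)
Your proposal is correct and follows essentially the same route as the paper. The paper splits the kernel at $u=(s+\eps^2\theta)$ into the same stable piece and $P_\Lambda$-piece, then uses the identity
\[
\stabsol\!\left(\tfrac{v-u}{\eps^2}\right)=\stabsol\!\left(\tfrac{t_0-u}{\eps^2}\right)\mathbf{1}_{\{u<t_0\}}+\stabsol(0)\mathbf{1}_{\{u\ge t_0\}}+\int_{t_0\vee u}^{v}\tfrac{1}{\eps^2}\stabsol'\!\left(\tfrac{\tau-u}{\eps^2}\right)d\tau
\]
followed by Fubini, which is equivalent to your ``write $M^h_t=h(0)\sigma W_t+\int_0^tN_\tau\,d\tau$ and subtract at $t_0$''. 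For \eqref{add:newlem:avgbound} the paper applies exactly the same four tools you list (It\^o isometry on Term~1, BDG on Term~2, pull the $d\tau$ integral outside and Cauchy--Schwarz on Term~3, BDG on Terms~4--5); it simply asserts the BDG bound for the backward suprema without the time-reversal justification you supply, so your treatment there is in fact slightly more careful than the paper's.
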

\begin{proof}
Assume $s>\eps^2 r$. LHS of \eqref{add:newlem:lhs} is bounded above by
\begin{align}\label{add:newlem:prfpart:term1}
\sup_{\theta \in [-r,0]}&\left|\int_0^{(s+\eps^2\theta)}\stabsol\left(\frac{s+\eps^2\theta -u}{\eps^2}\right)\sigma dW(u)\right| \\ \label{add:newlem:prfpart:term2}
&+ \sup_{\theta \in [-r,0]}\left|\int_{(s+\eps^2\theta)}^s\pih \Ind\left(\frac{s+\eps^2\theta -u}{\eps^2}\right)\sigma dW(u)\right|.
\end{align}

The term in \eqref{add:newlem:prfpart:term1} can be rewritten as
\begin{align}\label{add:newlem:prfpart:term1_rewrite}
\sup_{v \in [s-\eps^2r,s]}&\left|\int_0^{v}\stabsol\left(\frac{v -u}{\eps^2}\right)\sigma dW(u)\right|.
\end{align}
For $v\in [s-\eps^2r,s]$, we have
\begin{align}\label{add:newlem:prfpart:useful1}
\stabsol\left(\frac{v -u}{\eps^2}\right)=\stabsol\left(\frac{s-\eps^2r-u}{\eps^2}\right)\mathbf{1}_{\{u<s-\eps^2r\}}+\stabsol(0)\mathbf{1}_{\{u\geq s-\eps^2r\}}+\int_{(s-\eps^2r)\vee u}^v\frac{1}{\eps^2}\stabsol'\left(\frac{\tau -u}{\eps^2}\right)d\tau.
\end{align}
Using the above in \eqref{add:newlem:prfpart:term1_rewrite} and then changing the order of integration for the term involving $\stabsol'$ we have that \eqref{add:newlem:prfpart:term1} is bounded above by RHS of \eqref{add:newlem:term1} $+$ \eqref{add:newlem:term2} $+$ \eqref{add:newlem:term3}.

For the term in \eqref{add:newlem:prfpart:term2} we make use of the following facts: 
\begin{align*}
\pih\Ind(v-u)&=\Psiz_1 \cos(v-u)+\Psiz_2\sin(v-u) \\
&=(\Psiz_1\cos v+\Psiz_2\sin v)\cos u + (\Psiz_1\sin v-\Psiz_2\cos v)\sin u, 
\end{align*}
and $|\Psiz_1\cos v+\Psiz_2\sin v|\leq \sqrt{\Psiz_1^2+\Psiz_2^2}=\sqrt{\Psiz^*\Psiz}$. Using these it is easy to see that the term in \eqref{add:newlem:prfpart:term2} is bounded above by  \eqref{add:newlem:term4} $+$ \eqref{add:newlem:term5}.

Same method can be employed for $s\leq \eps^2r$.

Now we prove \eqref{add:newlem:avgbound}. Using exponential decay of $|\stabsol|$ we have
\begin{align*}
\expt \eqref{add:newlem:term1} \,\,\leq\,\,\sqrt{\expt \eqref{add:newlem:term1}^2} \,\,&\leq\,\,\sqrt{\int_0^{(s-\eps^2r)\vee 0}\left(\stabsol\left(\frac{s-\eps^2r-u}{\eps^2}\right)\right)^2\sigma^2 du} \\
&\leq\,\,\sqrt{\int_0^{(s-\eps^2r)\vee 0}K^2e^{-2\kappa(s-\eps^2r-u)/\eps^2}\sigma^2 du} \,\,\leq\,\,\eps\sigma \frac{K}{\sqrt{2\kappa}}. 
\end{align*}
 Using exponential decay of $|\stabsol'|$ we have
\begin{align*}
\expt \eqref{add:newlem:term3} \,\,&\leq \,\,\int_{(s-\eps^2r)\vee 0}^s d\tau \,\expt\left|\int_0^\tau \frac{1}{\eps^2}\stabsol'\left(\frac{\tau-u}{\eps^2}\right)\sigma dW(u)\right| \\
&\leq \,\,\int_{(s-\eps^2r)\vee 0}^s d\tau \,\sqrt{\int_0^{\tau}\left(\frac{1}{\eps^2}\stabsol'\left(\frac{\tau-u}{\eps^2}\right)\right)^2\sigma^2 du} \\
&\leq \,\,\int_{(s-\eps^2r)\vee 0}^s d\tau \,\sqrt{\int_0^{\tau}\frac{1}{\eps^4}K^2||L_0||^2e^{-2\kappa(\tau-u)/\eps^2}\sigma^2 du} \,\,\leq\,\, \eps\sigma \frac{K}{\sqrt{2\kappa}}r||L_0||.
\end{align*}
Using Burkholder-Davis-Gundy inequality $\exists\, C_{bdg}$ such that $\expt \eqref{add:newlem:term2} \leq C_{bdg}\sigma |h(0)|\sqrt{\eps^2r}$, $\expt \eqref{add:newlem:term4} \leq C_{bdg}\sigma \sqrt{\Psiz^*\Psiz}\sqrt{\eps^2r}$ and $\expt \eqref{add:newlem:term5} \leq C_{bdg}\sigma \sqrt{\Psiz^*\Psiz}\sqrt{\eps^2r}$. Combining the above results we have \eqref{add:newlem:avgbound}.
\end{proof}

\begin{prop}\label{prop:stablenormtozero}
For any $\nu<1$,
\begin{align}\label{eq:lem:stablenormtozero}
\lim_{\eps \to 0}\eps^{-\nu}\expteps  \int_0^{t\wedge \stopt} ||(I-\pi)\hat{\pj}^{\eps}_s \Xeps||\,ds =0.
\end{align}
\end{prop}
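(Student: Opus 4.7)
The plan is to integrate the variation of constants formula from Proposition \ref{prop:vocformula} applied to $(I-\pi)\hat{\pj}^{\eps}_s\Xeps$ over $s \in [0, t\wedge\stopt]$, take expectations, use Fubini to resolve the convolution-type $G$-term, and apply the $L^1$-in-expectation estimate \eqref{add:newlem:avgbound} from Lemma \ref{add:newlem:bound} for the stochastic term. The upshot will be that the full integral is $O(\eps)$, which beats $\eps^{-\nu}$ for any $\nu < 1$.

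\textbf{Step 1: Pointwise bound via VoC.} Set $\beta^\eps_s := \|(I-\pi)\hat{\pj}^{\eps}_s\Xeps\|$ and $\Xi^\eps_s := \|\int_0^s\Th(\tfrac{s-u}{\eps^2})(I-\pih)\Ind\,\sigma\,dW(u)\|$. Replicating the argument of Proposition \ref{add:newprop:supboundGron} (namely \eqref{eq:vocnonlinstable_esti1} and \eqref{eq:vocnonlinstable_esti2}) one obtains, for $s \in [0, t\wedge\stopt]$,
\begin{equation*}
\beta^\eps_s \;\le\; K\beta^\eps_0\,e^{-\kappa s/\eps^2} \;+\; C\eps^2 \;+\; C\int_0^s e^{-\kappa(s-u)/\eps^2}\,\beta^\eps_u\,du \;+\; \Xi^\eps_s,
\end{equation*}
where the second and third terms use the Lipschitz property of $G$ together with the fact that $\pi\hat{\pj}^\eps_u\Xeps$ is bounded by a constant depending on $H^*$ for $u \le \stopt$.

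\textbf{Step 2: Integrate in $s$ and swap orders.} Integrating the inequality over $s\in[0,t\wedge\stopt]$ gives three harmless contributions and one self-referential term. Specifically, $\int_0^{t\wedge\stopt}e^{-\kappa s/\eps^2}\,ds \le \eps^2/\kappa$, and by Fubini
\begin{equation*}
\int_0^{t\wedge\stopt}\!\int_0^s e^{-\kappa(s-u)/\eps^2}\beta^\eps_u\,du\,ds \;=\; \int_0^{t\wedge\stopt}\beta^\eps_u\int_u^{t\wedge\stopt}e^{-\kappa(s-u)/\eps^2}\,ds\,du \;\le\; \frac{\eps^2}{\kappa}\int_0^{t\wedge\stopt}\beta^\eps_u\,du.
\end{equation*}
So for $\eps$ small enough that $C\eps^2/\kappa < 1/2$, we absorb the self-referential term to obtain
\begin{equation*}
\int_0^{t\wedge\stopt}\beta^\eps_s\,ds \;\le\; 2\Bigl(K\beta^\eps_0\,\tfrac{\eps^2}{\kappa} + C\eps^2\,t + \int_0^{t\wedge\stopt}\Xi^\eps_s\,ds\Bigr).
\end{equation*}

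\textbf{Step 3: Take expectation and use Lemma \ref{add:newlem:bound}.} Bounding $\int_0^{t\wedge\stopt}\Xi^\eps_s\,ds \le \int_0^t\Xi^\eps_s\,ds$, Tonelli and \eqref{add:newlem:avgbound} yield $\expt\int_0^t\Xi^\eps_s\,ds \le \eps\,t\,C_\Xi$ for a constant $C_\Xi$ depending only on $\sigma, K, \kappa, r, \|L_0\|, \Psiz, h(0)$. Combining,
\begin{equation*}
\expt\int_0^{t\wedge\stopt}\beta^\eps_s\,ds \;\le\; C_1\eps^2 + 2\,C_\Xi\,t\,\eps \;=\; O(\eps).
\end{equation*}
Multiplying by $\eps^{-\nu}$ and letting $\eps\to 0$ gives the claim for any $\nu<1$.

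\textbf{Main obstacle.} The only delicate point is the $G$-contribution: naively one would want to invoke a Gronwall-type inequality pointwise in $s$ (as in \eqref{add:newprop:supbound_aftGron}) and then integrate, but since Proposition \ref{add:newprop:supboundGron} delivers only a probabilistic sup-bound of size $\eps^a$ with $a<1$, that route would yield at best $O(\eps^a)$ and could not reach $\nu$ arbitrarily close to $1$. The trick is to integrate in $s$ \emph{first}, use Fubini to collapse the exponential kernel into a factor $\eps^2/\kappa$, and only then absorb to the left-hand side; this replaces the sup-norm estimate of Lemma \ref{add:newlem:supbound} by the sharper first-moment estimate \eqref{add:newlem:avgbound}, which is of size $\eps$ (not $\eps^a$) and is precisely what lets $\nu$ approach $1$.
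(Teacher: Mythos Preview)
Your proof is correct and follows essentially the same route as the paper: bound $\beta^\eps_s$ via the variation-of-constants formula, integrate in $s$, use Fubini to collapse the exponential kernel into a factor $\eps^2/\kappa$, absorb the self-referential term, and control the stochastic piece by the $O(\eps)$ first-moment bound \eqref{add:newlem:avgbound}. The only cosmetic difference is that you absorb pathwise before taking expectation, whereas the paper takes expectation first and then absorbs; both orderings are equivalent here.
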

\begin{proof}
Let $\beta^{\eps}_s:=||(I-\pi)\hat{\pj}^{\eps}_s \Xeps||$. 
Using the variation of constants formula \eqref{eq:vocnonlinstable}, and the estimates \eqref{eq:vocnonlinstable_esti1}, \eqref{eq:vocnonlinstable_esti2}, \eqref{add:newlem:avgbound} we have
\begin{align}\label{prop:stablenormtozero_esti1}
\expteps\int_0^{t\wedge \stopt}\beta^{\eps}_sds \,\,\,\,\leq &\,\,\,\,K\beta^{\eps}_0 \int_0^{t\wedge \stopt} e^{-\kappa s/\eps^2}ds \\ \label{prop:stablenormtozero_esti2} &\qquad +\,\int_0^{t\wedge \stopt}C\eps^2\,ds \,+\, C\expteps\int_0^{t\wedge \stopt}\int_0^s e^{-\kappa(s-u)/\eps^2}\,\beta^{\eps}_u\,du\,ds  \\ \label{prop:stablenormtozero_esti3}
& \qquad + \,\int_0^{t\wedge \stopt}C\eps\,ds.
\end{align}
Evaluating the above integrals and changing the order of integration of second term of \eqref{prop:stablenormtozero_esti2} we have
\begin{align}\notag
\expteps\int_0^{t\wedge \stopt}\beta^{\eps}_sds \,\,\,\,\leq &\,\,\,\,\eps^2\frac{K}{\kappa}\beta^{\eps}_0 + \eps^2Ct +  C \, \int_0^{t\wedge \stopt} du\,\expteps \beta^{\eps}_u \int_u^{t\wedge \stopt}e^{-\kappa(s-u)/\eps^2}ds + \eps Ct \\ \label{eq:propstableesti_use_for_quad}
\leq &\,\,\,\,\eps^2C(1+t) + \eps Ct + C \, \eps^2\int_0^{t\wedge \stopt} du\,\expteps \beta^{\eps}_u 
\end{align}
Hence, for sufficiently small $\eps$,
\begin{align}\label{eq:stablespaceestimate}
\expteps  \int_0^{t\wedge \stopt} ||(I-\pi)\hat{\pj}^{\eps}_s \Xeps||\,ds\,\,\, \leq\,\,\,Ct\eps +C\eps^2.
\end{align} 
And hence, for any $\nu<1$, the statement \ref{eq:lem:stablenormtozero} holds.
\end{proof}

\section{Main result}\label{sec:mainresult}
Recall that our aim is to study the weak convergence, as $\eps \to 0$, of the law of $\hprc^{\eps}(t):=\ham(\hat{\pj}^{\eps}_t \Xeps)$ where $\ham(\eta):=\frac12\la \eta,\Psi \ra^*\la \eta,\Psi \ra$. We use the martingale problem technique and hence, as a starting point, we need a result similar to theorem \ref{thm:qtfmart_general}.

We alter the definition \ref{def:A0_def} to suit the system \eqref{eq:main_in_intgl_form_timesclaechange_drophat}. Let $C_b$ is the Banach space of all bounded continuous functions.
Define an operator $\genonqtf$ on $C_b$ with $\dom(\genonqtf)=\tau_q$ (quasi-tame functions) as follows: Let $\qtf \in \dom(\genonqtf)$ be of the form \eqref{eq:qtf_form}. Then 
\begin{align}\label{eq:A0_def_timescalechange}
(\genonqtf \qtf)(\eta)&:= \frac{1}{\eps^2}(\tgenonqtf \qtf)(\eta)  \,\,+\,\, (\sgenonqtf \qtf)(\eta), \\ 
(\tgenonqtf \qtf)(\eta)&:=\sum_{j=0}^{k-1}\bigg( f_j(\eta(0))g_j(0)-f_j(\eta(-r))g_j(-r)-[f_j,g_j'|\eta] \bigg)\partial_j\qtfr \,\,+\,\, L_0(\eta) \partial_k\qtfr \notag \\ \label{eq:transgendef}
&=\,\,\frac{d}{dt}\bigg|_{t=0}\qtf(T(t)\eta), \\
(\sgenonqtf \qtf)(\eta)&:=G(\eta)\partial_k \qtfr\,+\,\frac12 F^2(\eta)\partial_k^2 \qtfr.
\end{align}
In \eqref{eq:transgendef}, $T(t)$ is the semigroup from section \ref{sec:unpertprob}, and the equality follows from lemma \ref{lem:appliItoformula}.

The function $\ham(\eta)$, which was defined by $\ham(\eta)=\frac12\la \eta,\Psi \ra^*\la \eta,\Psi \ra$, is not a quasi-tame function. Recall the bilinear form \eqref{eq:bilinform}. As a corollary of Riesz representation theorem, there exists a bounded variation (BV) function $\mu:[-r,0]\to \R$ (unique when normalized, see Theorem 1.1.1 in \cite{Diekmanbook}) such that $L_0\eta=\int_{-r}^0 d\mu(\theta)\eta(\theta)$. Hence \eqref{eq:bilinform} can be written as
\begin{align*}
\la \eta,\psi \ra &= \eta(0)\psi(0)-\int_{-r}^0 d\mu(\theta)\int_0^{\theta}\eta(u)\psi(u-\theta)du \\
&=\eta(0)\psi(0)+\int_{-r}^0 \eta(u)\left(\int_{-r}^{u}\psi(u-\theta)d\mu(\theta)\right) du.
\end{align*}
Define $f_1(x)=f_2(x)=x$, $g_i(u)=\int_{-r}^u \Psi_i(u-\theta)d\mu(\theta)$ and $\qtfr(x,y,z)=\frac12 (\Psi_1(0)z+x)(\Psi_2(0)z+y)$. Then $\ham(\eta)=\qtfr([f_1,g_1|\eta],[f_2,g_2|\eta],\eta(0))$. Note that $f_i$ are not bounded and hence $\ham$ is not a quasi-tame function. 

The BV function $\mu$ can have jump discontinuities\footnote{For a BV function \emph{only} jump discontinuities are possible. The set of discontinuous points is at most countable. See theorem 1.2 and appendix 1 of \cite{Diekmanbook}.} and so the functions $g_i$ can fail to be continuous. For example, consider the unperturbed system 
$$\dot{x}\,=\,\alpha x(t-1)\,+\,\beta\int_{-1/2}^0x(t+\theta)d\theta \,+\,\gamma x(t-1/4).$$
For this system $d\mu(\theta)=\alpha \delta_{-1}(\theta)+\beta \mathbf{1}_{[-1/2,0]}(\theta)d\theta + \gamma \delta_{-1/4}(\theta)$ where $\delta_{a}(\cdot)$ is the Dirac-delta at $\theta=a$. Assume $\alpha,\beta,\gamma$ are such that the assumption \ref{ass:assumptondetsys} holds. Evaluating $g_i(u)=\int_{-1}^u \Psi_i(u-\theta)d\mu(\theta)$ we have
$$g_i(u)=\alpha \Psi_i(u+1)\,+\, \mathbf{1}_{\{u\in [-1/2,0]\}}\beta\int_{-1/2}^u\Psi_i(u-\theta)d\theta \,+\,  \mathbf{1}_{\{u\in [-1/4,0]\}}\gamma\Psi_i(u+1/4).$$
From the above it can be easily seen that $g_i$ are piecewise $C^1$ if $\gamma=0$. If $\gamma\neq 0$ they fail to be continuous. However this situation can be alleviated by making alterations to lemma \ref{lem:howqtfevolves} as follows: define $\tilde{g}_i(u)=\mathbf{1}_{\{u\in [-1/4,0]\}} \gamma\Psi_i(u+1/4)$ and then
\begin{align*}
[f,\tilde{g}_i|\pj_tX]&-[f,\tilde{g}_i|\pj_0 X] =\int_0^t\left\{ f(\pj_uX(0))\tilde{g}_i(0)-f(\pj_uX(-1/4))\tilde{g}_i(-1/4)-[f,\tilde{g}'|\pj_uX]\right\}du.
\end{align*}
Hence the jump discontinuities in $\mu$ do not create any problems when interpreted properly.

The lemmas \ref{lem:howqtfevolves} and \ref{lem:appliItoformula} are true even in the case that $\qtf:\C \to \R$ is of the form \eqref{eq:qtf_form} with $f_j \in C(\R;\R)$ and $\qtfr \in C^2(\R^k;\R)$ and $g_j$ continuous and piecewise $C^1$.   Denote the set of such functions $\qtf$ by $\tauqext$. Define an operator $\genonqtfext$ with domain as $\tauqext$ and action same as \eqref{eq:A0_def_timescalechange}.

Now it is clear that $\ham \in \dom({\genonqtfext})$. 

We write $\Psiz$ for $\Psi(0)$, and $\Psiz^*$ is the transpose of $\Psiz$.

\begin{prop}\label{prop:Hmart}
\begin{align*}
(\genonqtfext \, \ham)(\eta)\,=\,(\sgenonqtf  \ham)(\eta)\,=\,G(\eta)\Psiz^*\la \eta,\Psi\ra\,+\,\frac12 F^2(\eta)\Psiz^*\Psiz. 
\end{align*}
The process $M^{\eps}_t$ defined by
\begin{align}
M_t^{\eps}:= \ham(\hat{\pj}^{\eps}_{t\wedge \stopt}\Xeps)- \ham(\hat{\pj}^{\eps}_0\Xeps)-\int_0^{t\wedge \stopt}(\sgenonqtf  \ham)(\hat{\pj}^{\eps}_u \Xeps)du
\end{align}
is a $\F_t$ martingale with quadratic variation given by $\int_0^{t\wedge \stopt} Q(\hat{\pj}^{\eps}_s \Xeps)ds$ where $Q(\eta)=\left(F(\eta) \Psiz^*\la \eta,\Psi\ra\right)^2$.
\end{prop}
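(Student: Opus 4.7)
The plan is to first identify $(\genonqtfext \ham)(\eta)$ by splitting the generator in \eqref{eq:A0_def_timescalechange} into the transport part $\eps^{-2}\tgenonqtf$ and the stochastic part $\sgenonqtf$, then invoke the version of It\^o's formula valid on $\tauqext$ to read off both the compensator and the martingale part of $\ham(\hat{\pj}^\eps_t \Xeps)$.

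\textbf{Vanishing of the transport part.} Using the characterization \eqref{eq:transgendef}, $(\tgenonqtf \qtf)(\eta)=\frac{d}{dt}\big|_{t=0}\qtf(T(t)\eta)$, I would first show that $t\mapsto \ham(T(t)\eta)$ is constant, so that $(\tgenonqtf \ham)(\eta)=0$. This is immediate from \eqref{eq:TPhi_eq_PhieB} and the duality $\la \Phi_i,\psi_j\ra=\delta_{ij}$: for the unperturbed semigroup,
\[
\pi T(t)\eta=T(t)\pi\eta=\Phi e^{Bt}\la\eta,\Psi\ra,\qquad\text{so}\qquad \la T(t)\eta,\Psi\ra = e^{Bt}\la\eta,\Psi\ra,
\]
and since $B$ is skew-symmetric, $e^{B^*t}e^{Bt}=I$; hence $\ham(T(t)\eta)=\frac12\la\eta,\Psi\ra^*\la\eta,\Psi\ra=\ham(\eta)$. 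Consequently $(\genonqtfext \ham)(\eta)=(\sgenonqtf \ham)(\eta)$.

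\textbf{Computing $(\sgenonqtf \ham)(\eta)$.} Write $\ham(\eta)=\qtfr([f_1,g_1|\eta],[f_2,g_2|\eta],\eta(0))$ with $f_i(x)=x$, $g_i(u)=\int_{-r}^{u}\Psi_i(u-\theta)d\mu(\theta)$, and $\qtfr$ the quadratic form for which $\ham(\eta)=\frac12\bigl(\la\eta,\Psi_1\ra^2+\la\eta,\Psi_2\ra^2\bigr)$. Differentiating with respect to the last argument (i.e.\ with respect to $\eta(0)$) gives
\[
\partial_k\qtfr \;=\; \Psi_1(0)\la\eta,\Psi_1\ra+\Psi_2(0)\la\eta,\Psi_2\ra \;=\; \Psiz^*\la\eta,\Psi\ra, \qquad \partial_k^2\qtfr \;=\; \Psi_1(0)^2+\Psi_2(0)^2 \;=\; \Psiz^*\Psiz.
\]
Substituting into $(\sgenonqtf \qtf)(\eta)=G(\eta)\partial_k\qtfr+\frac12 F^2(\eta)\partial_k^2\qtfr$ yields the stated formula.

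\textbf{It\^o decomposition and martingale property.} Apply the extension of Lemma \ref{lem:appliItoformula} to functions in $\tauqext$ along the semimartingale $\hat{\pj}^\eps_t \Xeps$, whose drift and diffusion coefficients are $\eps^{-2}L_0(\cdot)+G(\cdot)$ and $F(\cdot)$. This produces
\[
\ham(\hat{\pj}^\eps_t\Xeps)-\ham(\hat{\pj}^\eps_0\Xeps)\;=\;\int_0^t(\genonqtfext \ham)(\hat{\pj}^\eps_u\Xeps)\,du\;+\;\int_0^t F(\hat{\pj}^\eps_u\Xeps)\,\partial_k\qtfr\,dW(u),
\]
with $\partial_k\qtfr$ evaluated at $\hat{\pj}^\eps_u\Xeps$. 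Using $(\tgenonqtf \ham)=0$ and then stopping at $\stopt$ gives the claimed formula for $M^\eps_t$. On $[0,t\wedge\stopt]$ one has $\ham(\hat{\pj}^\eps_s\Xeps)<H^*$ and thus $|\la \hat{\pj}^\eps_s\Xeps,\Psi\ra|\le\sqrt{2H^*}$, so the integrand $\sigma\,\Psiz^*\la \hat{\pj}^\eps_s\Xeps,\Psi\ra$ is uniformly bounded; standard stopping upgrades the local martingale to a true martingale, and its quadratic variation is $\int_0^{t\wedge\stopt}\bigl(F(\hat{\pj}^\eps_s\Xeps)\,\Psiz^*\la \hat{\pj}^\eps_s\Xeps,\Psi\ra\bigr)^2 ds=\int_0^{t\wedge\stopt}Q(\hat{\pj}^\eps_s\Xeps)\,ds$.

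\textbf{Anticipated obstacle.} The only delicate point is justifying the It\^o formula on $\tauqext$ when $\mu$ has atoms, which may render the functions $g_i$ only piecewise $C^1$ with jumps. Following the worked example preceding the statement, I would decompose $d\mu$ into its atomic and absolutely continuous parts and split each $g_i$ into a finite sum of pieces supported on subintervals where it is continuous and $C^1$, applying the segment-extraction identity of Lemma \ref{lem:howqtfevolves} on each piece and summing; the remainder of the computation is then routine bookkeeping.
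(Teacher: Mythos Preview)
Your proposal is correct and follows essentially the same route as the paper: the paper also argues that $(\tgenonqtf\ham)(\eta)=\frac{d}{dt}\big|_{t=0}\ham(T(t)\eta)=0$ (citing Remark~\ref{rmk:sayingHevolvesslow}), then applies Lemmas~\ref{lem:howqtfevolves} and~\ref{lem:appliItoformula} to obtain $M_t^\eps=\int_0^{t\wedge\stopt}F(\hat{\pj}^\eps_u\Xeps)\,\Psiz^*\la\hat{\pj}^\eps_u\Xeps,\Psi\ra\,dW(u)$ and verifies square-integrability of the integrand via the bound $(\Psiz^*\la\hat{\pj}^\eps_u\Xeps,\Psi\ra)^2\le 2H^*\Psiz^*\Psiz$ on $\{u\le\stopt\}$. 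Your treatment of the piecewise-$C^1$ issue for $g_i$ is exactly the workaround the paper outlines just before the proposition.
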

\begin{proof}
The first result follows from $(\tgenonqtf \ham)(\eta)=\frac{d}{dt}\big|_{t=0}\ham(T(t)\eta)=0$ (see remark \ref{rmk:sayingHevolvesslow}). Application of lemmas \ref{lem:howqtfevolves} and \ref{lem:appliItoformula} shows that $$M_t^{\eps}=\int_0^{t\wedge \stopt}F(\hat{\pj}^{\eps}_u \Xeps) \Psiz^*\la \hat{\pj}^{\eps}_u \Xeps,\Psi\ra dW(u) = \sigma \int_0^{t} \Psiz^*\la \hat{\pj}^{\eps}_u \Xeps,\Psi\ra \,\mathbf{1}_{\{u \leq \stopt \}}\,dW(u).$$
Note that $$\expt \int_0^t \big(\Psiz^*\la \hat{\pj}^{\eps}_u \Xeps,\Psi\ra \,\mathbf{1}_{\{u \leq \stopt \}} \big)^2 du \,\, \leq \,\, \expt \int_0^t 2H^*(\Psiz^*\Psiz)\,du \,\,\leq\,\, 2H^*(\Psiz^*\Psiz)t.$$
Hence (see definition 3.2.9 and proposition 3.2.10 of \cite{KaratzasShreeve}) $M_t^{\eps}$ is a $\F_t$ martingale with quadratic variation $\int_0^{t\wedge \stopt} \left(\sigma \Psiz^*\la \hat{\pj}^{\eps}_u \Xeps,\Psi\ra\right)^2 ds$.
\end{proof}

\begin{prop}\label{prop:hofHmart}
For $\htf$ any $C^2(\R)$ function, $\htf \circ \ham \in \dom(\genonqtfext)$.
\begin{align*}
(\genonqtfext (\htf\circ \ham))(\eta)&=(\sgenonqtf (\htf\circ \ham))(\eta)\\
&=G(\eta)\htf'\big|_{\ham(\eta)}\Psiz^*\la \eta,\Psi\ra\,+\,\frac12 F^2(\eta)\bigg( \htf'\big|_{\ham(\eta)}\Psiz^*\Psiz \,\,+\,\,\htf''\big|_{\ham(\eta)}\left( \Psiz^*\la \eta,\Psi\ra\right)^2 \bigg).
\end{align*}
The process $M^{\eps}_t$ defined by
\begin{align}
M_t^{\htf,\eps}= \htf \circ \ham(\hat{\pj}^{\eps}_{t\wedge \stopt}\Xeps)- \htf \circ \ham(\hat{\pj}^{\eps}_0\Xeps)-\int_0^{t\wedge \stopt}(\sgenonqtf  (\htf \circ \ham))(\hat{\pj}^{\eps}_u \Xeps)du
\end{align}
is a $\F_t$ martingale with quadratic variation given by $\int_0^{t\wedge \stopt} Q(\hat{\pj}^{\eps}_s \Xeps)ds$ where $$Q(\eta)=\left(F(\eta) \htf'\big|_{\ham(\eta)}\Psiz^*\la \eta,\Psi\ra\right)^2.$$
\end{prop}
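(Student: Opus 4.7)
My plan is to reduce the statement to Proposition \ref{prop:Hmart} using the chain rule, since $\htf$ only acts on the scalar $\ham(\eta)$, and then invoke the extended Itô formula of Lemma \ref{lem:appliItoformula} to get the martingale.

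First I would verify that $\htf\circ\ham$ lies in the extended quasi-tame domain $\tauqext=\dom(\genonqtfext)$. Recall that the paper exhibits $\ham(\eta)=\qtfr([f_1,g_1|\eta],[f_2,g_2|\eta],\eta(0))$ with $\qtfr\in C^2(\R^3;\R)$, $f_1(x)=f_2(x)=x\in C(\R;\R)$, and $g_i$ piecewise $C^1$ (after the alteration handling possible jumps of $\mu$ that is described before the definition of $\tauqext$). Setting $\widetilde\qtfr:=\htf\circ\qtfr\in C^2(\R^3;\R)$, I can write
\[
(\htf\circ\ham)(\eta)\,=\,\widetilde\qtfr\bigl([f_1,g_1|\eta],[f_2,g_2|\eta],\eta(0)\bigr),
\]
which is of the form \eqref{eq:qtf_form} with continuous $f_j$, piecewise $C^1$ $g_j$, and $C^2$ outer function; hence $\htf\circ\ham\in\tauqext$.

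Next I would compute $(\genonqtfext(\htf\circ\ham))(\eta)$ by chain rule in both pieces of \eqref{eq:A0_def_timescalechange}. For the transport part, using the intrinsic form \eqref{eq:transgendef} and Proposition \ref{prop:Hmart} (which already gives $(\tgenonqtf\ham)(\eta)=0$, corresponding to conservation of $\ham$ along $T(t)$ as recalled in Remark \ref{rmk:sayingHevolvesslow}),
\[
(\tgenonqtf(\htf\circ\ham))(\eta)\,=\,\frac{d}{dt}\bigg|_{t=0}\htf\bigl(\ham(T(t)\eta)\bigr)\,=\,\htf'(\ham(\eta))\,(\tgenonqtf\ham)(\eta)\,=\,0.
\]
For the stochastic part, differentiating $\widetilde\qtfr=\htf\circ\qtfr$ in the $z$-slot yields $\partial_k\widetilde\qtfr=\htf'(\qtfr)\,\partial_k\qtfr$ and $\partial_k^2\widetilde\qtfr=\htf''(\qtfr)(\partial_k\qtfr)^2+\htf'(\qtfr)\,\partial_k^2\qtfr$. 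Reading off from Proposition \ref{prop:Hmart} the identifications $G(\eta)\partial_k\qtfr+\tfrac12F^2(\eta)\partial_k^2\qtfr=G(\eta)\Psiz^*\la\eta,\Psi\ra+\tfrac12F^2(\eta)\Psiz^*\Psiz$ and $\partial_k\qtfr=\Psiz^*\la\eta,\Psi\ra$ (so $(\partial_k\qtfr)^2=(\Psiz^*\la\eta,\Psi\ra)^2$) and substituting, I arrive at the stated formula for $(\sgenonqtf(\htf\circ\ham))(\eta)=(\genonqtfext(\htf\circ\ham))(\eta)$.

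For the martingale claim, I would apply Lemma \ref{lem:appliItoformula} (in the $\tauqext$-extended form justified in the paper) to $\htf\circ\ham$ along $\Xeps$, obtaining a semimartingale decomposition whose bounded-variation part is $\int_0^t(\genonqtf(\htf\circ\ham))(\hat\pj^\eps_u\Xeps)du$ (note the $1/\eps^2$ transport term drops by the computation above) and whose martingale part equals
\[
\int_0^{t}F(\hat\pj^\eps_u\Xeps)\,\htf'(\ham(\hat\pj^\eps_u\Xeps))\,\Psiz^*\la\hat\pj^\eps_u\Xeps,\Psi\ra\,dW(u).
\]
Stopping at $\stopt$ gives $M_t^{\htf,\eps}$ as the stochastic integral up to $t\wedge\stopt$. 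On $[0,\stopt]$ we have $\ham(\hat\pj^\eps_u\Xeps)<H^*$, so $|\Psiz^*\la\eta,\Psi\ra|\le\sqrt{2H^*\,\Psiz^*\Psiz}$ and $\htf'$ is bounded on the compact set $\{\ham\le H^*\}$; combined with $F\equiv\sigma$ this makes the integrand bounded, so $M_t^{\htf,\eps}$ is a genuine $\F_t$-martingale (as in the proof of Proposition \ref{prop:Hmart}) with quadratic variation $\int_0^{t\wedge\stopt}Q(\hat\pj^\eps_s\Xeps)ds$ by the Itô isometry.

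The main obstacle I anticipate is purely bookkeeping: verifying that the chain-rule manipulations remain valid under the \emph{extended} quasi-tame framework $\tauqext$ (with possibly discontinuous $g_i$ coming from Dirac atoms in $\mu$), and that localization at $\stopt$ upgrades the a priori local martingale to a true martingale; both are routine once the boundedness of the stopped integrand is pinned down, but they need to be stated explicitly.
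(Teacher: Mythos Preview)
Your proposal is correct and follows essentially the same approach as the paper's own proof, which is the one-line remark ``Similar to proof of proposition \ref{prop:Hmart}. Note that $h'$ and $h''$ are bounded on $[0,H^*]$.'' You have simply unpacked that remark: the chain-rule reduction of $(\tgenonqtf(\htf\circ\ham))$ to $\htf'(\ham)\,(\tgenonqtf\ham)=0$, the application of Lemma \ref{lem:appliItoformula} in the extended $\tauqext$ setting, and the boundedness of the stopped integrand via $\htf'$ on $[0,H^*]$ are exactly the ingredients the paper intends.
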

\begin{proof}
Similar to proof of proposition \ref{prop:Hmart}. Note that $h'$ and $h''$ are bounded on $[0,H^*]$.
\end{proof}

We use the semigroup $T(t)$ to generate an equivalence relation on $P_{\Lambda}$, i.e. $$\eta_1 \sim \eta_2, \text{ if }  \exists t\in \R \text{ s.t. } T(t)\eta_1=\eta_2.$$ 
For $\eta \in P_{\Lambda}$, let $$[\eta]=\{\zeta \in P_{\Lambda}\,:\, \zeta \sim \eta\}$$ be the equivalence class of $\eta$ and define $\eqvpj(\eta):=[\eta]$.
Define $\eqvtoH: (\bar{\Ssp}/\sim) \to \R$ by $\eqvtoH([\eta]):=\ham(\eta).$ The image of $\eqvtoH$ is the closed interval $[0,H^*]$.

\vspace{24pt}
Define the averaging operator $\Aavg:C(\bar{\Ssp})\to C(\bar{\Ssp}/\sim)$ 
$$(\Aavg \varphi)([\eta]):=\frac{1}{\tmpr}\int_0^{\tmpr}\varphi(T(s)\eta)ds.$$
where $\tmpr=\tmprval$.

Define $b_{H}:[0,H^*]\to \R$ and $\sigma^2_{H}:[0,H^*]\to \R$ by
$$b_H \circ \eqvtoH = \Aavg \bigg(G(\cdot)E(\cdot)+\frac12 F^2(\cdot)\Psiz^*\Psiz \bigg), \qquad \quad \sigma^2_H \circ \eqvtoH = \Aavg \bigg( F^2(\cdot) E^2(\cdot)  \bigg),$$
where $E(\eta)=\Psiz^*\la \eta,\Psi\ra$.

Now we compute the averaged drift and diffusion coefficients $b_H$ and $\sigma^2_H$. A representative element $\eta \in P_{\Lambda}$ from the equivalence class $[\eta]$ whose $\ham(\eta)$ equals $\hlev$ can be taken as $\eta(\theta)=\sqrt{2\hlev}\cos\omega_c\theta$ for $\theta \in [-r,0]$. Then we have
$$b_H(\hlev)=b_H^{(1)}(\hlev)+b_H^{(2)}(\hlev)$$
where
\begin{align}\label{eq:avgdr1}
b_H^{(1)}(\hlev)&=\frac{1}{\tmpr}\int_0^{\tmpr}\frac12F^2(T(s)\sqrt{2\hlev}\cos\omega_c\cdot)\Psiz^*\Psiz\,ds\,\,=\,\,\frac12\sigma^2\Psiz^*\Psiz \,\,=:\,\, \consa_b,
\end{align}
\begin{align}\label{eq:avgdr2}
b_H^{(2)}(\hlev)&=\frac{1}{\tmpr}\int_0^{\tmpr}G(T(s)\sqrt{2\hlev}\cos\omega_c\cdot)\,\Psiz^*\la T(s)\sqrt{2\hlev}\cos\omega_c\cdot,\Psi\ra \,ds.
\end{align}
\begin{align}\label{eq:avgdiff}
\sigma^2_{H}(\hlev)&=\frac{1}{\tmpr}\int_0^{\tmpr}F^2(\sqrt{2\hlev}T(s)\cos\omega_c\cdot) \left( \Psiz^*\la \sqrt{2\hlev}T(s)\cos\omega_c\cdot,\Psi\ra\right)^2 \,ds \\ \notag
&=\sigma^2 2\hlev\frac{1}{\tmpr}\int_0^{\tmpr}\left( \Psiz^*\la T(s)\cos\omega_c\cdot,\Psi\ra\right)^2 \,ds \,\, =:\,\, \hlev \consa_d.
\end{align}
Note that $\consa_b, \consa_d$ are both positive. Further, $2\consa_b=\consa_d$ because, using \eqref{eq:TPhi_eq_PhieB}
\begin{align*}
\Psiz^*\la T(t)\cos\omega_c\cdot,\Psi\ra \,\,\,&=\,\,\,\Psiz^*\la \cos\omega_ct\,\Phi_1-\sin\omega_ct\,\Phi_2,\Psi\ra \\
&=\cos\omega_ct\Psiz^*\la \Phi_1,\Psi\ra -\sin\omega_ct\Psiz^*\la \Phi_2,\Psi\ra \\
&=\cos\omega_ct\Psiz_1 -\sin\omega_ct\Psiz_2,
\end{align*}
$$\consa_d=2\sigma^2 \frac{1}{\tmpr}\int_0^{\tmpr} (\cos\omega_ct\Psiz_1 -\sin\omega_ct\Psiz_2)^2\,ds = \sigma^2( \Psiz_1^2+\Psiz_2^2 )= \sigma^2 \Psiz^*\Psiz=2\consa_b.$$

\begin{rmk}\label{rmk:crucial}
We will use later the fact that $\sup_{\hlev \in [0,H^*]}\frac{|b_H^{(2)}(\hlev)|}{\hlev}<\infty$. This can be proved, using Lipshitz condition on $G$ and \eqref{eq:TPhi_eq_PhieB} as follows:
\begin{align*}
|b_H^{(2)}(\hlev)| & \leq \bigg|\frac{1}{\tmpr}\int_0^{\tmpr}\big\{ G(0) + \big( G(T(s)\sqrt{2\hlev}\cos\omega_c\cdot)-G(0) \big) \big\}\,\Psiz^*\la T(s)\sqrt{2\hlev}\cos\omega_c\cdot,\Psi\ra \,ds\bigg| \\
& \leq \bigg|\frac{1}{\tmpr}\int_0^{\tmpr}G(0) \,\Psiz^*\la T(s)\sqrt{2\hlev}\cos\omega_c\cdot,\Psi\ra \,ds\bigg| \\ & \qquad + \bigg|\frac{1}{\tmpr}\int_0^{\tmpr}\big( G(T(s)\sqrt{2\hlev}\cos\omega_c\cdot)-G(0) \big) \,\Psiz^*\la T(s)\sqrt{2\hlev}\cos\omega_c\cdot,\Psi\ra \,ds\bigg| \\
& \leq 0+\frac{1}{\tmpr}\int_0^{\tmpr}K_G\,||T(s)\sqrt{2\hlev}\cos\omega_c\cdot ||\,\,|\Psiz^*\la T(s)\sqrt{2\hlev}\cos\omega_c\cdot,\Psi\ra| \,ds \\ 
&\leq C\hlev.
\end{align*}
Using similar means, it can also be proved that $b_H^{(2)}$ is Lipshitz. \hfill $\triangle$
\end{rmk}


Define an operator $\igenH$ by\footnote{$\igenH f_H\in C([0,H^*])$ means that $\igenH f_H\in C((0,H^*))$ and the limits $\lim_{\hlev \downarrow 0}\igenH f_H$, $\lim_{\hlev \uparrow H^*}\igenH f_H$ exists and are finite. See chapter 8 section 1 of \cite{Ethier_Kurtz}.}
\begin{align}
\dom(\igenH)=\bigg\{ f_H \in C([0,H^*]) & \cap C^2((0,H^*))\,:\,\notag \\
& \igenH f_H\in C([0,H^*]) \text{ and } \lim_{\hlev \uparrow H^*}(\igenH f_H)(\hlev)=0 \bigg\}, \notag \\ 
\text{for } \hlev \in (0,H^*), \qquad (\igenH f_H)(\hlev)&=b_H(\hlev)\dot{f}_H(\hlev)+\frac12\sigma^2_{H}(\hlev)\ddot{f}_{H}(\hlev). \label{eq:igenHdef_limitproc}
\end{align}

The main result of this paper is the following:
\begin{thm}\label{prop:mainresultaddnoise}
Let the process $\Xeps$ be given by \eqref{eq:main_in_intgl_form_timesclaechange_drophat}. Define 
$$\hprc^{\eps}(t):=\ham(\hat{\pj}^{\eps}_t \Xeps) \text{ where } \ham(\eta):=\frac12\la \eta,\Psi \ra^*\la \eta,\Psi \ra.$$
The law of the process $\{ \hprc^\eps({t\wedge\stopt})\,;\,t\geq 0\}$
converges weakly as $\eps \to 0$ to the law of $\{\avgHproc(t\wedge \stoptH) \,;\,t\geq 0\}$ where $\avgHproc$ is the solution of the SDE
$$d\avgHproc(t)=b_H(\avgHproc(t))dt+\sigma_H(\avgHproc(t))dW(t), \qquad \quad \avgHproc(0)=\ham(\xi),$$
and where $\stoptH:=\inf\{t\geq 0\,:\,\avgHproc(t)\geq H^*\}$.

The process $\{\avgHproc(t\wedge \stoptH) \,;\,t\geq 0\}$ is a Markov process whose generator (of the transition semigroup) is an extension of $\igenH$ given by \eqref{eq:igenHdef_limitproc}. 
\end{thm}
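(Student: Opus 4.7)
\medskip

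\textbf{Strategy.} The plan is the standard martingale-problem scheme: (i) prove tightness of the family $\{\hprc^\eps(\cdot\wedge\stopt)\}_\eps$ in $C([0,T],[0,H^*])$; (ii) pass to the limit in the martingale identity of Proposition \ref{prop:hofHmart} and identify every limit point as a solution of the martingale problem associated with the operator $\igenH$ stopped at the boundary $\{\hlev=H^*\}$; (iii) invoke well-posedness of this stopped martingale problem to conclude weak convergence of the full family (not merely along subsequences) and to obtain the Markov property together with the identification of the generator of the transition semigroup. In what follows, fix a test function $\htf\in C^2([0,H^*])$ and write $\Phi_\htf(\eta):=(\sgenonqtf(\htf\circ\ham))(\eta)$ for the integrand appearing in Proposition \ref{prop:hofHmart}.

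\textbf{Tightness.} On the set $\{t\le\stopt\}$ we have $\ham(\hat\pj^\eps_t\Xeps)\le H^*$, so $|\Psiz^*\la\hat\pj^\eps_t\Xeps,\Psi\ra|$ is bounded uniformly in $\eps$ and $t$, and therefore $\Phi_\htf$ is bounded on the orbit by a deterministic constant depending only on $\htf$, $G$, $F$. The Doob inequality applied to the martingale $M^{\htf,\eps}$ of Proposition \ref{prop:hofHmart} then yields uniform $L^2$-estimates on the increments of $\htf\circ\hprc^\eps(\cdot\wedge\stopt)$, which in turn supply the Aldous--Kurtz tightness criterion (it suffices to verify the criterion for $\htf(\hlev)=\hlev$ and $\htf(\hlev)=\hlev^2$ to obtain tightness of $\hprc^\eps(\cdot\wedge\stopt)$ itself in $C([0,T])$).

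\textbf{Identification of the limit via averaging.} This is the heart of the proof. For any $\htf\in C^2([0,H^*])$, the process
\[
\htf(\hprc^\eps(t\wedge\stopt))-\htf(\hprc^\eps(0))-\int_0^{t\wedge\stopt}\Phi_\htf(\hat\pj^\eps_u\Xeps)\,du
\]
is an $\F_t$-martingale. It must be shown that, as $\eps\downarrow 0$,
\begin{equation*}
\int_0^{t\wedge\stopt}\Phi_\htf(\hat\pj^\eps_u\Xeps)\,du\;-\;\int_0^{t\wedge\stopt}(\igenH\htf)(\hprc^\eps(u))\,du\;\longrightarrow\;0\quad\text{in probability}.
\end{equation*}
The argument proceeds in three substeps. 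First, using Lipschitz continuity of $G$ and of $\htf'$, $\htf''$, together with Proposition \ref{prop:stablenormtozero} (the stable projection $(I-\pi)\hat\pj^\eps_u\Xeps$ has vanishing time-integral in expectation), one replaces $\Phi_\htf(\hat\pj^\eps_u\Xeps)$ by $\Phi_\htf(\pi\hat\pj^\eps_u\Xeps)$ at negligible cost. Second, partition $[0,T]$ into blocks of length $\Delta=\Delta(\eps)$ with $\eps^2\ll\Delta\ll 1$; on each block $[t_k,t_k+\Delta]$ the projection evolves almost like the free semigroup, $\pi\hat\pj^\eps_u\Xeps\approx T((u-t_k)/\eps^2)\,\pi\hat\pj^\eps_{t_k}\Xeps$, because the $Q_\Lambda$-feedback and the noise contribute to $\pi\hat\pj^\eps\Xeps$ only through terms controlled by Proposition \ref{add:newprop:supboundGron} and Lemma \ref{add:newlem:bound} (both of order $\eps^a$, $\eps^{1/2}$ respectively on the block). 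Third, on a single block, the change of variable $s=(u-t_k)/\eps^2$ rewrites
\[
\frac{1}{\Delta}\int_{t_k}^{t_k+\Delta}\Phi_\htf\bigl(T((u-t_k)/\eps^2)\,\pi\hat\pj^\eps_{t_k}\Xeps\bigr)\,du
\;=\;\frac{\eps^2}{\Delta}\int_0^{\Delta/\eps^2}\Phi_\htf\bigl(T(s)\,\pi\hat\pj^\eps_{t_k}\Xeps\bigr)\,ds,
\]
and since $T(\cdot)$ is $\tmpr$-periodic on $P_\Lambda$ (equation \eqref{eq:TPhi_eq_PhieB}) and $\Delta/\eps^2\to\infty$, this converges to the period average $(\Aavg\Phi_\htf)([\pi\hat\pj^\eps_{t_k}\Xeps])=(\igenH\htf)(\hprc^\eps(t_k))$ by the definitions of $b_H$, $\sigma_H^2$ following \eqref{eq:avgdr1}--\eqref{eq:avgdiff}. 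Summing over blocks and using continuity of $\igenH\htf$ in $\hlev$ together with tightness-based equicontinuity of $\hprc^\eps$ closes the averaging argument. I expect this averaging step to be the main obstacle, precisely because one must quantify simultaneously the slowness of $\hprc^\eps$, the fast rotation of $\pi\hat\pj^\eps\Xeps$, the smallness of $(I-\pi)\hat\pj^\eps\Xeps$, and the near-periodicity of the trajectory on $P_\Lambda$.

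\textbf{Uniqueness and Markov property.} The limit coefficients are $b_H(\hlev)=\consa_b+b_H^{(2)}(\hlev)$ with $b_H^{(2)}$ Lipschitz (Remark \ref{rmk:crucial}) and $\sigma_H(\hlev)=\sqrt{\consa_d\,\hlev}$. The latter is only $\tfrac12$-H\"older, but since $b_H(0)=\consa_b>0$ the Yamada--Watanabe theorem (equivalently, Feller's boundary classification, which places $\hlev=0$ as an entrance boundary) yields pathwise uniqueness, hence uniqueness in law, for the SDE $d\avgHproc=b_H(\avgHproc)\,dt+\sigma_H(\avgHproc)\,dW$ up to the exit time $\stoptH=\inf\{t\ge0:\avgHproc(t)\ge H^*\}$. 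Uniqueness of the corresponding stopped martingale problem on $[0,H^*]$ follows in the standard manner, which both upgrades the subsequential convergence from Step (ii) to convergence of the whole family and, via the well-posedness theory of Ethier--Kurtz (Chapter 4), identifies the law of the limit with that of the stopped diffusion $\avgHproc(\cdot\wedge\stoptH)$, yields the strong Markov property, and shows that $\igenH$ with the indicated domain is (an extension of) the generator of the transition semigroup of the killed/stopped process.
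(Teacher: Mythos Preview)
Your three--step scheme (tightness, martingale--problem identification, uniqueness) is exactly the paper's outline (Remark~\ref{rmk:mainproofmotiv}), but the execution of the middle step is genuinely different, and there is one slip in the tightness step.

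\medskip
\textbf{Tightness.} Your claim that $\Phi_\htf$ is ``bounded on the orbit by a deterministic constant'' is not correct as stated: the term $G(\hat\pj^\eps_u\Xeps)$ depends on the full segment, and $(I-\pi)\hat\pj^\eps_u\Xeps$ is not deterministically bounded for $u\le\stopt$. The paper (Proposition~\ref{prop:tightness}, Lemma~\ref{lem:lemreq4tightness}) splits $G(\eta)=G(\pi\eta)+(G(\eta)-G(\pi\eta))$, bounds the first piece on $\bar\Ssp$, and controls the second via the Lipschitz constant times a uniform fourth--moment bound on $\sup_{t\le T\wedge\stopt}\|(I-\pi)\hat\pj^\eps_t\Xeps\|$, then applies the Kolmogorov criterion. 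Your Aldous argument is repairable in the same way.

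\medskip
\textbf{Averaging.} Here you diverge from the paper. You propose Khasminskii time--blocking (partition into intervals of length $\Delta$ with $\eps^2\ll\Delta\ll1$, freeze the slow variable, average the fast rotation). The paper instead builds an explicit corrector
\[
\Phi_\varphi(\eta)=\frac{1}{\tmpr}\int_0^{\tmpr}s\,\varphi(T(s)\eta)\,ds,
\qquad
\tgenonqtf(\Phi_\varphi\circ\pi)=\varphi\circ\pi-\Aavg\varphi,
\]
applies It\^o's formula to $\eps^2(\Phi_\varphi\circ\pi)(\hat\pj^\eps_\cdot\Xeps)$, and reads off the quantitative bound $C(1+t)\eps^2\|\varphi\|_{C^2(\bar\Ssp)}$ (Lemmas~\ref{lem:func_flow_der_req}--\ref{lem:careful2}, Proposition~\ref{prop:pre_projdiffgoestozero}). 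The price of the corrector method is two extra derivatives: one needs $\varphi=\sgenonqtf(f_H\circ\ham)\in C^2(\bar\Ssp)$, hence $f_H\circ\ham\in C^4(\bar\Ssp)$, whereas a generic $f_H\in\dom(\igenH)$ is only $C^2$ on $(0,H^*)$ and $\hlev\ddot f_H(\hlev)$ need not vanish at $0$ (so $\sgenonqtf(f_H\circ\ham)$ may even fail to be continuous at the origin of $P_\Lambda$). This is why the paper interposes Section~\ref{sec:behavtestfunc}: it mollifies $\igenH f_H$, solves $\igenH f^\eps=\bar\poiF^\eps$ to obtain $f^\eps\in C^4([0,H^*])$ with $\|f^\eps\circ\ham\|_{C^4}\le C\eps^{-\nu}$, $\nu<1$, and absorbs the loss into the $\eps^{2-\nu}$ rate. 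Your blocking approach needs only uniform continuity of $\Phi_\htf$ on $\bar\Ssp$, which holds automatically for $\htf\in C^2([0,H^*])$, so you can bypass the mollification entirely---a genuine simplification---provided you note that $C^2([0,H^*])$ test functions already determine uniqueness for the one--dimensional stopped diffusion. For uniqueness the paper just cites Theorem~8.1.1 of \cite{Ethier_Kurtz}; your Yamada--Watanabe/entrance--boundary argument is an acceptable alternative.
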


\begin{rmk}\label{rmk:mainproofmotiv}
The proof consists of three steps:
\begin{itemize}
\item show that the laws of $\hprc^\eps$ are tight. Then, by Prohorov's theorem, there exists at least one cluster point for the sequence of laws of $\hprc^\eps$, in the weak topology
of probability measures on $C([0,\infty);\R)$.
 
\item for any $f_H\in \dom(\igenH)$, and any $\F_s$ measurable bounded functional $\Theta_s$ of $\hprc^\eps$ show that
\begin{align}
\lim_{\eps \to 0}\expt \bigg[\bigg(f_H(\ham(\hat{\pj}^{\eps}_{t\wedge\stopt} \Xeps))&-f_H(\ham(\hat{\pj}^{\eps}_{s\wedge\stopt} \Xeps))  \notag \\
&  -\int_{s\wedge\stopt}^{t\wedge\stopt}(\igenH f_H)(\ham(\hat{\pj}^{\eps}_u \Xeps))\,du \bigg)\Theta_s(\hprc^\eps)\bigg]=0. \label{eq:clustpointsolvesmartprob}
\end{align}
This shows that any cluster point of the sequence of laws of $\hprc^\eps$ solves the martingale problem for $\igenH$.
\item show the uniqueness of martingale problem for $\igenH$.
\end{itemize}

In order to show the second step, we can make use of proposition \ref{prop:hofHmart} for $f_H \in \dom(\igenH)$ and try to average the term $(\sgenonqtf  (f_H \circ \ham))$. Unfortunately, averaging requires two more derivatives than what is available for $f_H$. To address this, in section \ref{sec:behavtestfunc} we obtain a family of smooth functions $f^\eps$ for every given function $f_H$. In section \ref{sec:avging_approxing} we address the issue of approximating $(\igenH f_H)(\ham(\hat{\pj}^{\eps}_u\Xeps))$ with $(\sgenonqtf (f^\eps \circ \ham))(\hat{\pj}^{\eps}_u\Xeps)$.  

The proof of theorem \ref{prop:mainresultaddnoise} is carried out in section \ref{sec:mainresproof}. \hfill $\triangle$ 
\end{rmk}


\section{Approximations of test functions}\label{sec:behavtestfunc}

In this section our aim is to obtain a family of smooth functions $f^\eps$ for any given $f_H\in \dom(\igenH)$. Our approach is this: given $f_H$, mollify $\igenH f_H$ and obtain the solution of $\igenH u=\poiF^\eps$ where $\poiF^\eps$ is the mollified version of $\igenH f_H$. The solution $u$ will serve our purpose. This is made precise in the following lemmas.

\begin{lem}\label{lem:behavetestfunc} 
Assume $\sigma>0$. If $\poiF\in C([0,H^*])$, then there exists a bounded solution of 
$$\igenH u=\poiF, \qquad \hlev \in (0,H^*)$$ 
such that $u\in C^1([0,H^*])$. This solution is unique upto the choice of $u(0)$. Further, there exists a constant $C$ independent of $\poiF$ such that
$$||u||_{C([0,H^*])}\leq |u(0)|+C||\poiF||_{C([0,H^*])}.$$
\end{lem}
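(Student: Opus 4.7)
The plan is to view $\igenH u = \poiF$ as a second-order linear ODE, recognize that it has a regular singularity at $\hlev = 0$ (since $\sigma_H^2(\hlev) = \consa_d \hlev$ vanishes there while $b_H(0)=\consa_b>0$ by \eqref{eq:avgdr1} and the fact that $b_H^{(2)}(0)=0$ noted in Remark \ref{rmk:crucial}), and solve it explicitly by reducing to a first-order ODE in $v:=u'$. Substituting $v$ into $\igenH u = \poiF$ and dividing by $\consa_d \hlev /2$ gives
$$v'(\hlev) + \left(\frac{1}{\hlev} + \frac{2b_H^{(2)}(\hlev)}{\consa_d \hlev}\right) v(\hlev) = \frac{2\poiF(\hlev)}{\consa_d \hlev},$$
where I have used the identity $2\consa_b = \consa_d$ and where the coefficient $b_H^{(2)}(\hlev)/\hlev$ is bounded by Remark \ref{rmk:crucial}. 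Setting $q(\hlev) := \exp\!\left( \int_0^\hlev \frac{2 b_H^{(2)}(s)}{\consa_d s} ds\right)$, which is well defined, continuous, and bounded above and below by positive constants on $[0,H^*]$, the integrating factor is $\hlev\, q(\hlev)$.

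Multiplying through, I would obtain $(\hlev q(\hlev) v(\hlev))' = \tfrac{2}{\consa_d} q(\hlev)\poiF(\hlev)$, so
$$v(\hlev) = \frac{1}{\hlev q(\hlev)} \int_0^\hlev \frac{2}{\consa_d} q(s)\poiF(s)\,ds \;+\; \frac{C_1}{\hlev q(\hlev)}.$$
For $v$ to remain bounded (equivalently for $u \in C^1([0,H^*])$) as $\hlev \downarrow 0$, the constant $C_1$ must vanish; this is precisely the step that pins down uniqueness up to the choice of $u(0)$. The remaining constant is $u(0)$, and $u$ itself is recovered by $u(\hlev) = u(0) + \int_0^\hlev v(s)\,ds$. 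Continuity of $v$ at $0$ follows from L'H\^opital applied to the explicit formula: since $q(0)=1$ and $\hlev q'(\hlev) = q(\hlev)\cdot 2b_H^{(2)}(\hlev)/\consa_d \to 0$, we get $v(0) = 2\poiF(0)/\consa_d$, and continuity on $(0,H^*]$ is immediate.

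For the bound, since $q$ is bounded above and below by positive constants $q_-,q_+$ on $[0,H^*]$,
$$|v(\hlev)| \;\leq\; \frac{2\|\poiF\|_{C([0,H^*])}}{\consa_d} \cdot \frac{1}{\hlev q(\hlev)}\int_0^\hlev q(s)\,ds \;\leq\; \frac{2\,q_+}{\consa_d\, q_-}\|\poiF\|_{C([0,H^*])},$$
which gives a uniform bound on $v$, and hence on $u$ via $|u(\hlev)| \leq |u(0)| + H^* \sup |v|$, yielding the claimed inequality with $C = 2 H^* q_+/(\consa_d q_-)$.

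The main obstacle is really only notational: carefully identifying which integration constant must vanish in order to obtain a $C^1$ solution at the degenerate endpoint $\hlev = 0$, and verifying that the boundedness of $b_H^{(2)}(\hlev)/\hlev$ (already established in Remark \ref{rmk:crucial}) is exactly what makes the integrating factor well-behaved so that $q$ is bounded away from $0$ and $\infty$ on $[0,H^*]$. No mention of the boundary condition at $H^*$ is needed because the problem is only posed on the open interval $(0,H^*)$ and $u$ is sought bounded; the condition $\lim_{\hlev\uparrow H^*}(\igenH f_H)(\hlev)=0$ appearing in $\dom(\igenH)$ is not relevant here and will be used only later when identifying the martingale problem.
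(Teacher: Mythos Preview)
Your proof is correct and follows essentially the same route as the paper: both reduce the equation to a first-order ODE for $u'$, solve via the integrating factor $\hlev\cdot\exp\big(\int \frac{2b_H^{(2)}(s)}{\consa_d s}\,ds\big)$ (the paper normalizes at $1$ rather than $0$, calling the exponential factor $\Xi_I$), kill the free constant to keep $u'$ bounded at $\hlev=0$, and then read off the sup-norm bound from the boundedness of the exponential factor. Your presentation is in fact slightly more explicit than the paper's about why $C_1=0$ is forced and about the value $v(0)=2\poiF(0)/\consa_d$ via L'H\^opital.
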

\begin{proof}
The function
$$I(\hlev)=\hlev \,\Xi_I(\hlev), \qquad \quad \Xi_I(\hlev)=\exp\left(2\int_1^\hlev \frac{b_H^{(2)}(\xi)}{\consa_d \,\xi }d\xi\right)$$
serves as an integrating factor as it satisfies $\dot{I}=I\frac{b_H}{\frac12\sigma_H^2}$.
Note that (see remark \ref{rmk:crucial}) $\Xi_I \in C^{\infty}(\R_+\cup \{0\})$ and $\Xi_I(1)=1$ and $0<\inf_{[0,H^*]}\Xi_I(\hlev)<\infty$.

The equation can be solved to give
\begin{align}\label{eq:usefulinuniqueness}
\dot{u}(\hlev)=\frac{2}{I(\hlev)}\int_0^\hlev \frac{\poiF(s)}{\sigma_H^2(s)}I(s)ds.
\end{align}
 Integrating, we find that
\begin{align*}
u(\hlev)&=\hat{C}+\int_0^h \frac{2}{s\Xi_I(s)}\int_0^s \frac{\poiF(r)}{\consa_d}\,\Xi_I(r)\,dr\,ds  \\
&=\hat{C}+\hlev \int_0^1 \frac{2}{s\Xi_I(\hlev s)}\int_0^s \frac{\poiF(\hlev r)}{\consa_d}\,\Xi_I(\hlev r)\,dr\,ds\,\,=:\,\,\hat{C}+\hlev \,\Xi_{\poiF}(\hlev). 
\end{align*}
With the above choice of $u$, $\lim_{\hlev \downarrow 0}u(\hlev)=\hat{C}$. The estimate on $||u||_{C([0,H^*])}$ is straight forward.
\end{proof}

\begin{lem}
If $f_H\in \dom_H$, then $f_H \in C^1([0,H^*])$. 
\end{lem}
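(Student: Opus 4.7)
The plan is to use the previous lemma together with a uniqueness-by-boundedness argument. By assumption, $f_H \in C([0,H^*]) \cap C^2((0,H^*))$ with $\poiF := \igenH f_H \in C([0,H^*])$. So $f_H$ is a bounded solution (on the open interval) of $\igenH u = \poiF$, and the previous lemma furnishes another bounded solution $u$ which additionally lies in $C^1([0,H^*])$. The task reduces to showing that $f_H$ and $u$ differ by at most a constant.

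Set $v := f_H - u$, so that $v$ is bounded on $[0,H^*]$, is $C^2$ on $(0,H^*)$, and satisfies the homogeneous equation $b_H \dot v + \tfrac12 \sigma_H^2 \ddot v = 0$ on $(0,H^*)$. Using the integrating factor $I(\hlev) = \hlev \, \Xi_I(\hlev)$ from the preceding proof, this is equivalent to $(I \dot v)' = 0$, so that
\begin{equation*}
\dot v(\hlev) = \frac{C}{\hlev \, \Xi_I(\hlev)}, \qquad \hlev \in (0, H^*),
\end{equation*}
for some constant $C \in \R$.

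The key step is to show $C = 0$. Since $\Xi_I \in C^\infty(\R_+\cup\{0\})$ and $0<\inf_{[0,H^*]}\Xi_I \leq \sup_{[0,H^*]}\Xi_I < \infty$, the function $1/(\hlev\,\Xi_I(\hlev))$ is bounded above and below by positive multiples of $1/\hlev$ near zero, hence is not integrable on $(0,1]$. Consequently, for any $\hlev \in (0,1)$,
\begin{equation*}
v(1) - v(\hlev) = C\int_\hlev^1 \frac{ds}{s\,\Xi_I(s)},
\end{equation*}
and the right-hand side diverges as $\hlev \downarrow 0$ unless $C = 0$. Since $v$ is bounded on $[0,H^*]$, the left-hand side stays bounded, forcing $C = 0$. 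Therefore $\dot v \equiv 0$ on $(0,H^*)$, so $v$ is constant and $f_H = u + v(1) \in C^1([0,H^*])$.

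The only possible obstacle would have been at the right endpoint $H^*$, but no argument is needed there: the lemma already gives $u \in C^1([0,H^*])$, and the uniqueness argument at the left endpoint is sufficient to identify $f_H$ with $u$ up to an additive constant, inheriting $C^1$-regularity on the full closed interval.
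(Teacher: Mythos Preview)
Your proof is correct and follows essentially the same approach as the paper: the paper's one-line proof just invokes Lemma~\ref{lem:behavetestfunc}, whose uniqueness clause (``unique up to the choice of $u(0)$'') is precisely what you establish in detail via the integrating-factor and non-integrability argument. You have made explicit the step that the paper leaves implicit in the statement of that lemma.
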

\begin{proof}
$f_H\in \dom_H$ implies that $\poiF:=\igenH f_H \in C([0,H^*])$. Use Lemma \ref{lem:behavetestfunc} 
\end{proof}

Fix an exponent $\nu<1$. 
\begin{lem}\label{lem:fHapprox}
Let $f_H\in \dom_H$. There is a sequence $\{f^\eps; \eps>0\}$ of elements of $C([0,H^*])\cap C^4([0,H^*])$ such that
\begin{enumerate}
\item $\lim_{\eps\to 0} ||f^\eps-f_H||_{C([0,H^*])}=0$.
\item $\lim_{\eps\to 0} ||\igenH f^\eps-\igenH f_H||_{C([0,H^*])}=0$.
\item $f^\eps \circ \ham \in C^4(\bar{\Ssp})$  and \footnote{Here we are restricting our attention to $\Ssp$ and not whole of $\C$.} $\sup_{\eps>0}\eps^\nu ||f^\eps \circ \ham||_{C^4(\bar{\Ssp})}<\infty$.
\end{enumerate}
\end{lem}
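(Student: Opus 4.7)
The plan is to mollify the source term $\poiF := \igenH f_H \in C([0,H^*])$ and then invert $\igenH$ via Lemma \ref{lem:behavetestfunc}. Fix a standard mollifier $\rho \in C^\infty_c(\R)$ with $\int_\R \rho = 1$ and write $\molli_\delta(x) := \delta^{-1}\rho(x/\delta)$. Let $\widetilde{\poiF}$ be a compactly supported continuous extension of $\poiF$ to $\R$ (via reflection and a smooth cutoff), and set
$$\poiF^\eps := \widetilde{\poiF} * \molli_{\delta(\eps)}, \qquad \delta(\eps) := \eps^{\nu/2}.$$
Standard mollifier estimates give $\|\poiF^\eps - \poiF\|_{C([0,H^*])}\to 0$ and $\|\poiF^\eps\|_{C^k([0,H^*])} \leq C_k \delta(\eps)^{-k}\|\widetilde{\poiF}\|_\infty$ for every $k\geq 1$. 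Define $f^\eps$ to be the bounded solution of $\igenH f^\eps = \poiF^\eps$ on $(0, H^*)$ supplied by Lemma \ref{lem:behavetestfunc}, normalized by $f^\eps(0) = f_H(0)$.

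Properties (1) and (2) follow easily. Property (2) is the identity $\igenH f^\eps - \igenH f_H = \poiF^\eps - \poiF \to 0$ in $C([0,H^*])$. For property (1), $u := f^\eps - f_H$ solves $\igenH u = \poiF^\eps - \poiF$ with $u(0) = 0$, so the stability bound in Lemma \ref{lem:behavetestfunc} gives $\|f^\eps - f_H\|_{C([0,H^*])} \leq C\|\poiF^\eps - \poiF\|_\infty \to 0$.

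The substance is property (3): I must show $f^\eps \in C^4([0,H^*])$ with $\|f^\eps\|_{C^4} \lesssim \eps^{-\nu}$. On any subinterval $[\alpha, H^*]$ with $\alpha > 0$, the ODE $\tfrac12\sigma_H^2\ddot u + b_H \dot u = \poiF^\eps$ is nondegenerate with smooth coefficients (Remark \ref{rmk:crucial}), so solving for $\ddot u$ and differentiating inductively yields $\|f^\eps\|_{C^{k+2}([\alpha, H^*])} \leq C_\alpha\|\poiF^\eps\|_{C^k}$. At the degenerate endpoint $\hlev = 0$ I would use the explicit representation from the proof of Lemma \ref{lem:behavetestfunc},
$$f^\eps(\hlev) = f_H(0) + \hlev\int_0^1 \frac{2}{s\,\Xi_I(\hlev s)}\int_0^s \frac{\poiF^\eps(\hlev r)}{\consa_d}\Xi_I(\hlev r)\,dr\,ds,$$
together with the companion formula $\dot f^\eps(\hlev) = \frac{2}{\Xi_I(\hlev)\consa_d}\int_0^1 \poiF^\eps(\hlev t)\Xi_I(\hlev t)\,dt$, which extends continuously to $\hlev = 0$. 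Differentiating under the integral and integrating by parts (in the variable $t \in [0,1]$) to trade derivatives of $\poiF^\eps$ against boundary values recovers the same two-derivative gain up to $\hlev = 0$, so $\|f^\eps\|_{C^4([0,H^*])} \leq C\|\poiF^\eps\|_{C^2} \leq C\delta(\eps)^{-2} = C\eps^{-\nu}$. Since $\ham$ is a continuous quadratic form on $\C$, it is Fr\'echet-smooth with all derivatives bounded on the bounded set $\bar{\Ssp}$, and Fa\`a di Bruno's chain rule then yields $\|f^\eps\circ\ham\|_{C^4(\bar{\Ssp})} \leq C\|f^\eps\|_{C^4([0,H^*])} \leq C\eps^{-\nu}$, completing (3). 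The main obstacle is controlling the $C^4$ norm of $f^\eps$ uniformly up to the degenerate endpoint $\hlev = 0$: the usual elliptic bootstrap fails there, and one must extract the gain of two derivatives from the explicit integral formula by a careful integration by parts.
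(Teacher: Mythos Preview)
Your overall strategy---extend $\poiF=\igenH f_H$ to $\R$, mollify, and invert $\igenH$ via Lemma~\ref{lem:behavetestfunc} with the normalisation $f^\eps(0)=f_H(0)$---is exactly the paper's approach, and your arguments for parts (1) and (2) are correct.

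The gap is in part (3): the claimed \emph{two}-derivative gain $\|f^\eps\|_{C^4}\le C\|\poiF^\eps\|_{C^2}$ does not survive at the degenerate endpoint $\hlev=0$, and your integration-by-parts in $t\in[0,1]$ does not rescue it. Consider the model problem $\hlev\ddot u+\dot u=g$ (this is the leading part of $\igenH$ near $0$, since $2\consa_b=\consa_d$ and $b_H^{(2)}(\hlev)=O(\hlev)$). The bounded solution has $\dot u(\hlev)=\int_0^1 g(\hlev t)\,dt$, whence $u^{(k+1)}(\hlev)=\int_0^1 t^k g^{(k)}(\hlev t)\,dt$ and in particular $u^{(4)}(0)=\tfrac14 g'''(0)$. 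Taking $g_n(\hlev)=n^{-1}\sin(n\hlev)$ gives $\|g_n\|_{C^2}\sim n$ but $|u_n^{(4)}(0)|\sim n^2$, so no estimate $\|u\|_{C^4}\le C\|g\|_{C^2}$ can hold with $C$ independent of $g$. Integrating by parts in $t$ only rewrites $u^{(4)}=\hlev^{-1}\bigl(g''(\hlev)-3u^{(3)}(\hlev)\bigr)$, which still needs $g\in C^3$ to be bounded near $0$. The degenerate operator therefore yields only a \emph{one}-derivative gain at $\hlev=0$.

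With your scale $\delta(\eps)=\eps^{\nu/2}$ this gives at best $\|f^\eps\|_{C^4}\le C\|\poiF^\eps\|_{C^3}\le C\eps^{-3\nu/2}$, and $\eps^\nu\|f^\eps\circ\ham\|_{C^4(\bar\Ssp)}$ is unbounded as $\eps\to0$. The fix is simply to mollify at a finer scale: the paper takes $\delta(\eps)=\eps^{\nu/4}$ and uses no gain at all, bounding $\|f^\eps\|_{C^4}\le C\|\poiF^\eps\|_{C^4}\le C\eps^{-\nu}$ directly from the explicit integral formula. (Taking $\delta(\eps)=\eps^{\nu/3}$ and using the one-derivative gain would also work.) Once the scale is corrected, the rest of your argument for (3), including the chain-rule step for $f^\eps\circ\ham$, goes through.
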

\begin{proof}
Since $\igenH f_H \in C([0,H^*])$, we can find an extension $\bar{\poiF} \in C_c(\R)$ of $\igenH f_H$. Fix a mollifier function $\molli \in C_c^\infty(\R)$ such that $\molli$ is even and $\int_{z\in \R}\molli(z)dz=1$.  Define
$${\bar{\poiF}}^\eps(\hlev):=\eps^{-\nu/4}\int_{\R}\molli\left(\frac{\hlev-z}{\eps^{\nu/4}}\right)\bar{\poiF}(z)dz$$
for all $\hlev \in \R$.  Let $f^\eps$ be the unique bounded solution of $\igenH f^\eps =\bar{\poiF}^\eps$ on $(0,H^*)$ with $f^\eps(0)=f_H(0)$. Then $u^\eps=f^\eps-f_H$ is the unique bounded solution of $\igenH u^\eps =\bar{\poiF}^\eps-\bar{\poiF}$ on $(0,H^*)$ with $u^\eps(0)=0$. By the estimate of lemma \ref{lem:behavetestfunc}, we have that
$$||f^\eps-f_H||_{C([0,H^*])}\leq C||\bar{\poiF}^\eps-{\bar{\poiF}}||_{C([0,H^*])}.$$ 
Because $\lim_{\eps \to 0}||\bar{\poiF}^\eps-{\bar{\poiF}}||_{C([0,H^*])}=0$, we have that $\lim_{\eps\to 0} ||f^\eps-f_H||_{C([0,H^*])}=0$. Further, from the explicit expression for the solution in lemma \ref{lem:behavetestfunc}, and the fact that $\bar{\poiF}^\eps \in C^\infty$, we have that $f^\eps \in C^4([0,H^*])$. This proves part 1. 

Part 2 is just restating the fact that $\lim_{\eps \to 0}||\bar{\poiF}^\eps-{\bar{\poiF}}||_{C([0,H^*])}=0$.

Part 3: 
Since we have $\sup_{0<\eps<1}\eps^{\nu}||\bar{\poiF}^\eps||_{C^4([0,H^*])}<\infty$, using the explicit expression for the solution in \ref{lem:behavetestfunc}, we find that $||f^\eps||_{C^4([0,H^*])}\leq C \eps^{-\nu}$.
\end{proof}

\section{Averaging}\label{sec:avging_approxing}
In this section we address the issue of approximating $(\sgenonqtf (f^\eps \circ \ham))(\hat{\pj}^{\eps}_u\Xeps)$ with  $(\igenH f_H)(\ham(\hat{\pj}^{\eps}_u\Xeps))$. For this purpose, write
\begin{align}
(\sgenonqtf (f^\eps \circ \ham))(\hat{\pj}^{\eps}_u\Xeps)-(\igenH f_H)(\ham(\hat{\pj}^{\eps}_u\Xeps)) \,\,=& \,\,\bigg((\sgenonqtf (f^\eps \circ \ham))(\hat{\pj}^{\eps}_u\Xeps)-(\sgenonqtf (f^\eps \circ \ham))(\pi\hat{\pj}^{\eps}_u\Xeps)\bigg) \notag \\ 
& \qquad + \bigg((\sgenonqtf (f^\eps \circ \ham))(\pi\hat{\pj}^{\eps}_u\Xeps)-(\igenH f_H)(\ham(\hat{\pj}^{\eps}_u\Xeps))\bigg). \label{eq:justsplit}
\end{align}

Note that for $\eta \in P_{\Lambda}$ we have $(\igenH f_H)(\ham(\eta))=(\Aavg (\sgenonqtf(f_H\circ \ham)))([\eta])$. Hence, when working with the second term in the RHS of equation \eqref{eq:justsplit}, we will be concerned with difference of a function and its average. This motivates the rest of this section until lemma \ref{lem:projdiffgoestozero}.

For $\varphi \in C(\bar{\Ssp};\R)$, we want to bound the difference
\begin{equation}\label{eq:stch_avg_to_bound}
\expteps\left| \int_0^{t\wedge \stopt} \left\{ \varphi(\pi \hat{\pj}^{\eps}_s \Xeps)-(\Aavg \varphi)([\pi \hat{\pj}^{\eps}_s \Xeps])\right\}\,ds\right|.
\end{equation}

Recall $\timepr=\tmprval$.
\begin{lem}\label{lem:func_flow_der_req}
Define $\Phi_{\cdot}:C(P_{\Lambda})\to C(P_{\Lambda})$ by
\begin{equation}\label{eq:func_flow_der_req} 
\Phi_{\varphi}(\eta):=\frac{1}{\timepr}\int_0^{\timepr} s\,\varphi(T(s)\eta)\,ds, \qquad \quad \forall \eta \in P_{\Lambda}.
\end{equation}
Then
\begin{equation}\label{eq:func_flow_der_req2}
\frac{d}{dt}\bigg|_{t=0} \Phi_{\varphi}(T(t)\eta)\,\,=\,\,  \varphi(\eta)-(\Aavg \varphi)([\eta]).
\end{equation}
There is a constant $C$ which does not depend on $\varphi$  such that if $\varphi\in C^2(\bar{\Ssp})$ then
\begin{align}\label{eq:usefulPhiphibound}
||\Phi_{\varphi}||_{C^2(\bar{\Ssp})}\leq C ||\varphi||_{C^2(\bar{\Ssp})}.
\end{align}
\end{lem}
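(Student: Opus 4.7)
The key observation underlying \eqref{eq:func_flow_der_req2} is that on the two-dimensional critical subspace $P_\Lambda$, the semigroup $T(t)$ acts as a rigid rotation with period exactly $\timepr = 2\pi/\omega_c$: from \eqref{eq:TPhi_eq_PhieB} we have $T(\timepr)\Phi(\cdot) = \Phi(\cdot)e^{B\timepr} = \Phi(\cdot)$ since $e^{B(2\pi/\omega_c)} = I$, so $T(\timepr)\eta = \eta$ for every $\eta \in P_\Lambda$. My plan is to combine this periodicity with a Leibniz-rule differentiation of a parameter integral, and then to get the $C^2$ estimate directly by differentiating under the integral, using that $T(s)$ is linear and isometric on $P_\Lambda$ for $s \in [0,\timepr]$.

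For \eqref{eq:func_flow_der_req2}, I would first write $\Phi_\varphi(T(t)\eta) = \frac{1}{\timepr}\int_0^\timepr s\,\varphi(T(s)T(t)\eta)\,ds = \frac{1}{\timepr}\int_0^\timepr s\,\varphi(T(s+t)\eta)\,ds$ (using the semigroup property), and then change variables $u = s+t$ to obtain $\frac{1}{\timepr}\int_t^{\timepr+t}(u-t)\,\varphi(T(u)\eta)\,du$. Applying Leibniz's rule and evaluating at $t=0$ yields a boundary contribution equal to $\varphi(T(\timepr)\eta) = \varphi(\eta)$ (invoking the periodicity observed above, and noting the other boundary contribution vanishes because the integrand factor $(u-t)$ is zero at $u=t$), together with an interior contribution $-\frac{1}{\timepr}\int_0^\timepr \varphi(T(u)\eta)\,du = -(\Aavg\varphi)([\eta])$. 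Summing the two gives exactly the claim.

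For \eqref{eq:usefulPhiphibound}, I would differentiate under the integral sign. Since $\eta \mapsto T(s)\eta$ is linear on $P_\Lambda$, the Fréchet derivatives are
\[
D^k \Phi_\varphi(\eta)[h_1,\dots,h_k] \,=\, \frac{1}{\timepr}\int_0^\timepr s\, D^k \varphi(T(s)\eta)[T(s)h_1,\dots,T(s)h_k]\,ds, \qquad k=0,1,2,
\]
and part 3 of Lemma \ref{lem:decomp_unpert} gives $\|T(s)|_{P_\Lambda}\|_{op} = 1$, so each sup-norm is bounded by $(\timepr/2)\|\varphi\|_{C^k(\bar\Ssp)}$, yielding the claim with $C = \timepr/2$. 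To make sense of $\varphi(T(s)\eta)$ for $\eta \in \bar\Ssp$, I would invoke the conservation $\ham(T(s)\eta)=\ham(\eta)$ on $P_\Lambda$ (Remark \ref{rmk:sayingHevolvesslow}), which keeps the entire orbit inside $\bar\Ssp$. I do not anticipate any genuine obstacle: the argument is essentially a one-line Leibniz computation whose success hinges entirely on the clean periodicity $T(\timepr)\eta=\eta$ on $P_\Lambda$, and the estimate is a direct consequence of linearity plus the unit operator norm of $T(s)$ on $P_\Lambda$.
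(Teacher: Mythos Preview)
Your argument is correct and follows essentially the same route as the paper: the paper reduces \eqref{eq:func_flow_der_req2} to the elementary identity for a $\timepr$-periodic function $f(s)=\varphi(T(s)\eta)$, which is exactly what your change of variables and Leibniz computation establish, and the paper omits a proof of \eqref{eq:usefulPhiphibound} altogether, so your differentiation-under-the-integral argument (using $\|T(s)|_{P_\Lambda}\|_{op}=1$ and $T(s)$-invariance of $\bar\Ssp$) simply fills in that gap.
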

\begin{proof}
\eqref{eq:func_flow_der_req2} follows from the fact that for a $\timepr$-periodic continuous function $f:\R \to \R$, 
\begin{align*}
\lim_{t\to 0}\frac1t\bigg(\frac{1}{\timepr}\int_0^{\timepr}s\,f(s+t)ds  -\frac{1}{\timepr}\int_0^{\timepr}s\,f(s)ds\bigg)\,\,=\,\,f(0)-\frac{1}{\timepr}\int_0^{\timepr}f(s)ds.
\end{align*}
\end{proof}

Let $(\zeta.\nabla)\varphi(\eta)$ denote the Gateaux differential  of $\varphi$ evaluated at $\eta$ in the direction $\zeta$.
\begin{lem}\label{lem:careful}
Let $\varphi \in C^2(\bar{\Ssp})$. Let $\Phi_{\varphi}$ be defined as above. Then $\Phi_{\varphi}\circ \pi \in \dom(\genonqtfext)$, and we have 
\begin{align}\label{eq:BPhipi}
(\tgenonqtf (\Phi_{\varphi}\circ \pi))(\eta)\,\,=\,\,\varphi(\pi\eta)-(\Aavg \varphi)([\pi\eta]),
\end{align}
\begin{align}\label{eq:LPhipi}
(\sgenonqtf (\Phi_{\varphi}\circ \pi))(\eta)\,&=\,(\sgenonqtf (\Phi_{\varphi}\circ \pi))^{(1)}(\eta)\,+\,(\sgenonqtf (\Phi_{\varphi}\circ \pi))^{(2)}(\eta),  
\end{align}
where
\begin{align*}
(\sgenonqtf (\Phi_{\varphi}\circ \pi))^{(1)}(\eta)\, &= \,
 \frac12F^2(\eta)\frac{1}{\timepr}\int_0^{\timepr}s\,\bigg((T(s)\pih \Ind).\nabla\bigg)^2\varphi(\Phi e^{Bs}\la \eta,\Psi\ra)\,ds,\\ 
(\sgenonqtf (\Phi_{\varphi}\circ \pi))^{(2)}(\eta)\, &= \,\, G(\eta)\frac{1}{\timepr}\int_0^{\timepr}s \bigg((T(s)\pih \Ind).\nabla\bigg)\varphi(\Phi e^{Bs}\la \eta,\Psi\ra)\,ds. 
\end{align*}
There is a constant $C$ independent of $\varphi$ such that\footnote{Here we are restricting our attention to $\eta\in \bar{\Ssp}$.} $||(\sgenonqtf (\Phi_{\varphi}\circ \pi))^{(i)}||_{C(\bar{\Ssp})}\leq C ||\Phi_{\varphi}||_{C^2(\bar{\Ssp})}.$
\end{lem}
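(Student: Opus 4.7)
The plan is to reduce everything to a finite-dimensional computation in the coordinates $z = \langle\eta,\Psi\rangle \in \R^2$, using $\pi\eta = \Phi z$, $T(s)\Phi = \Phi e^{Bs}$ from \eqref{eq:TPhi_eq_PhieB}, and $\pih\Ind = \Phi\Psi(0)$. Representing $L_0$ via its BV measure $\mu$, one has $\langle\eta,\psi_i\rangle = \eta(0)\psi_i(0) + [f_i,g_i|\eta]$ with $f_i(x)=x$ and $g_i(u) = \int_{-r}^u \psi_i(u-\theta)\,d\mu(\theta)$ piecewise $C^1$ --- exactly the situation covered by the extended class $\tauqext$ discussed in Section \ref{sec:mainresult}. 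Setting $\tilde\Phi_\varphi(z) := \Phi_\varphi(\Phi z) = \frac{1}{\timepr}\int_0^{\timepr} s\,\varphi(\Phi e^{Bs}z)\,ds$, which is $C^2$ in $z\in\R^2$ since $\varphi \in C^2$, and $\qtfr(u_1,u_2,z) := \tilde\Phi_\varphi(u_1+z\psi_1(0),\,u_2+z\psi_2(0))$, I would write $\Phi_\varphi(\pi\eta) = \qtfr([f_1,g_1|\eta],[f_2,g_2|\eta],\eta(0))$, placing $\Phi_\varphi\circ\pi$ in $\dom(\genonqtfext)$.

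\textbf{Step 2: the $\tgenonqtf$ identity \eqref{eq:BPhipi}.} This follows from \eqref{eq:transgendef}, $(\tgenonqtf\qtf)(\eta) = \frac{d}{dt}\big|_{t=0}\qtf(T(t)\eta)$, together with the commutation $\pi T(t) = T(t)\pi$ from Lemma \ref{lem:decomp_unpert}: $(\tgenonqtf(\Phi_\varphi\circ\pi))(\eta) = \frac{d}{dt}\big|_{t=0}\Phi_\varphi(T(t)\pi\eta)$, which by \eqref{eq:func_flow_der_req2} evaluated at $\pi\eta$ equals $\varphi(\pi\eta) - (\Aavg\varphi)([\pi\eta])$.

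\textbf{Step 3: the $\sgenonqtf$ identity \eqref{eq:LPhipi} and the bound.} A direct chain-rule calculation gives $\partial_3\qtfr = \psi_1(0)\partial_1\tilde\Phi_\varphi + \psi_2(0)\partial_2\tilde\Phi_\varphi$, and differentiating under the integral yields $\partial_i\tilde\Phi_\varphi(z) = \frac{1}{\timepr}\int_0^{\timepr} s\,[(\Phi e^{Bs}e_i).\nabla]\varphi(\Phi e^{Bs}z)\,ds$. Assembling via $\Phi e^{Bs}\Psi(0) = T(s)\pih\Ind$ recovers the claimed expression for $(\sgenonqtf(\Phi_\varphi\circ\pi))^{(2)} = G(\eta)\partial_3\qtfr$, and the analogous calculation of $\partial_3^2\qtfr$ gives $(\sgenonqtf(\Phi_\varphi\circ\pi))^{(1)} = \tfrac12 F^2(\eta)\partial_3^2\qtfr$. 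For the uniform bound, I would observe that these coincide with $G(\eta)[(\pih\Ind).\nabla]\Phi_\varphi(\pi\eta)$ and $\tfrac12 F^2(\eta)[(\pih\Ind).\nabla]^2\Phi_\varphi(\pi\eta)$ respectively (again by differentiating $\tilde\Phi_\varphi$ under the integral); then, since $F = \sigma$, $G$ is continuous on the bounded set $\bar{\Ssp} \subset P_\Lambda$, and $\|\pih\Ind\|$ is a fixed constant, the estimate in terms of $\|\Phi_\varphi\|_{C^2(\bar{\Ssp})}$ is immediate. The only genuinely delicate point in the whole argument is Step 1: the possible jump discontinuities of $\mu$ force the use of the extended quasi-tame framework $\tauqext$ already set up in the paper, without which the formal computation would fall outside the stated domain of $\genonqtf$.
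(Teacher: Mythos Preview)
Your proposal is correct and follows essentially the same route as the paper: reduce $\Phi_\varphi\circ\pi$ to a $C^2$ function of the coordinates $\langle\eta,\Psi\rangle$, invoke the extended quasi-tame class $\tauqext$, use \eqref{eq:transgendef} together with $\pi T(t)=T(t)\pi$ for the $\tgenonqtf$ identity, and carry out the chain-rule computation for $\sgenonqtf$. The paper's proof is terser (it dispatches the $\sgenonqtf$ part and the bound with ``direct computations''), whereas you spell out the representation $\qtfr$, the identification $\Phi e^{Bs}\Psi(0)=T(s)\pih\Ind$, and the boundedness argument on $\bar{\Ssp}$ explicitly; this is more detail, not a different idea.
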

\begin{proof}
Note that $T(s)\pi\eta=T(s)\Phi\la \eta,\Psi\ra=\Phi e^{Bs}\la \eta,\Psi\ra.$ Hence
\begin{align}\label{eq:Phiphipi}
(\Phi_{\varphi}\circ \pi)(\eta)=\frac{1}{\timepr}\int_0^{\timepr} s\,\varphi(\Phi e^{Bs}\la \eta,\Psi\ra)\,ds, \qquad \quad \forall \eta \in \C.
\end{align}
Because $\varphi \in C^2(\bar{\Ssp})$, the function $\Phi_{\varphi}\circ \pi$ is a $C^2$ function in arguments $\la \eta, \Psi_i\ra$. This shows that $\Phi_{\varphi}\circ \pi \in \dom(\genonqtfext)$. Now, Using \eqref{eq:transgendef} and \eqref{eq:func_flow_der_req2}
\begin{align*}
(\tgenonqtf (\Phi_{\varphi}\circ \pi))(\eta)\,\,&=\,\,\frac{d}{dt}\bigg|_{t=0} (\Phi_{\varphi}\circ \pi) (T(t)\eta)\,\,\\
&=\,\,\frac{d}{dt}\bigg|_{t=0} \Phi_{\varphi}(T(t)\pi\eta)\,\,=\,\,\varphi(\pi\eta)-(\Aavg \varphi)([\pi\eta]).
\end{align*}
The proof of other part is by direct computations.
\end{proof}

\begin{lem}\label{lem:careful2}
Consider the process $M_t$ defined by
\begin{align}\label{eq:apprxdiffapplySV4QTF}
\eps^2\{\Phi_{\varphi}(\pi \hat{\pj}^{\eps}_{t\wedge \stopt} \Xeps)-\Phi_{\varphi}(\pi \hat{\pj}^{\eps}_{0} \Xeps)\}=&\int_0^{t\wedge \stopt}(\tgenonqtf (\Phi_{\varphi}\circ \pi))(\hat{\pj}^{\eps}_{u} \Xeps)\,du \,\,+\\ \notag
&\,\,\eps^2\int_0^{t\wedge \stopt}(\sgenonqtf (\Phi_{\varphi}\circ \pi))(\hat{\pj}^{\eps}_{u} \Xeps)\,du\,\,+\,\,\eps^2M_{t}.
\end{align}
Then $M_{t}$ is a $\F_t$ martingale with quadratic variation
$$\int_0^{t\wedge \stopt}\left( F(\hat{\pj}^{\eps}_{u} \Xeps)\frac{1}{\timepr}\int_0^{\timepr}s\,\bigg((T(s)\pih \Ind).\nabla\bigg)\varphi(\Phi e^{Bs}\la \pi \hat{\pj}^{\eps}_{u} \Xeps,\Psi\ra)\,ds \,\right)^2\,du.$$
\end{lem}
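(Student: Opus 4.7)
\textbf{Proof proposal for Lemma \ref{lem:careful2}.}

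My plan is to apply the quasi-tame Itô formula (Lemma \ref{lem:appliItoformula}, via Theorem \ref{thm:qtfmart_general}) extended to $\tauqext$ with test function $\qtf = \Phi_{\varphi}\circ \pi$. Lemma \ref{lem:careful} has already verified that $\Phi_{\varphi}\circ \pi\in\dom(\genonqtfext)$, so I can invoke the formula on the SDDE \eqref{eq:main_in_intgl_form_timesclaechange_drophat}, whose drift coefficient is $\frac{1}{\eps^{2}}L_{0}(\eta)+G(\eta)$ and whose diffusion coefficient is $F(\eta)$. Because $L_{0}$ contributes only to $\tgenonqtf$ while $G$ and $F^{2}$ contribute only to $\sgenonqtf$, the absolutely continuous part of $(\Phi_{\varphi}\circ\pi)(\hat{\pj}^{\eps}_{\cdot}\Xeps)$ becomes $\int_{0}^{t}\bigl[\tfrac{1}{\eps^{2}}\tgenonqtf(\Phi_{\varphi}\circ\pi)+\sgenonqtf(\Phi_{\varphi}\circ\pi)\bigr](\hat{\pj}^{\eps}_{u}\Xeps)\,du$. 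Stopping at $t\wedge\stopt$ and multiplying by $\eps^{2}$ yields exactly the decomposition \eqref{eq:apprxdiffapplySV4QTF}, with $\eps^{2}M_{t}$ standing for $\eps^{2}$ times the $dW$-part of the Itô formula.

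Next I would identify the martingale integrand. Writing $\la\eta,\Psi\ra=\Psiz\,\eta(0)+[\,f,\,g\,|\,\eta\,]$ in the quasi-tame representation shows that the partial derivative $\partial_{k}\qtfr$ (the derivative in the $\eta(0)$-slot) of the representation of $\Phi_{\varphi}\circ\pi$ given in \eqref{eq:Phiphipi} equals, by the chain rule,
\[
\partial_{k}\qtfr\bigl|_{\eta}\;=\;\frac{1}{\timepr}\int_{0}^{\timepr}s\,\bigl(\bigl(T(s)\pih\Ind\bigr).\nabla\bigr)\varphi\bigl(\Phi e^{Bs}\la\eta,\Psi\ra\bigr)\,ds,
\]
since $\partial_{\eta(0)}\bigl(\Phi e^{Bs}\la\eta,\Psi\ra\bigr)=\Phi e^{Bs}\Psiz=T(s)\pih\Ind$. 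Therefore the $dW$-contribution to the Itô formula is $\int_{0}^{t\wedge\stopt}F(\hat{\pj}^{\eps}_{u}\Xeps)\,\partial_{k}\qtfr\bigl|_{\hat{\pj}^{\eps}_{u}\Xeps}\,dW(u)$, and dividing by the factor of $\eps^{2}$ peeled off gives $M_{t}$ as stated, with the indicated quadratic variation from Itô's isometry.

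Finally I would check that $M_{t}$ is a genuine (not merely local) martingale. On the event $\{u\le\stopt\}$ we have $\pi\hat{\pj}^{\eps}_{u}\Xeps\in\bar{\Ssp}$, so $\Phi e^{Bs}\la\pi\hat{\pj}^{\eps}_{u}\Xeps,\Psi\ra$ stays in a compact subset of $P_{\Lambda}$ uniformly in $s\in[0,\timepr]$. Since $\varphi\in C^{2}(\bar{\Ssp})$ and $F\equiv\sigma$, the integrand appearing in the quadratic variation is uniformly bounded in $u\in[0,t\wedge\stopt]$, so $\expt\int_{0}^{t\wedge\stopt}(\cdot)^{2}du<\infty$; by Definition 3.2.9 and Proposition 3.2.10 of \cite{KaratzasShreeve}, $M_{t}$ is an $\F_{t}$-martingale with the stated quadratic variation.

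The only subtle point, and the one I would handle carefully, is the time-change bookkeeping: the Itô formula as stated in Lemma \ref{lem:appliItoformula} uses the unrescaled segment extractor $\pj$, whereas \eqref{eq:main_in_intgl_form_timesclaechange_drophat} is phrased with $\hat{\pj}^{\eps}$. I would handle this by first applying the formula on the unscaled timescale to $X(\cdot)$ (which satisfies \eqref{eq:main_in_intgl_form}), then substituting $t\to t/\eps^{2}$ and using the identity $\hat{\pj}^{\eps}_{t}\Xeps=\pj_{t/\eps^{2}}X$ and the scaling $d\hat{W}(u)=\eps\,dW(u/\eps^{2})$ to redistribute the $\eps$-factors; this is exactly what produces the $\frac{1}{\eps^{2}}\tgenonqtf$ prefactor and gives the martingale term the form above.
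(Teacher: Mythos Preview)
Your proposal is correct and follows the same route the paper indicates (``similar to the proof of Proposition~\ref{prop:Hmart}''): apply the It\^o formula of Lemma~\ref{lem:appliItoformula} (extended to $\tauqext$) to $\Phi_{\varphi}\circ\pi$, read off the drift as $\frac{1}{\eps^{2}}\tgenonqtf+\sgenonqtf$ and the stochastic integrand as $F\cdot\partial_{k}\qtfr$, and then verify square-integrability on $\{u\le\stopt\}$ by boundedness on $\bar{\Ssp}$. Your additional computation of $\partial_{k}\qtfr$ via $\partial_{\eta(0)}(\Phi e^{Bs}\la\eta,\Psi\ra)=\Phi e^{Bs}\Psiz=T(s)\pih\Ind$ and the time-change bookkeeping are more detail than the paper supplies, but they are accurate and exactly what the paper's one-line proof is abbreviating.
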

\begin{proof}
Similar to the proof of proposition \ref{prop:Hmart}. 
\end{proof}

\begin{prop}\label{prop:pre_projdiffgoestozero}
Assume $\varphi \in C^2(\bar{\Ssp})$. Then
$$\expteps\left| \int_0^{t\wedge \stopt} \left\{ \varphi(\pi \hat{\pj}^{\eps}_s \Xeps)-(\Aavg \varphi)([\pi \hat{\pj}^{\eps}_s \Xeps])\right\}\,ds\right|\,\,\leq\,\, C(1+t)\eps^2||\varphi||_{C^2(\bar{\Ssp})}.$$
\end{prop}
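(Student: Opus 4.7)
The plan is to read the target inequality as a direct corollary of Lemmas~\ref{lem:careful} and \ref{lem:careful2}. By \eqref{eq:BPhipi}, the integrand is exactly $(\tgenonqtf(\Phi_\varphi\circ\pi))(\hat{\pj}^\eps_s\Xeps)$, so by rearranging \eqref{eq:apprxdiffapplySV4QTF} we get the identity
\begin{align*}
\int_0^{t\wedge\stopt}\!\!\bigl\{\varphi(\pi\hat{\pj}^\eps_s\Xeps)-(\Aavg\varphi)([\pi\hat{\pj}^\eps_s\Xeps])\bigr\}\,ds
&=\eps^2\bigl\{\Phi_\varphi(\pi\hat{\pj}^\eps_{t\wedge\stopt}\Xeps)-\Phi_\varphi(\pi\hat{\pj}^\eps_0\Xeps)\bigr\} \\
&\quad -\eps^2\int_0^{t\wedge\stopt}(\sgenonqtf(\Phi_\varphi\circ\pi))(\hat{\pj}^\eps_u\Xeps)\,du-\eps^2 M_t.
\end{align*}
So the target reduces to estimating the three terms on the right, each of which carries an explicit $\eps^2$ prefactor; the content of the proposition is then just that the remaining factors can be controlled by $\|\varphi\|_{C^2(\bar\Ssp)}$ and grow at most linearly in $t$.

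For the boundary term I would use that $\Phi_\varphi\colon P_\Lambda\to\R$ is a continuous time-average, so $\|\Phi_\varphi\|_{C(\bar\Ssp)}\le\|\varphi\|_{C(\bar\Ssp)}$, giving a contribution of at most $2\eps^2\|\varphi\|_{C(\bar\Ssp)}$. For the drift correction I invoke the bound $\|(\sgenonqtf(\Phi_\varphi\circ\pi))^{(i)}\|_{C(\bar\Ssp)}\le C\|\Phi_\varphi\|_{C^2(\bar\Ssp)}$ from Lemma~\ref{lem:careful}, combined with \eqref{eq:usefulPhiphibound} ($\|\Phi_\varphi\|_{C^2}\le C\|\varphi\|_{C^2}$), yielding a contribution $\le\eps^2 C t \|\varphi\|_{C^2(\bar\Ssp)}$.

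The martingale term I handle by Jensen's inequality and the Itô isometry: $\expteps|M_t|\le(\expteps\la M\ra_t)^{1/2}$. The quadratic variation density from Lemma~\ref{lem:careful2} is
$\bigl(\sigma\,\tfrac{1}{\timepr}\int_0^{\timepr}s\,((T(s)\pih\Ind).\nabla)\varphi(\Phi e^{Bs}\la\pi\hat{\pj}^\eps_u\Xeps,\Psi\ra)\,ds\bigr)^2,$
and $s\mapsto T(s)\pih\Ind$ is bounded on $[0,\timepr]$ since $P_\Lambda$ is finite dimensional and $T$ acts there as a rotation; because we have stopped at $\stopt$, $\pi\hat{\pj}^\eps_u\Xeps$ stays in $\bar\Ssp$ and so the Gateaux derivative is bounded by $C\|\varphi\|_{C^1(\bar\Ssp)}$. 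Consequently $\expteps\la M\rangle_t\le Ct\|\varphi\|_{C^1(\bar\Ssp)}^2$, hence $\eps^2\expteps|M_t|\le\eps^2 C\sqrt{t}\,\|\varphi\|_{C^1(\bar\Ssp)}$.

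Collecting the three pieces, each bounded by $\eps^2$ times a factor $\le C(1+t)\|\varphi\|_{C^2(\bar\Ssp)}$ (using $\sqrt t\le 1+t$), gives the stated bound. There is no real obstacle here---essentially all the work has been done in Lemmas~\ref{lem:func_flow_der_req}--\ref{lem:careful2}, whose point is precisely to set up this integration-by-parts-in-time trick. The only mild subtlety is making sure the quadratic variation estimate uses only $C^1$ control on $\varphi$ and only values at stopped points in $\bar\Ssp$, which is handled by the stopping time $\stopt$.
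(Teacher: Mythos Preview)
Your approach is the same as the paper's, and the boundary and martingale estimates are fine. There is, however, a genuine gap in your treatment of the drift correction $\int_0^{t\wedge\stopt}(\sgenonqtf(\Phi_\varphi\circ\pi))(\hat{\pj}^\eps_u\Xeps)\,du$. You invoke the bound $\|(\sgenonqtf(\Phi_\varphi\circ\pi))^{(i)}\|_{C(\bar\Ssp)}\le C\|\Phi_\varphi\|_{C^2(\bar\Ssp)}$ from Lemma~\ref{lem:careful}, but that sup-norm is taken over $\bar\Ssp\subset P_\Lambda$, whereas the integrand is evaluated at $\hat{\pj}^\eps_u\Xeps$, which is \emph{not} in $P_\Lambda$. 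The stopping time $\stopt$ controls only $\pi\hat{\pj}^\eps_u\Xeps$, not the full segment. Concretely, $(\sgenonqtf(\Phi_\varphi\circ\pi))^{(2)}(\eta)$ contains the factor $G(\eta)$, and $G$ has linear growth; there is no a~priori bound on $G(\hat{\pj}^\eps_u\Xeps)$ from $\pi\hat{\pj}^\eps_u\Xeps\in\bar\Ssp$ alone.

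The fix is exactly what the paper does: note that $(\sgenonqtf(\Phi_\varphi\circ\pi))^{(1)}(\eta)$ depends only on $\pi\eta$ (since $F\equiv\sigma$), so your bound applies to it directly; for the $G$-term, split
\[
(\sgenonqtf(\Phi_\varphi\circ\pi))^{(2)}(\hat{\pj}^\eps_u\Xeps)
=(\sgenonqtf(\Phi_\varphi\circ\pi))^{(2)}(\pi\hat{\pj}^\eps_u\Xeps)
+\bigl[(\sgenonqtf(\Phi_\varphi\circ\pi))^{(2)}(\hat{\pj}^\eps_u\Xeps)-(\sgenonqtf(\Phi_\varphi\circ\pi))^{(2)}(\pi\hat{\pj}^\eps_u\Xeps)\bigr],
\]
bound the first piece by the $C(\bar\Ssp)$ norm, and bound the second via the Lipschitz condition on $G$ by $K_G\|(I-\pi)\hat{\pj}^\eps_u\Xeps\|\cdot C\|\Phi_\varphi\|_{C^2(\bar\Ssp)}$. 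Then invoke \eqref{eq:stablespaceestimate} from Proposition~\ref{prop:stablenormtozero} to get $\expt\int_0^{t\wedge\stopt}\|(I-\pi)\hat{\pj}^\eps_u\Xeps\|\,du\le C(t\eps+\eps^2)$, which after multiplying by the $\eps^2$ prefactor is absorbed into the stated bound. Without this step the estimate is not justified.
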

\begin{proof}
We start with equation \eqref{eq:apprxdiffapplySV4QTF}. The first term on the RHS, using \eqref{eq:BPhipi}, gives
$$\expteps\left| \int_0^{t\wedge \stopt}(\tgenonqtf (\Phi_{\varphi}\circ \pi))(\hat{\pj}^{\eps}_{u} \Xeps)\,du \right|=\expteps\left| \int_0^{t\wedge \stopt} \left\{ \varphi(\pi \hat{\pj}^{\eps}_s \Xeps)-(\Aavg \varphi)([\pi \hat{\pj}^{\eps}_s \Xeps])\right\}\,ds\right|.$$

Now the second term on RHS of \eqref{eq:apprxdiffapplySV4QTF}. Because $F(\eta)=\sigma$ for all $\eta \in \C$, we have $(\sgenonqtf (\Phi_{\varphi}\circ \pi))^{(1)}(\hat{\pj}^{\eps}_{u} \Xeps)=(\sgenonqtf (\Phi_{\varphi}\circ \pi))^{(1)}(\pi\hat{\pj}^{\eps}_{u} \Xeps)$. Using $||(\sgenonqtf (\Phi_{\varphi}\circ \pi))^{(1)}||_{C(\bar{\Ssp})}\leq C ||\Phi_{\varphi}||_{C^2(\bar{\Ssp})}$ from lemma \ref{lem:careful}, we have that
$$\expteps\left|\int_0^{t\wedge \stopt}(\sgenonqtf (\Phi_{\varphi}\circ \pi))^{(1)}(\hat{\pj}^{\eps}_{u} \Xeps)\,du \right|\leq Ct||\Phi_{\varphi}||_{C^2(\bar{\Ssp})}.$$ 
Also,
\begin{align*}
\expteps & \left|\int_0^{t\wedge \stopt}(\sgenonqtf (\Phi_{\varphi}\circ \pi))^{(2)}(\hat{\pj}^{\eps}_{u} \Xeps)\,du \right|\,\,\,\\
&\leq\,\,\, \expteps\left|\int_0^{t\wedge \stopt}(\sgenonqtf (\Phi_{\varphi}\circ \pi))^{(2)}(\pi\hat{\pj}^{\eps}_{u} \Xeps)\,du \right| \\ 
&\qquad \qquad +\expteps\int_0^{t\wedge \stopt}\left|(\sgenonqtf (\Phi_{\varphi}\circ \pi))^{(2)}(\hat{\pj}^{\eps}_{u} \Xeps)-(\sgenonqtf (\Phi_{\varphi}\circ \pi))^{(2)}(\pi\hat{\pj}^{\eps}_{u} \Xeps) \right|\,du.
\end{align*}
For the first term on the RHS above, we use $||(\sgenonqtf (\Phi_{\varphi}\circ \pi))^{(2)}||_{C(\bar{\Ssp})}\leq C ||\Phi_{\varphi}||_{C^2(\bar{\Ssp})}$. The second term, using the Lipshitz condition \eqref{eq:FLipcond} on $G$, can be bounded above by
$$\expteps \int_0^{t\wedge \stopt}K_G\,||(I-\pi)\hat{\pj}^{\eps}_{u} \Xeps||\, C||\Phi_{\varphi}||_{C^2(\bar{\Ssp})}\,du\,\,\,\leq\,\,C||\Phi_{\varphi}||_{C^2(\bar{\Ssp})}\expteps \int_0^{t\wedge \stopt}||(I-\pi)\hat{\pj}^{\eps}_{u} \Xeps||\,du,$$
which, after making use of \eqref{eq:stablespaceestimate} from proposition \ref{prop:stablenormtozero}, can be bounded by 
$C||\Phi_{\varphi}||_{C^2(\bar{\Ssp})}(t\eps+\eps^2)$.

Using the Burkholder-Davis-Gundy inequality (theorem 3.3.28 in \cite{KaratzasShreeve}) for estimating $\expteps|M_{t}|$, and then using \eqref{eq:usefulPhiphibound}, we have the desired result. 
\end{proof}

\begin{lem}\label{lem:projdiffgoestozero}
$$\lim_{\eps \to 0}\expteps \left| \int_0^{t\wedge \stopt} \left\{ (\sgenonqtf (f^\eps \circ \ham))(\pi \hat{\pj}^{\eps}_s \Xeps)-(\igen_H f_H)(\ham(\hat{\pj}^{\eps}_s \Xeps))\right\}\,ds \right|=0.$$
\end{lem}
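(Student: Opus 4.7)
The plan is to insert $(\igenH f^\eps)(\ham(\hat{\pj}^{\eps}_s\Xeps))$ as an intermediate quantity between the two terms in the integrand. The key preliminary observation is the pointwise averaging identity
\begin{equation}\label{eq:planavgidentity}
(\igenH f^\eps)(\ham(\eta)) \,\,=\,\, \Aavg\big(\sgenonqtf(f^\eps \circ \ham)\big)([\eta]), \qquad \eta \in P_{\Lambda},
\end{equation}
which I would establish directly from the definitions: for $\eta \in P_\Lambda$ one has $\ham(T(s)\eta)=\ham(\eta)$, so averaging the explicit formula of proposition \ref{prop:hofHmart} (with $\htf=f^\eps$) over one period $\timepr$ pulls $(f^\eps)'|_{\ham(\eta)}$ and $(f^\eps)''|_{\ham(\eta)}$ outside the integral, and the remaining integrals are precisely $b_H(\ham(\eta))$ and $\sigma_H^2(\ham(\eta))$ as computed in \eqref{eq:avgdr1}--\eqref{eq:avgdiff}. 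I would also use the elementary fact $\ham(\pi\eta)=\ham(\eta)$, which follows from $\la \pi\eta,\Psi\ra=\la \eta,\Psi\ra$.

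With \eqref{eq:planavgidentity} in hand, I split the integrand as
\begin{align*}
(\sgenonqtf (f^\eps \circ \ham))(\pi \hat{\pj}^{\eps}_s \Xeps)&-(\igenH f_H)(\ham(\hat{\pj}^{\eps}_s \Xeps))  \\
&=\,\,\Big[(\sgenonqtf (f^\eps \circ \ham))(\pi \hat{\pj}^{\eps}_s \Xeps)-\Aavg\big(\sgenonqtf(f^\eps\circ\ham)\big)([\pi\hat{\pj}^{\eps}_s\Xeps])\Big] \\
&\quad +\,\,\Big[(\igenH f^\eps)(\ham(\hat{\pj}^{\eps}_s\Xeps))-(\igenH f_H)(\ham(\hat{\pj}^{\eps}_s\Xeps))\Big].
\end{align*}
The second bracket integrates to at most $t\,||\igenH f^\eps-\igenH f_H||_{C([0,H^*])}$, which vanishes as $\eps\to 0$ by lemma \ref{lem:fHapprox}(2). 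For the first bracket I would apply proposition \ref{prop:pre_projdiffgoestozero} with the test function $\varphi:=\sgenonqtf(f^\eps\circ\ham)\big|_{P_{\Lambda}}$. Since $G\in C^2$ and $f^\eps\in C^4([0,H^*])$, this $\varphi$ lies in $C^2(\bar{\Ssp})$, and a short chain-rule computation together with lemma \ref{lem:fHapprox}(3) gives $||\varphi||_{C^2(\bar{\Ssp})}\leq C\eps^{-\nu}$. Proposition \ref{prop:pre_projdiffgoestozero} then yields the expectation bound
\[
\expteps\Big|\int_0^{t\wedge\stopt}\big\{\varphi(\pi\hat{\pj}^{\eps}_s\Xeps)-(\Aavg\varphi)([\pi\hat{\pj}^{\eps}_s\Xeps])\big\}\,ds\Big|\,\,\leq\,\,C(1+t)\,\eps^{2-\nu},
\]
which tends to zero because $\nu<1$.

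The main technical tension, and where the earlier machinery was designed to help, lies in the competing scales: proposition \ref{prop:pre_projdiffgoestozero} gains a factor $\eps^2$ but pays the $C^2$-norm of the test function, while the mollification scheme of lemma \ref{lem:fHapprox} permits only a controlled blow-up $\eps^{-\nu}$ of that norm for $\nu<1$. The identity \eqref{eq:planavgidentity} is what makes this trade-off usable by converting the quantity $(\igenH f_H)(\ham(\hat{\pj}^{\eps}_s\Xeps))$, which sits at the boundary of what one can hit with a second derivative of $f_H$, into a genuine orbit average of the smooth approximation $f^\eps$. Once the split is made, both resulting terms are $o(1)$ by independent, already proved, ingredients.
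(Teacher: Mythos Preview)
Your proposal is correct and follows essentially the same route as the paper: you insert $(\igenH f^\eps)(\ham(\cdot))$ as an intermediate via the averaging identity, apply proposition \ref{prop:pre_projdiffgoestozero} to $\varphi=\sgenonqtf(f^\eps\circ\ham)$ together with the $C^2$-bound $\|\varphi\|_{C^2(\bar{\Ssp})}\leq C\eps^{-\nu}$ from lemma \ref{lem:fHapprox}(3), and then close with lemma \ref{lem:fHapprox}(2). The paper's proof is the same sequence of steps, only more tersely written.
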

\begin{proof}
By lemma \ref{lem:fHapprox}, $\sup_{\eps>0}\eps^\nu ||f^\eps \circ \ham||_{C^4(\bar{\Ssp})}<\infty$. And $G$ is a $C^2$ function. Hence $||\sgenonqtf (f^\eps \circ \ham)||_{C^2(\bar{\Ssp})}\leq C\eps^{-\nu}$. Applying proposition \ref{prop:pre_projdiffgoestozero} to $(\sgenonqtf (f^\eps \circ \ham))\circ \pi$ we have
$$\expteps\left| \int_0^{t\wedge \stopt} \left\{(\sgenonqtf (f^\eps \circ \ham))(\pi \hat{\pj}^{\eps}_s \Xeps)-(\Aavg (\sgenonqtf (f^\eps \circ \ham)))([\pi \hat{\pj}^{\eps}_s \Xeps])\right\}\,ds\right| \leq \eps^{{2}} C(1+t)\eps^{-\nu}.$$
Noting that, for $\eta \in \Ssp$ with $\ham(\eta)=\hlev$,
$$(\Aavg (\sgenonqtf (f^\eps \circ \ham)))([\eta])=(\igenH f^\eps)(\hlev),$$
we have
$$\expteps\left| \int_0^{t\wedge \stopt} \left\{(\sgenonqtf (f^\eps \circ \ham))(\pi \hat{\pj}^{\eps}_s \Xeps)-(\igen_H f^\eps)(\ham(\hat{\pj}^{\eps}_s \Xeps))\right\}\,ds\right| \leq  C(1+t)\eps^{{2}-\nu}.$$
Now using part 2 of lemma \ref{lem:fHapprox} and the fact that $\nu$ was chosen to be less than one (see statement before lemma \ref{lem:fHapprox}) gives the desired result. 
\end{proof}

\begin{lem}\label{lem:diffgoestozero}
$$\lim_{\eps \to 0}\expteps \left| \int_0^{t\wedge \stopt} \left\{ (\sgenonqtf (f^\eps \circ \ham))(\hat{\pj}^{\eps}_s \Xeps)-(\sgenonqtf (f^\eps \circ \ham))(\pi \hat{\pj}^{\eps}_s \Xeps)\right\}\,ds \right|=0.$$
\end{lem}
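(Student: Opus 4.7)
The plan is to exploit three structural simplifications so that the difference of the two generator applications collapses to a single term controlled by $\|(I-\pi)\hat{\pj}^{\eps}_s\Xeps\|$. First, since $\pi$ is a projection and $\la \Phi_i,\psi_j\ra=\delta_{ij}$, for any $\eta \in \C$ (or $\Ch$) we have
$$\la \pi\eta,\Psi\ra \,=\, \la \Phi\la \eta,\Psi\ra,\Psi\ra \,=\, \la \eta,\Psi\ra, \qquad \text{hence} \qquad \ham(\eta)=\ham(\pi\eta).$$
Second, since $F(\eta)\equiv \sigma$ is constant, $F^2(\eta)=F^2(\pi\eta)$. Applying the formula for $\sgenonqtf(f^\eps\circ \ham)$ from Proposition \ref{prop:hofHmart} at both $\eta$ and $\pi\eta$ and subtracting, every term involving $F^2$ or $(f^\eps)',(f^\eps)''$ evaluated at $\ham$ cancels except through the factor in front of $G$, giving
$$(\sgenonqtf(f^\eps\circ \ham))(\eta)-(\sgenonqtf(f^\eps\circ \ham))(\pi\eta)\,=\,\bigl[G(\eta)-G(\pi\eta)\bigr]\,(f^\eps)'\!\bigl(\ham(\eta)\bigr)\,\Psiz^*\la \eta,\Psi\ra.$$

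With this identity in hand I would bound each factor. By the Lipschitz assumption \eqref{eq:FLipcond} on $G$,
$$|G(\hat{\pj}^{\eps}_s\Xeps)-G(\pi\hat{\pj}^{\eps}_s\Xeps)|\,\leq\, K_G\,\|(I-\pi)\hat{\pj}^{\eps}_s\Xeps\|.$$
For $s\in [0,\stopt]$ one has $\ham(\pi\hat{\pj}^{\eps}_s\Xeps)\leq H^*$, and since $\ham(\hat{\pj}^{\eps}_s\Xeps)=\ham(\pi\hat{\pj}^{\eps}_s\Xeps)$ the factor $|\Psiz^*\la \hat{\pj}^{\eps}_s\Xeps,\Psi\ra|=|\Psiz^*\la \pi\hat{\pj}^{\eps}_s\Xeps,\Psi\ra|$ is bounded by $\sqrt{2H^*\,\Psiz^*\Psiz}$. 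For the derivative factor, Part 3 of Lemma \ref{lem:fHapprox} yields $\|(f^\eps)'\|_{C([0,H^*])}\leq C\eps^{-\nu}$ (more directly, the explicit formula \eqref{eq:usefulinuniqueness} combined with $\sigma_H^2(\hlev)=\hlev\consa_d$ shows $\|(f^\eps)'\|_\infty\leq C\|\bar{\poiF}^\eps\|_\infty$, which is uniformly bounded in $\eps$, but the cruder $O(\eps^{-\nu})$ bound suffices).

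Combining, the quantity whose limit we seek is bounded above by
$$C\,\eps^{-\nu}\,K_G\,\sqrt{2H^*\,\Psiz^*\Psiz}\;\expteps\!\int_0^{t\wedge \stopt}\|(I-\pi)\hat{\pj}^{\eps}_s\Xeps\|\,ds,$$
and Proposition \ref{prop:stablenormtozero} (applied with any $\nu<1$) gives $\expteps\int_0^{t\wedge \stopt}\|(I-\pi)\hat{\pj}^{\eps}_s\Xeps\|\,ds=o(\eps^{\nu})$, so the product tends to zero as $\eps\downarrow 0$. The only conceptual step is the algebraic cancellation in the first paragraph; after that the proof is a direct application of Lipschitz continuity together with the stable-projection estimate, and no genuine obstacle arises.
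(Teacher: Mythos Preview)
Your proof is correct and follows essentially the same route as the paper: you identify the cancellation $(\sgenonqtf(f^\eps\circ\ham))(\eta)-(\sgenonqtf(f^\eps\circ\ham))(\pi\eta)=[G(\eta)-G(\pi\eta)](f^\eps)'(\ham(\eta))\Psiz^*\la\eta,\Psi\ra$ (which the paper presents as the splitting into $(\sgenonqtf)^{(1)}$ and $(\sgenonqtf)^{(2)}$), then apply the Lipschitz bound on $G$, the a priori bound on $|\Psiz^*\la\eta,\Psi\ra|$, the derivative control from Lemma~\ref{lem:fHapprox}, and Proposition~\ref{prop:stablenormtozero}. The only cosmetic difference is that the paper invokes the sharper bound $\sup_{\eps>0}\eps^{\nu/4}\|f^\eps\circ\ham\|_{C^1(\bar\Ssp)}<\infty$ (implicit in the mollification scale) rather than the cruder $O(\eps^{-\nu})$ you use, but since Proposition~\ref{prop:stablenormtozero} covers any exponent below $1$ either choice suffices.
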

\begin{proof}
Recall from proposition \ref{prop:hofHmart} that $(\sgenonqtf (f^\eps \circ \ham))(\eta)=(\sgenonqtf (f^\eps \circ \ham))^{(1)}(\eta)+(\sgenonqtf (f^\eps \circ \ham))^{(2)}(\eta)$ where
\begin{align*}
(\sgenonqtf (f^\eps \circ \ham))^{(1)}(\eta)&=\frac12 F^2(\eta)\bigg( \dot{f}^{\eps}\big|_{\ham(\eta)}\Psiz^*\Psiz \,\,+\,\,\ddot{f}^{\eps}\big|_{\ham(\eta)}\left( \Psiz^*\la \eta,\Psi\ra\right)^2 \bigg), \\
(\sgenonqtf (f^\eps \circ \ham))^{(2)}(\eta)&=G(\eta)\dot{f}^{\eps}\big|_{\ham(\eta)}\Psiz^*\la \eta,\Psi\ra.
\end{align*}
As noted before $(\sgenonqtf (f^\eps \circ \ham))^{(1)}(\eta)=(\sgenonqtf (f^\eps \circ \ham))^{(1)}(\pi \,\eta)$ because $F(\eta)=\sigma$ for all $\eta \in \C$.

Using the Lipshitz condition \eqref{eq:FLipcond} and that  $|\Psiz^*\la \eta,\Psi\ra|\leq \sqrt{2H^*}\sqrt{\Psiz^*\Psiz}$ for $0\leq s\leq t\wedge \stopt$,
we have
$$|(\sgenonqtf (f^\eps \circ \ham))^{(2)}(\hat{\pj}^{\eps}_s \Xeps)-(\sgenonqtf (f^\eps \circ \ham))^{(2)}(\pi \hat{\pj}^{\eps}_s \Xeps)|\,\,\leq\,\, C\,||(I-\pi)\hat{\pj}^{\eps}_s \Xeps||\,\,|\dot{f}^{\eps}(\ham(\eta))|.$$
Using $\sup_{\eps>0}\eps^{\nu/4} ||f^\eps \circ \ham||_{C^1(\bar{\Ssp})}<\infty$, it is enough to show that, for some $\nu<1$,
$$\lim_{\eps \to 0}\eps^{-\nu/4}\expteps  \int_0^{t\wedge \stopt} ||(I-\pi)\hat{\pj}^{\eps}_s \Xeps||\,ds =0.$$
Application of proposition \ref{prop:stablenormtozero} yields the desired result.
\end{proof}

\begin{prop}\label{prop:diffgoestozero_properapprx}
$$\lim_{\eps \to 0}\expt \left| \int_0^{t\wedge \stopt} \left\{ (\sgenonqtf (f^\eps \circ \ham))(\hat{\pj}^{\eps}_s \Xeps)-(\igen_H f_H)(\ham(\hat{\pj}^{\eps}_s \Xeps))\right\}\,ds \right|=0.$$
\end{prop}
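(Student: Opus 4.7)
The plan is to use the decomposition written out explicitly at equation \eqref{eq:justsplit}, namely
\begin{align*}
(\sgenonqtf (f^\eps \circ \ham))(\hat{\pj}^{\eps}_s\Xeps) &- (\igenH f_H)(\ham(\hat{\pj}^{\eps}_s\Xeps)) \\
&= \bigl[(\sgenonqtf (f^\eps \circ \ham))(\hat{\pj}^{\eps}_s\Xeps)-(\sgenonqtf (f^\eps \circ \ham))(\pi\hat{\pj}^{\eps}_s\Xeps)\bigr] \\
&\quad + \bigl[(\sgenonqtf (f^\eps \circ \ham))(\pi\hat{\pj}^{\eps}_s\Xeps)-(\igenH f_H)(\ham(\hat{\pj}^{\eps}_s\Xeps))\bigr],
\end{align*}
and then to apply the triangle inequality inside the time integral and the expectation. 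All the real work has already been done in the two preceding lemmas.

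First I would bound the contribution of the first bracket by invoking Lemma \ref{lem:diffgoestozero}, which controls the discrepancy between $\sgenonqtf(f^\eps\circ \ham)$ evaluated at $\hat{\pj}^{\eps}_s\Xeps$ and at its projection onto $P_{\Lambda}$. That lemma reduces the problem to showing $\eps^{-\nu/4}\expt\int_0^{t\wedge \stopt} \|(I-\pi)\hat{\pj}^{\eps}_s\Xeps\|\,ds \to 0$, which is precisely Proposition \ref{prop:stablenormtozero}; hence this contribution vanishes as $\eps\to 0$.

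Next I would bound the contribution of the second bracket by invoking Lemma \ref{lem:projdiffgoestozero}, which identifies the averaged operator $\Aavg(\sgenonqtf(f^\eps\circ\ham))$ with $\igenH f^\eps$, then appeals to Proposition \ref{prop:pre_projdiffgoestozero} together with the bound $\|\sgenonqtf(f^\eps\circ\ham)\|_{C^2(\bar{\Ssp})}\leq C\eps^{-\nu}$ from Lemma \ref{lem:fHapprox} to get an $O(\eps^{2-\nu})$ estimate, and finally uses the $C([0,H^*])$-convergence $\igenH f^\eps\to \igenH f_H$ from part (2) of Lemma \ref{lem:fHapprox} to replace $f^\eps$ by $f_H$ under the integral.

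Combining the two, I would conclude by triangle inequality: given $\delta>0$, pick $\eps$ small enough so that each of the two bounds above is at most $\delta/2$. There is essentially no obstacle; the entire substance of the statement was already absorbed into Lemmas \ref{lem:projdiffgoestozero} and \ref{lem:diffgoestozero} and into the choice $\nu<1$ made before Lemma \ref{lem:fHapprox}, so that the powers $\eps^{2-\nu}$ and $\eps^{1-\nu/4}$ (or similar) both tend to zero. The only point requiring mild care is to verify that the $t\wedge \stopt$ truncation and the uniform bound $\ham(\pi\hat{\pj}^{\eps}_s\Xeps)\leq H^\ast$ on $[0,\stopt]$ make all the constants appearing in the two lemmas independent of $\eps$, which is evident from their statements.
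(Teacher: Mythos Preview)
Your proposal is correct and is exactly the paper's approach: the paper's proof reads in its entirety ``Combine lemmas \ref{lem:projdiffgoestozero} and \ref{lem:diffgoestozero},'' and you have simply spelled out that combination via the decomposition \eqref{eq:justsplit} and the triangle inequality.
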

\begin{proof}
Combine lemmas \ref{lem:projdiffgoestozero} and \ref{lem:diffgoestozero}.
\end{proof}


\section{Proof of proposition \ref{prop:mainresultaddnoise}}\label{sec:mainresproof}
Following the remark \ref{rmk:mainproofmotiv} we first prove the tightness of the sequence of laws of $\hprc^\eps$.

\begin{prop}\label{prop:tightness}
There exists a constant $0<C<\infty$ (independent of $\eps$) such that, for any $0\leq t_1\leq t_2\leq T$,
$$\expt  \,\, |\hprc^\eps(t_2\wedge \stopt)-\hprc^\eps(t_1\wedge \stopt)|^4\,\,\,\leq \,\,\,C|t_2-t_1|^2. $$
Thus the laws of $\hprc^\eps$ are tight (see theorem 12.3 of \cite{BillingsleyBook}). 
\end{prop}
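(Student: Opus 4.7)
The plan is to use the martingale decomposition from Proposition \ref{prop:Hmart}, split the increment into a drift part and a martingale part, and bound each separately. By Proposition \ref{prop:Hmart},
\begin{align*}
\hprc^\eps(t_2\wedge\stopt)-\hprc^\eps(t_1\wedge\stopt) \,=\, \int_{t_1\wedge\stopt}^{t_2\wedge\stopt}(\sgenonqtf\ham)(\hat{\pj}^\eps_u\Xeps)\,du \,+\, (M^\eps_{t_2}-M^\eps_{t_1}).
\end{align*}
Using $(a+b)^4\le 8(a^4+b^4)$, it suffices to estimate each piece. The key elementary observation is that $\la\eta,\Psi\ra=\la\pi\eta,\Psi\ra$ (since $Q_\Lambda=\ker\pi$), so on $[0,\stopt]$ the fact that $\pi\hat{\pj}^\eps_s\Xeps\in\bar{\Ssp}$ gives the deterministic bound $|\Psiz^*\la\hat{\pj}^\eps_s\Xeps,\Psi\ra|\le\sqrt{2H^*\,\Psiz^*\Psiz}$.

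The martingale part is easy: by Burkholder--Davis--Gundy and the QV formula in Proposition \ref{prop:Hmart},
\begin{align*}
\expt|M^\eps_{t_2}-M^\eps_{t_1}|^4 \,\le\, C\,\expt\!\left(\int_{t_1\wedge\stopt}^{t_2\wedge\stopt}Q(\hat{\pj}^\eps_s\Xeps)\,ds\right)^2 \,\le\, C\,(2H^*\sigma^2\Psiz^*\Psiz)^2\,|t_2-t_1|^2,
\end{align*}
since $Q(\eta)=(\sigma\Psiz^*\la\eta,\Psi\ra)^2$ is uniformly bounded on $\{s\le\stopt\}$. For the drift, Jensen (or Hölder) gives
\begin{align*}
\expt\Big|\!\int_{t_1\wedge\stopt}^{t_2\wedge\stopt}\!\!(\sgenonqtf\ham)(\hat{\pj}^\eps_u\Xeps)\,du\Big|^4 \,\le\, |t_2-t_1|^3\int_{t_1}^{t_2}\expt\big[|(\sgenonqtf\ham)(\hat{\pj}^\eps_u\Xeps)|^4\mathbf{1}_{\{u\le\stopt\}}\big]\,du.
\end{align*}
Using the formula $(\sgenonqtf\ham)(\eta)=G(\eta)\Psiz^*\la\eta,\Psi\ra+\tfrac12\sigma^2\Psiz^*\Psiz$ together with the boundedness of $\Psiz^*\la\eta,\Psi\ra$ noted above, the integrand is at most $C(1+|G(\hat{\pj}^\eps_u\Xeps)|^4)\mathbf{1}_{\{u\le\stopt\}}$, and by the Lipschitz/linear-growth bound \eqref{eq:FGgrowthforunique},
\begin{align*}
|G(\eta)|^4\,\le\,C(1+\|\eta\|^4)\,\le\,C(1+\|\pi\eta\|^4+\|(I-\pi)\eta\|^4).
\end{align*}
On $\{u\le\stopt\}$, $\|\pi\hat{\pj}^\eps_u\Xeps\|$ is deterministically bounded in terms of $H^*$, so the whole drift contribution reduces to proving
\begin{equation*}
\sup_{\eps>0}\,\sup_{u\in[0,T]}\,\expt\big[\|(I-\pi)\hat{\pj}^\eps_u\Xeps\|^4\,\mathbf{1}_{\{u\le\stopt\}}\big] \,<\,\infty. \tag{$\star$}
\end{equation*}

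Granted $(\star)$, one finds $\expt|\text{drift}|^4\le C|t_2-t_1|^4\le CT^2|t_2-t_1|^2$, so that combining with the martingale estimate yields the required bound $\expt|\hprc^\eps(t_2\wedge\stopt)-\hprc^\eps(t_1\wedge\stopt)|^4\le C(1+T^2)|t_2-t_1|^2$, and tightness follows from Theorem 12.3 of \cite{BillingsleyBook}. The remaining obstacle is thus $(\star)$; this is delicate because the drift of \eqref{eq:main_in_intgl_form_timesclaechange_drophat} has Lipschitz constant of order $\eps^{-2}$, so one cannot Gronwall directly from the SDE. Instead, I would proceed exactly as in Proposition \ref{add:newprop:supboundGron}, applying the $Q_\Lambda$-projection of the variation-of-constants formula in Proposition \ref{prop:vocformula}: the deterministic exponential decay of $\Th$ on $\hat Q_\Lambda$ (Lemma \ref{lem:collec_ext_res}(5)) handles the initial-data and drift contributions via Cauchy--Schwarz and an $L^4$ Gronwall, while the fourth moment of the stochastic convolution $\|\int_0^u\Th(\tfrac{u-s}{\eps^2})(I-\pih)\Ind\,\sigma dW(s)\|$ is controlled by lifting the decomposition of Lemma \ref{add:newlem:bound} to $L^4$ and invoking Burkholder--Davis--Gundy (together with the exponential decay of $\stabsol$ and $\stabsol'$). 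The bound obtained is in fact $O(\eps^4)+O(\|\xi\|^4 e^{-4\kappa u/\eps^2})$, which is more than enough for $(\star)$.
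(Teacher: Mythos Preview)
Your proposal is correct and follows essentially the same route as the paper: the martingale decomposition of Proposition~\ref{prop:Hmart}, Burkholder--Davis--Gundy for the martingale increment, H\"older/Jensen for the drift, and reduction to a uniform fourth-moment bound on $\|(I-\pi)\hat{\pj}^{\eps}_u\Xeps\|$ via the variation-of-constants formula. The only cosmetic difference is that the paper proves the stronger statement $\expt\sup_{t\in[0,T\wedge\stopt]}\|(I-\pi)\hat{\pj}^{\eps}_t\Xeps\|^4<\infty$ (Lemma~\ref{lem:lemreq4tightness}) by an integration-by-parts treatment of the stochastic convolution, whereas your pointwise bound $(\star)$, obtained by lifting Lemma~\ref{add:newlem:bound} to $L^4$, already suffices and is slightly leaner.
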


\begin{proof}
Recall that $\ham(\hat{\pj}^{\eps}_t\Xeps)=\hprc^\eps(t)$. We have from proposition \ref{prop:Hmart}
$$\ham(\hat{\pj}^{\eps}_{t\wedge \stopt}\Xeps)=\ham(\hat{\pj}^{\eps}_0\Xeps)+\int_0^{t\wedge \stopt}(\sgenonqtf \,\ham)(\hat{\pj}^{\eps}_u \Xeps)du + M_t^{\eps},$$
where $M_t^{\eps}$ is a $\F_t$ martingale with quadratic variation given by 
$$\la M^\eps \ra_t=\int_0^{t\wedge \stopt} Q(\hat{\pj}^{\eps}_s \Xeps)ds,  \qquad \quad Q(\eta):=\left(F(\eta)\Psiz^*\la \eta,\Psi\ra\right)^2.$$
Write $(\sgenonqtf \,\ham)(\eta)=(\sgenonqtf \,\ham)^{(1)}(\eta)+(\sgenonqtf \,\ham)^{(2)}(\eta)$ where
\begin{align*}
(\sgenonqtf  \ham)^{(1)}(\eta)=\frac12 F^2(\eta)\Psiz^*\Psiz, \qquad \quad 
(\sgenonqtf \ham)^{(2)}(\eta)=G(\eta)\Psiz^*\la \eta,\Psi\ra.
\end{align*}

Note that $\ham(\eta)=\ham(\pi\,\eta)$. Also, $Q(\eta)=Q(\pi\,\eta)$ and $(\sgenonqtf \,\ham)^{(1)}(\eta)=(\sgenonqtf \,\ham)^{(1)}(\pi\,\eta)$ (because $F(\eta)=\sigma$ $\forall \eta$). There exists constants $C_1,C_2,C_3,C_4<\infty$ such that
\begin{align*}
&C_1=\sup_{\eta \in \bar{\Ssp}}|(\sgenonqtf \,\ham)^{(1)}(\eta)|, \qquad \quad C_2=\sup_{\eta \in \bar{\Ssp}}|Q(\eta)|, \\
&C_3=\sup_{\eta \in \bar{\Ssp}}|G(\eta)\Psiz^*\la \eta,\Psi\ra|, \qquad \quad  C_4=\sup_{\eta \in \bar{\Ssp}}|\Psiz^*\la \eta,\Psi\ra|, \qquad \quad C_5=\sup_{\eta \in \bar{\Ssp}}|G(\eta)|.
\end{align*}
For $0\leq t\leq \stopt$,
\begin{align*}
|(\sgenonqtf \ham)^{(2)}(\eta)| & \leq |G(\pi\eta)\Psiz^*\la \eta,\Psi\ra|+|G(\eta)-G(\pi\eta)|.|\Psiz^*\la \eta,\Psi\ra| \\
&\leq C_3+C_4 K_G||(I-\pi)\eta||.
\end{align*}
 
Now, using Minkowski's inequality
\begin{align}
\frac{1}{64}\expt  \,\, |\hprc^\eps(t_2\wedge \stopt)-\hprc^\eps(t_1\wedge \stopt)|^4\,\,\,\leq &\,\,\,\expt  \,\,\bigg|\int_{t_1\wedge \stopt}^{t_2\wedge \stopt}(\sgenonqtf \,\ham)^{(1)}(\hat{\pj}^{\eps}_u \Xeps)du \bigg|^4 \label{eq:prftight_t1} \\
& +\expt  \,\,\bigg|\int_{t_1\wedge \stopt}^{t_2\wedge \stopt}(\sgenonqtf \,\ham)^{(2)}(\hat{\pj}^{\eps}_u \Xeps)du \bigg|^4 \label{eq:prftight_t2} \\
& +\expt  \,\,\bigg|M^\eps_{t_1}-M^\eps_{t_2}\bigg|^4. \label{eq:prftight_t3}
\end{align}
For the term in \eqref{eq:prftight_t3}, using the martingale moments inequality (see proposition 3.3.26 and remark 3.3.27 of \cite{KaratzasShreeve}), there exists a constant $C_{m}$ such that
$$\expt  \,\,\bigg|M^\eps_{t_1}-M^\eps_{t_2}\bigg|^4 \,\,\leq\,\,C_{m}\expt  \,\,\bigg|\la M^\eps \ra_{t_1}-\la M^\eps \ra_{t_2}\bigg|^2 \leq C_{m}C_2^2|t_2-t_1|^2.$$
For the term on the RHS of \eqref{eq:prftight_t1},
$$\expt  \,\,\bigg|\int_{t_1\wedge \stopt}^{t_2\wedge \stopt}(\sgenonqtf \,\ham)^{(1)}(\hat{\pj}^{\eps}_u \Xeps)du \bigg|^4 \leq \expt  \,\,\bigg|\int_{t_1\wedge \stopt}^{t_2\wedge \stopt}|(\sgenonqtf \,\ham)^{(1)}(\pi\hat{\pj}^{\eps}_u \Xeps)|\,du \bigg|^4 \leq C_1^4|t_2-t_1|^4.$$
For the term in \eqref{eq:prftight_t2},
\begin{align*}
\expt  \,\,\bigg|\int_{t_1\wedge \stopt}^{t_2\wedge \stopt}(\sgenonqtf \,\ham)^{(2)}(\hat{\pj}^{\eps}_u \Xeps)du \bigg|^4 &  \leq \expt  \,\,\bigg|C_3|t_2-t_1|+C_4 K_G\int_{t_1\wedge \stopt}^{t_2\wedge \stopt}||(I-\pi)\hat{\pj}^{\eps}_u \Xeps||\,du \bigg|^4 \\
& \leq 8 C_3^4|t_2-t_1|^4 + 8 C_4^4K_G^4 \expt  \,\,\bigg|\int_{t_1\wedge \stopt}^{t_2\wedge \stopt}||(I-\pi)\hat{\pj}^{\eps}_s \Xeps||\,ds \bigg|^4.
\end{align*}
The tightness now follows from the following lemma \ref{lem:lemreq4tightness}.
\end{proof}

\begin{lem}\label{lem:lemreq4tightness}
There exists $\eps_0>0$ and a constant $C$ independent of $\eps$, $t_1$ and $t_2$ such that, $\forall \eps \leq \eps_0$ and $0\leq t_1\leq t_2 \leq T$
\begin{align}\label{eq:prftight_req}
\expt  \,\,\bigg|\int_{t_1\wedge \stopt}^{t_2\wedge \stopt}||(I-\pi)\hat{\pj}^{\eps}_s \Xeps||\,ds \bigg|^4 \leq C|t_2-t_1|^4.
\end{align} 
\end{lem}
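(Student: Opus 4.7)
The plan is to reduce the fourth-moment bound on the integral to a uniform fourth-moment bound on the integrand $\beta^{\eps}_s := ||(I-\pi)\hat{\pj}^{\eps}_s \Xeps||$. By Jensen's inequality applied to the nonnegative integrand (Hölder with exponents $4/3$ and $4$),
\begin{align*}
\bigg|\int_{t_1\wedge\stopt}^{t_2\wedge\stopt} \beta^{\eps}_s\,ds\bigg|^4 \,\,\leq\,\, |t_2-t_1|^3 \int_{t_1\wedge\stopt}^{t_2\wedge\stopt} (\beta^{\eps}_s)^4\,ds.
\end{align*}
Taking expectation and using Fubini, it is enough to establish that
$\sup_{s\in[0,T],\, \eps\leq \eps_0} \expt[(\beta^{\eps}_s)^4 \mathbf{1}_{\{s\leq \stopt\}}] \leq C<\infty$.

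Next I would exploit the variation of constants decomposition \eqref{eq:vocnonlinstable} already used in the proof of proposition \ref{prop:stablenormtozero}. For $s\leq \stopt$, combining the deterministic estimates \eqref{eq:vocnonlinstable_esti1}--\eqref{eq:vocnonlinstable_esti2} (both of which still hold pathwise) gives
\begin{align*}
\beta^{\eps}_s \,\,\leq\,\, K\beta^{\eps}_0 e^{-\kappa s/\eps^2}+C\eps^2+C\int_0^s e^{-\kappa(s-u)/\eps^2}\beta^{\eps}_u\,du+\Upsilon^{\eps}_s,
\end{align*}
where $\Upsilon^{\eps}_s$ is the stochastic-integral norm of lemma \ref{add:newlem:supbound}. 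Raising to the fourth power (using $(a+b+c+d)^4\leq 64(a^4+b^4+c^4+d^4)$) and applying Jensen to the convolution term,
\begin{align*}
\bigg(\int_0^s e^{-\kappa(s-u)/\eps^2}\beta^{\eps}_u\,du\bigg)^4 \leq (\eps^2/\kappa)^3\int_0^s e^{-\kappa(s-u)/\eps^2}(\beta^{\eps}_u)^4\,du,
\end{align*}
and then taking expectations yields an integral inequality of the form
$\expt(\beta^{\eps}_s)^4\leq C(1+\expt|\Upsilon^{\eps}_s|^4)+C\eps^6\int_0^s e^{-\kappa(s-u)/\eps^2}\expt(\beta^{\eps}_u)^4\,du$.
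Since the kernel here has $L^1$-norm of order $\eps^8$, the Gronwall-type argument cited from \cite{Bainov} in proposition \ref{add:newprop:supboundGron} yields, for all sufficiently small $\eps$,
$\expt(\beta^{\eps}_s)^4 \leq C\big(1+\sup_{u\leq T}\expt|\Upsilon^{\eps}_u|^4\big)$.

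The remaining ingredient, and what I expect to be the main obstacle, is a uniform fourth-moment bound on the sup-norm stochastic integral, $\sup_{s,\eps}\expt|\Upsilon^{\eps}_s|^4<\infty$. Here I would reuse the pathwise decomposition \eqref{add:newlem:lhs}--\eqref{add:newlem:term5} of lemma \ref{add:newlem:bound}, which writes $\Upsilon^{\eps}_s$ as a sum of five scalar stochastic integrals (built from $\stabsol$, $\stabsol'$, and $\cos(\cdot/\eps^2)$, $\sin(\cdot/\eps^2)$). On each piece I would apply the Burkholder-Davis-Gundy inequality of order four (together with Doob's maximal inequality for the suprema in $v$) instead of the $L^1$ estimates used in the proof of \eqref{add:newlem:avgbound}. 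Using $\expt|\int_0^s f\,dW|^4\leq C(\int_0^s f^2\,du)^2$ together with the exponential decay $|\stabsol(t)|,|\stabsol'(t)|\leq C e^{-\kappa t}$ and the identity $\int_0^s e^{-2\kappa(s-u)/\eps^2}du\leq \eps^2/(2\kappa)$ produces a bound of order $\eps^4$ for each term, hence $\expt|\Upsilon^{\eps}_s|^4 \leq C\eps^4$ uniformly in $s\in[0,T]$ and small $\eps$. Combining this with the Gronwall step and the Jensen reduction above gives $\expt|\int_{t_1\wedge\stopt}^{t_2\wedge\stopt}\beta^{\eps}_s\,ds|^4\leq C|t_2-t_1|^4$, as desired. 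The technical difficulty is entirely in the sup-in-$\theta$ inside the $\hat C$-norm on $\Upsilon^{\eps}_s$, which is precisely what the five-term decomposition of lemma \ref{add:newlem:bound} was designed to remove.
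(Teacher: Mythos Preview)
Your proposal is correct and follows the same overall architecture as the paper: H\"older/Jensen reduction of the fourth moment of the time integral to a uniform fourth-moment bound on $\beta^{\eps}_s$, then the variation-of-constants decomposition \eqref{eq:vocnonlinstable}, and finally closing the loop for small $\eps$.

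There are two technical variations worth noting. First, the paper bounds the stronger quantity $\expt\sup_{t\in[0,T\wedge\stopt]}(\beta^{\eps}_t)^4$ directly and absorbs the feedback term $8K_G^4(K\eps^2/\kappa)^4\,\expt\sup_t(\beta^{\eps}_t)^4$ into the left-hand side for small $\eps$; you instead bound $\sup_s\expt[(\beta^{\eps}_s)^4\mathbf{1}_{\{s\leq\stopt\}}]$ and close via a Gronwall/absorption argument on the convolution kernel. Your route is fine, but note that you implicitly need $\sup_{s\in[0,T]}\expt[(\beta^{\eps}_s)^4\mathbf{1}_{\{s\leq\stopt\}}]<\infty$ for each fixed $\eps$ before absorbing; this follows from standard SDDE moment bounds under Lipschitz coefficients. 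Second, for the stochastic term $\Upsilon^{\eps}_s$ the paper does \emph{not} reuse lemma~\ref{add:newlem:bound}; it introduces $g(s):=(\Th(s)(I-\pih)\Ind)(-r)$ and performs a direct integration by parts in the Wiener integral over the two ranges $[0,(t+\eps^2\theta)\vee 0]$ and $[(t+\eps^2\theta)\vee 0,t]$, arriving at a constant $\tilde{C}$ that does not vanish with $\eps$. Your approach---recycling the five-term decomposition \eqref{add:newlem:lhs}--\eqref{add:newlem:term5} and upgrading the BDG estimates there to order four---actually yields the sharper $\expt|\Upsilon^{\eps}_s|^4\leq C\eps^4$, which is more than enough. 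Your choice has the economy of reusing existing machinery; the paper's choice keeps this lemma self-contained.
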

\begin{proof}
By Holder's inequality, we have
\begin{align*}
\expt  \,\,\bigg|\int_{t_1\wedge \stopt}^{t_2\wedge \stopt}||(I-\pi)\hat{\pj}^{\eps}_s \Xeps||\,ds \bigg|^4 &\leq |t_2-t_1|^3 \,\,\expt \int_{t_1\wedge \stopt}^{t_2\wedge \stopt}||(I-\pi)\hat{\pj}^{\eps}_s \Xeps||^4\,ds \\
& \leq |t_2-t_1|^4 \,\,\expt \sup_{t\in[0,T\wedge \stopt]}||(I-\pi)\hat{\pj}^{\eps}_t \Xeps||^4.
\end{align*}
We will show that there exists $\eps_0>0$ and a constant $C$ independent of $\eps$ such that, $\forall \eps \leq \eps_0$, $\expt \sup_{t\in[0,T\wedge \stopt]}||(I-\pi)\hat{\pj}^{\eps}_t \Xeps||^4 \leq C.$

Using variation-of-constants formula and Minkowski's inequality
\begin{align}
\frac{1}{64} \expt & \sup_{t\in[0,T\wedge \stopt]}  ||(I-\pi)\hat{\pj}^{\eps}_t \Xeps||^4 \,\,\,\notag \\
& \leq\,\,\, \expt \sup_{t\in[0,T\wedge \stopt]}||\Th(t/\eps^2)(I-\pi)\hat{\pj}^{\eps}_0 \Xeps||^4  \label{eq:tightsublemm_use1} \\
& \qquad \qquad + \expt \sup_{t\in[0,T\wedge \stopt]}\left|\left|\int_0^t \Th(\frac{t-u}{\eps^2})(I-\pih)\Ind G(\hat{\pj}^{\eps}_u \Xeps)\,du \right|\right|^4 \label{eq:tightsublemm_use2} \\
& \qquad \qquad + \expt \sup_{t\in[0,T\wedge \stopt]}\left|\left|\int_0^t \Th(\frac{t-u}{\eps^2})(I-\pih)\Ind \sigma\, dW(u) \right|\right|^4. \label{eq:tightsublemm_use3}
\end{align}
The term in \eqref{eq:tightsublemm_use1} is bounded above by $||(I-\pi)\hat{\pj}^{\eps}_0 \Xeps||^4$. The term in \eqref{eq:tightsublemm_use2} is bounded above by
\begin{align*}
8 \,\expt \sup_{t\in[0,T\wedge \stopt]} & \left(\int_0^t \left|\left|\Th(\frac{t-u}{\eps^2})(I-\pih)\Ind \right|\right| |G(\pi\hat{\pj}^{\eps}_u \Xeps)|\,du \right)^4 \\ 
& + \,\,\,8 K_G^4\,\expt \sup_{t\in[0,T\wedge \stopt]}\left(\int_0^t \left|\left|\Th(\frac{t-u}{\eps^2})(I-\pih)\Ind \right|\right| \, ||(I-\pi)\hat{\pj}^{\eps}_u \Xeps||\,du \right)^4 
\end{align*}
which can be bounded above by
\begin{align*}
 8 \,\expt &\sup_{t\in[0,T\wedge \stopt]}   \left(K\int_0^t e^{-\kappa(t-u)/\eps^2} |G(\pi\hat{\pj}^{\eps}_u \Xeps)|\,du \right)^4 \\
& \qquad \qquad \qquad + \,\,\,8 K_G^4\,\expt \sup_{t\in[0,T\wedge \stopt]}\left(K\int_0^t e^{-\kappa(t-u)/\eps^2} \, ||(I-\pi)\hat{\pj}^{\eps}_u \Xeps||\,du \right)^4 \\
& \leq 8 \left(K\int_0^t e^{-\kappa(t-u)/\eps^2} du \right)^4 \expt \sup_{t\in[0,T\wedge \stopt]} \left(|G(\pi\hat{\pj}^{\eps}_t \Xeps)|^4 + K_G^4\,||(I-\pi)\hat{\pj}^{\eps}_t \Xeps||^4 \right) \\
& \leq 8 C_5^4 (K\eps^2/\kappa)^4 + 8 K_G^4(K\eps^2/\kappa)^4 \expt \sup_{t\in[0,T\wedge \stopt]} ||(I-\pi)\hat{\pj}^{\eps}_t \Xeps||^4.
\end{align*}
To deal with the term in \eqref{eq:tightsublemm_use3}, define $g(s):=(\Th(s)(I-\pih)\Ind)(-r)$. Then $g(\cdot)$ is discontinuous at $s=r$. Further,
\begin{align*}
|g'(s+r)|\,\,\,&=\,\,\,\left|\frac{d}{ds}\left((\Th(s)(I-\pih)\Ind)(0)\right)\right|\,\,\,=\,\,\,\left|L_0(\Th(s)(I-\pih)\Ind) \right| \\
&\leq\,\,\, ||L_0||\,||\Th(s)(I-\pih)\Ind|| \,\,\,\leq\,\,\, ||L_0||\,Ke^{-\kappa s}.
\end{align*}
The term in \eqref{eq:tightsublemm_use3} can be written as
\begin{align*}
\expt \sup_{t\in[0,T\wedge \stopt]}\sup_{\theta \in [-r,0]}\left|\int_0^t g\left(\frac{t-u}{\eps^2}+r+\theta \right) \sigma\, dW(u) \right|^4.
\end{align*}
Writing the integral $\int_0^t = \int_0^{(t+\eps^2\theta)\vee 0}+\int_{(t+\eps^2\theta)\vee 0}^t$, doing integration by parts and using Minkowski's inequality, the above term can be bounded by
\begin{align*}
 2^9\,  \expt \sup_{t\in[0,T\wedge \stopt]}\sup_{\theta \in [-r,0]}\bigg[ &|g(r+)|^4\,|\sigma W((t+\eps^2\theta)\vee 0)|^4 \\
& \quad + \left(\frac{1}{\eps^2}\int_0^{(t+\eps^2\theta)\vee 0}|g'(\frac{t-u}{\eps^2}+r+\theta)|\,|\sigma W(u)|\,du\right)^4 \\
&  \quad + \,\,|g(r+\theta)|^4\,|\sigma W(t)|^4 + |g(r-)|^4\,|\sigma W((t+\eps^2\theta)\vee 0)|^4 \\
& \quad + \left(\frac{1}{\eps^2}\int_{(t+\eps^2\theta)\vee 0}^t |g'(\frac{t-u}{\eps^2}+r+\theta)|\,|\sigma W(u)|\,du\right)^4 \,\,\bigg]
\end{align*}
which, using the property $|g'(s+r)|\leq ||L_0||\,Ke^{-\kappa s}$, can be bounded above by
\begin{align*}
2^9\,  \,
\bigg[ &\sup_{t \in [0,2r]}  \sup_{\theta \in [-r,0]} 
\left(
\frac{||L_0||K}{\eps^2}\int_0^{(t+\eps^2\theta)-} e^{-\kappa(t-u)/\eps^2-\kappa \theta} du \right)^4 \\
& \qquad + (r\sup_{s\in [0,r]}|g'(s)| )^4 + 3\, \left(\sup_{s \in [0,2r]}|g(s)|^4\right) \bigg] \,\,  \expt \sup_{t\in[0,T\wedge \stopt]}|\sigma W(t)|^4  
\end{align*}
which in turn can be bounded by the constant $\tilde{C}$ defined as
\begin{align*}
2^9\,  &\left((||L_0||Ke^{\kappa r}/\kappa)^4+ (r\sup_{s\in [0,r]}|g'(s)| )^4 + 3\, \left(\sup_{s \in [0,2r]}|g(s)|^4\right) \right) \expt \sup_{t\in[0,T]}|\sigma W(t)|^4.
\end{align*} 
Collecting all the bounds we have
\begin{align*}
(\frac{1}{64}-8 K_G^4(K\eps^2/\kappa)^4) & \expt \sup_{t\in[0,T\wedge \stopt]}||(I-\pi)\hat{\pj}^{\eps}_t \Xeps||^4 \\
 &\leq \,\,\,||(I-\pi)\hat{\pj}^{\eps}_0 \Xeps||^4 + 8 C_5^4 (K\eps^2/\kappa)^4 + \tilde{C},
\end{align*}
from which the desired result follows.
\end{proof}

Following the remark \ref{rmk:mainproofmotiv} we now prove \eqref{eq:clustpointsolvesmartprob}.
\begin{prop}
For any $f_H\in \dom(\igenH)$, and any $\F_s$ measurable bounded functional $\Theta_s$ of $\hprc^\eps$ we have
\begin{align}
\lim_{\eps \to 0}\expt \bigg[\bigg(f_H(\ham(\hat{\pj}^{\eps}_{t\wedge\stopt} \Xeps))&-f_H(\ham(\hat{\pj}^{\eps}_{s\wedge\stopt} \Xeps)) \notag \\
&-\int_{s\wedge\stopt}^{t\wedge\stopt}(\igenH f_H)(\ham(\hat{\pj}^{\eps}_u \Xeps))\,du \bigg)\Theta_s(\hprc^\eps)\bigg]=0.  \label{eq:clustpointsolvesmartprob_inprop}
\end{align}
Thus, any cluster point of the sequence of laws of $\hprc^\eps$ solves the martingale problem for $\igenH$.
\end{prop}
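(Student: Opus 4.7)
The plan is to introduce the smooth approximations $f^\eps$ of $f_H$ supplied by Lemma \ref{lem:fHapprox}, apply the martingale identity of Proposition \ref{prop:hofHmart} to each $f^\eps\circ\ham$, and then pass to the limit using the $C([0,H^*])$-convergence $f^\eps\to f_H$ together with the averaging estimate of Proposition \ref{prop:diffgoestozero_properapprx}. At this point the hard analytical work has already been done; the remaining task is essentially bookkeeping.

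Concretely, I would decompose the quantity inside the expectation in \eqref{eq:clustpointsolvesmartprob_inprop} by adding and subtracting the corresponding quantity built from $f^\eps$ and $\sgenonqtf(f^\eps\circ\ham)$. This yields three contributions:
\begin{align*}
\text{(I)}\quad & \expt\Big[\big(M^{f^\eps,\eps}_t - M^{f^\eps,\eps}_s\big)\,\Theta_s(\hprc^\eps)\Big], \\
\text{(II)}\quad & \expt\Big[\big((f_H - f^\eps)(\ham(\hat{\pj}^{\eps}_{t\wedge\stopt}\Xeps)) - (f_H - f^\eps)(\ham(\hat{\pj}^{\eps}_{s\wedge\stopt}\Xeps))\big)\,\Theta_s(\hprc^\eps)\Big], \\
\text{(III)}\quad & \expt\Big[\Big(\int_{s\wedge\stopt}^{t\wedge\stopt}\big((\sgenonqtf(f^\eps\circ\ham))(\hat{\pj}^{\eps}_u\Xeps) - (\igenH f_H)(\ham(\hat{\pj}^{\eps}_u\Xeps))\big)\,du\Big)\Theta_s(\hprc^\eps)\Big],
\end{align*}
where $M^{f^\eps,\eps}_t$ is the martingale from Proposition \ref{prop:hofHmart}.

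Term (I) vanishes exactly for every $\eps>0$: since $M^{f^\eps,\eps}$ is an $\F_t$-martingale, $\Theta_s(\hprc^\eps)$ is $\F_s$-measurable and bounded, and the stopped evaluations at $t\wedge\stopt$ and $s\wedge\stopt$ are consistent with the optional stopping argument implicit in the statement of Proposition \ref{prop:hofHmart}, so $\expt[(M^{f^\eps,\eps}_t - M^{f^\eps,\eps}_s)\Theta_s(\hprc^\eps)]=0$. Term (II) is bounded in absolute value by $2\,\|\Theta_s\|_\infty\,\|f^\eps-f_H\|_{C([0,H^*])}$, which tends to zero by part 1 of Lemma \ref{lem:fHapprox}. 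Term (III) is bounded in absolute value by $\|\Theta_s\|_\infty$ times the expectation in Proposition \ref{prop:diffgoestozero_properapprx}, and hence tends to zero as $\eps\to 0$.

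The only subtlety worth flagging is that Proposition \ref{prop:hofHmart} was stated for $h\in C^2(\R)$, which is exactly what $f^\eps$ supplies (indeed $f^\eps\in C^4$ by construction), so no extra regularity lemma is needed. The main obstacle was therefore circumvented already in Section \ref{sec:avging_approxing}: the awkward fact that $f_H$ has only the regularity guaranteed by belonging to $\dom(\igenH)$ (insufficient to directly apply Itô's formula for quasi-tame functions) was bypassed by working with $f^\eps$ and controlling the error uniformly. Combining (I)--(III) then gives \eqref{eq:clustpointsolvesmartprob_inprop}, and by Prohorov's theorem together with tightness (Proposition \ref{prop:tightness}), any cluster point of the laws of $\hprc^\eps$ solves the martingale problem for $\igenH$.
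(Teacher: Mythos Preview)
Your proposal is correct and follows essentially the same route as the paper: introduce the approximations $f^\eps$ from Lemma~\ref{lem:fHapprox}, use the martingale property of $M^{f^\eps,\eps}$ from Proposition~\ref{prop:hofHmart} to kill term (I), and control terms (II) and (III) via part~1 of Lemma~\ref{lem:fHapprox} and Proposition~\ref{prop:diffgoestozero_properapprx}, respectively. The paper's proof is terser but the logical structure is identical.
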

\begin{proof}
Given $f_H\in \dom(\igenH)$ obtain $f^\eps$ from lemma \ref{lem:fHapprox}. By proposition \ref{prop:hofHmart} we have
\begin{align}\label{eq:property_M_is_mart_for_fHeps}
M_{t}^{f^\eps,\eps}:=f^\eps\circ \ham(\hat{\pj}^{\eps}_{t\wedge \stopt}\Xeps)-f^\eps\circ \ham(\hat{\pj}^{\eps}_0\Xeps)-\int_0^{t\wedge \stopt}(\sgenonqtf (f^\eps\circ \ham))(\hat{\pj}^{\eps}_u\Xeps)du 
\end{align}
is a $\F_t$ martingale. Hence, for any $\F_s$ measurable bounded functional $\Theta_s$ of $\hprc^\eps$ 
\begin{align}\label{eq:clustpointsolvesmartprob_useful1}
\expt \left[\left(f^\eps(\ham(\hat{\pj}^{\eps}_{t\wedge\stopt} \Xeps))-f^\eps(\ham(\hat{\pj}^{\eps}_{s\wedge\stopt} \Xeps))-\int_{s\wedge\stopt}^{t\wedge\stopt}(\sgenonqtf (f^\eps\circ \ham))(\hat{\pj}^{\eps}_u \Xeps)\,du \right)\Theta_s(\hprc^\eps)\right]=0.
\end{align}
Noting that $f^\eps(\ham(\eta))=(f^\eps\circ\ham)(\pi \eta)$ and using part 1 of lemma \ref{lem:fHapprox} and proposition \ref{prop:diffgoestozero_properapprx} we have the desired result \ref{eq:clustpointsolvesmartprob_inprop}.
\end{proof}

For uniqueness of the solution to martingale problem see theorem 8.1.1 in \cite{Ethier_Kurtz}.

\section{Stronger deterministic perturbations}\label{sec:strongerdetperturb}

In this section, we consider $\R$-valued random process $\Xeps(t)$ satisfying
\begin{align}\label{eq:quadnon_main_in_intgl_form_timesclaechange_drophat}
\Xeps(t)=\begin{cases} \icond(0)+\frac{1}{\eps^2}\int_0^tL_0(\hat{\pj}^{\eps}_u \Xeps)du+\frac{1}{\eps}\int_0^t G_q(\hat{\pj}^{\eps}_u \Xeps)du \\
\qquad  +\int_0^t G(\hat{\pj}^{\eps}_u \Xeps)du +\int_0^t F(\hat{\pj}^{\eps}_u \Xeps)dW(u), \qquad \hfill t\geq 0,  \\ \icond(\eps^{-2} t), \hfill -r\eps^2 \leq t\leq 0,\end{cases} 
\end{align}
with $G_q$ satisfying same assumptions as $G$ and in addition (see remark \ref{rmk:about_quad_perturb})
\begin{align}\label{eq:assumption_on_Gq_zero}
\int_0^{\timepr}G_q(T(s)\eta)\,e^{-Bs}\Psiz \,ds\,\,=\,\,0, \qquad \forall \,\eta \in P_{\Lambda}.
\end{align}

For this case, proposition \ref{prop:stablenormtozero} still holds: estimating terms involving $G_q$ in the same way as is done for $G$, we get instead of equation \eqref{eq:propstableesti_use_for_quad}
\begin{align*}
\expteps  \int_0^{t\wedge \stopt} & ||(I-\pi)\hat{\pj}^{\eps}_s \Xeps||\,ds\,\, \leq \,\,Ct\eps + C\eps^2 + Ct\eps + \eps C \, \int_0^{t\wedge \stopt} \expteps||(I-\pi)\hat{\pj}^{\eps}_u \Xeps||\,du . 
\end{align*}

Statement  \eqref{add:newprop:supbound_befGron}  holds with $C$ replaced by $C/\eps$. (Note that $C$ is independent of $\eps$). Consequently, writing $2(1+C^2/\kappa^2)$ as $\Gamma$ we have the following result which is anologous to proposition \ref{add:newprop:supboundGron}
\begin{prop}\label{add:strongdet:newprop:supboundGron}
Define $\beta^{\eps}_s:=||(I-\pi)\hat{\pj}^{\eps}_s \Xeps||$. Then there exists constants $C,\hat{C}$ and $\hat{\eps}_{(a,\delta)}$ such that, given any $a\in [0,1)$,  for $\eps < \hat{\eps}_{(a,\delta)}$ 
\begin{align}\label{add:strongdet:newprop:supboundGron_statement}
\mbbP\left[\sup_{s\in [0,T\wedge \stopt]}\left(\beta^{\eps}_s -\beta^{\eps}_0(1+\frac{1}{2\eps^2}{C}^2s^2)e^{-\kappa s/\eps^2}\right)\,\,\geq \,\,  \Gamma \eps^a \right] \,\,\,\leq\,\,\,\hat{C}\eps^{-a}\sqrt{\frac{r\eps^\delta}{T}\ln\left(\frac{T}{r\eps^\delta}\right)}, \qquad \quad \delta \in (2a,2). 
\end{align}
\end{prop}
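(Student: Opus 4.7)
The plan is to imitate the proof of Proposition~\ref{add:newprop:supboundGron}, now applied to the variation-of-constants formula for \eqref{eq:quadnon_main_in_intgl_form_timesclaechange_drophat}. The only new ingredient is the extra drift $\frac{1}{\eps}G_q$, and it is handled by the same Lipschitz-plus-semigroup bound used for $G$; because it carries an additional $1/\eps$ factor, it produces an integral coefficient of order $C/\eps$ in place of $C$, and this single modification propagates cleanly through the rest of the argument.

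Concretely, I would first project the VOC formula for \eqref{eq:quadnon_main_in_intgl_form_timesclaechange_drophat} onto $\hat{Q}_\Lambda$, obtaining four summands. The pieces involving $\icond$, $G$, and the stochastic integral are bounded exactly as in \eqref{eq:vocnonlinstable_esti1}--\eqref{eq:vocnonlinstable_esti2} and Lemma~\ref{add:newlem:supbound}. For the new $G_q$-piece, using $|G_q(\eta)|\leq |G_q(\pi\eta)|+K_{G_q}\|(I-\pi)\eta\|$, the boundedness of $\pi\hat{\pj}^\eps_u\Xeps$ on $[0,T\wedge\stopt]$, and the exponential decay from Lemma~\ref{lem:collec_ext_res}(5), one obtains
\[
\left\|\frac{1}{\eps}\int_0^s \Th\!\left(\tfrac{s-u}{\eps^2}\right)(I-\pih)\Ind\,G_q(\hat{\pj}^{\eps}_u \Xeps)\,du\right\| \;\leq\; C\eps \;+\; \frac{C}{\eps}\int_0^s e^{-\kappa(s-u)/\eps^2}\,\beta^{\eps}_u\,du .
\]
Combining all four estimates and invoking Lemma~\ref{add:newlem:supbound} to discard the stochastic integral (at the cost of the probability $1-\hat{C}\eps^{-a}\sqrt{(r\eps^\delta/T)\ln(T/(r\eps^\delta))}$), one recovers the statement \eqref{add:newprop:supbound_befGron} verbatim except that the coefficient in front of the convolution with $\beta^\eps_u$ is now $C/\eps$ instead of $C$.

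Finally, running Theorem~1.5 of~\cite{Bainov} with the enlarged coefficient $C/\eps$ produces $\beta^\eps_0\bigl(1+\tfrac{1}{2\eps^2}C^2 s^2\bigr)e^{-\kappa s/\eps^2}$ in place of $\beta^\eps_0\bigl(1+\tfrac{1}{2}C^2 s^2\bigr)e^{-\kappa s/\eps^2}$, plus a residual term of size $(1+C^2/\kappa^2)(C\eps+\eps^a)$, which is dominated by $\Gamma\eps^a=2(1+C^2/\kappa^2)\eps^a$ once $\eps$ is small enough. The point that requires checking is that the bound is still useful despite the $1/\eps^2$ prefactor: the function $\frac{C^2 s^2}{2\eps^2}e^{-\kappa s/\eps^2}$ peaks at $s\sim\eps^2$ with maximum of order $\eps^2$, so the polynomial-times-exponential piece is uniformly small on $[0,T\wedge\stopt]$. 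Notably, the zero-average hypothesis \eqref{eq:assumption_on_Gq_zero} on $G_q$ is not invoked here; it plays its role only later, in suppressing the otherwise singular $1/\eps$ drift in the effective equation for $\hprc^\eps$.
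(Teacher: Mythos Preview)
Your proposal is correct and follows exactly the approach the paper takes: the paper merely remarks that \eqref{add:newprop:supbound_befGron} holds with $C$ replaced by $C/\eps$ and that setting $\Gamma=2(1+C^2/\kappa^2)$ then yields the result, and you have filled in precisely those details. Your additional observations---that the $\tfrac{C^2s^2}{2\eps^2}e^{-\kappa s/\eps^2}$ term remains uniformly small and that \eqref{eq:assumption_on_Gq_zero} is not used here---are accurate and go slightly beyond what the paper spells out.
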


Propositions \ref{prop:Hmart} and \ref{prop:hofHmart} hold with obvious changes: for example, the process $M^{\eps}_t$ defined by
\begin{align*}
M_t^{\htf,\eps}= \htf \circ \ham(\hat{\pj}^{\eps}_{t\wedge \stopt}\Xeps) &- \htf \circ \ham(\hat{\pj}^{\eps}_0\Xeps)-\int_0^{t\wedge \stopt}(\sgenonqtf  (\htf \circ \ham))(\hat{\pj}^{\eps}_u \Xeps)du \\
& -\frac{1}{\eps}\int_0^{t\wedge \stopt}G_q(\hat{\pj}^{\eps}_u \Xeps)\Psiz^*\la \hat{\pj}^{\eps}_u \Xeps,\Psi\ra \htf'\big|_{\ham(\hat{\pj}^{\eps}_u \Xeps)}\,ds
\end{align*}
is a $\F_t$ martingale with quadratic variation given by $\int_0^{t\wedge \stopt} Q(\hat{\pj}^{\eps}_s \Xeps)ds$.

Proposition \ref{prop:pre_projdiffgoestozero} still holds albeit with loss of a power of $\eps$, i.e. for $\varphi \in C^2(\bar{\Ssp})$, 
$$\expteps\left| \int_0^{t\wedge \stopt} \left\{ \varphi(\pi \hat{\pj}^{\eps}_s \Xeps)-(\Aavg \varphi)([\pi \hat{\pj}^{\eps}_s \Xeps])\right\}\,ds\right|\,\,\leq\,\, C(1+t)\eps||\varphi||_{C^2(\bar{\Ssp})}.$$

Lemmas \ref{lem:projdiffgoestozero} and \ref{lem:diffgoestozero} and thus proposition \ref{prop:diffgoestozero_properapprx} holds, but we need to supplement them with analogous results for the term
$$\frac{1}{\eps}\int_0^{t\wedge \stopt}G_q(\hat{\pj}^{\eps}_u \Xeps)\Psiz^*\la \hat{\pj}^{\eps}_u \Xeps,\Psi\ra (f^{\eps})'\big|_{\ham(\hat{\pj}^{\eps}_u \Xeps)}\,ds.$$
This is the purpose of the rest of this section.

Let $\tau,\,\Gqfun,\,\Gqfunwh,\,a^{(n)}_q:\,P_{\Lambda}\to \R$ be defined by
\begin{align}\label{eq:taudef}
\tau({\eta}):=\inf \left\{ t > 0\,:\,\la \Th(t)\eta, \Psi\ra = ||\la \eta,\Psi \ra||_2\left[\begin{array}{c}1\\0\end{array}\right] \right\},
\end{align}
where $||\la \eta,\Psi \ra||_2:=\sqrt{\la \eta,\Psi \ra^*\la \eta,\Psi \ra},$ and
\begin{align}\label{eq:defGqfun}
\Gqfun(\eta):=G_q(\eta)\Psiz^*\la \eta,\Psi\ra, \qquad \Gqfunwh(\eta):=-\int_0^{\tau(\eta)}\Gqfun(\Th(s)\eta)ds,
\end{align}
\begin{align}\label{eq:useful_in_quaddrift1_calc_1}
a^{(1)}_q(\eta)=-\int_0^{\tau({\eta})}(\Th(s)\pi\Ind .\nabla)\Gqfun(\Th(s)\pi\eta)ds,
\end{align}
$$a^{(2)}_{q}(\eta)=-\int_0^{\tau({\eta})}(\Th(s)\pi\Ind .\nabla)^2\Gqfun(\Th(s)\pi\eta)ds.$$
Define $b^{(1)}_{q,H}:[0,H^*]\to \R$ by
\begin{align}\label{eq:quaddrift1}
b^{(1)}_{q,H} \circ \eqvtoH = -\Aavg \bigg(G_q(\cdot)a^{(1)}_q(\cdot) \bigg).
\end{align}

\begin{prop}\label{prop:quadnon_centercenter}
$$\lim_{\eps \to 0}\expt \left| \int_0^{t\wedge \stopt}\left\{\frac{1}{\eps}G_q(\pi\hat{\pj}^{\eps}_u \Xeps)\Psiz^*\la \pi\hat{\pj}^{\eps}_u \Xeps,\Psi\ra \, -\,b^{(1)}_{q,H}(\ham(\hat{\pj}^{\eps}_u \Xeps)) \right\}du\right|=0.$$
\end{prop}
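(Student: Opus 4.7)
The proof follows the same singular-perturbation template as Proposition \ref{prop:diffgoestozero_properapprx}, but with $\Gqfunwh$ playing the role of a corrector for a Poisson-type equation on the critical subspace $P_{\Lambda}$. First I would verify that $\Aavg \Gqfun \equiv 0$ on $P_{\Lambda}$: using \eqref{eq:TPhi_eq_PhieB} and the skew-symmetry $B^{T}=-B$,
$$
(\Aavg \Gqfun)([\eta]) \,=\, \frac{1}{\tmpr}\int_{0}^{\tmpr} G_q(T(s)\eta)\,\Psiz^{*}e^{Bs}\la\eta,\Psi\ra\,ds \,=\, \la\eta,\Psi\ra^{*}\,\frac{1}{\tmpr}\int_{0}^{\tmpr} G_q(T(s)\eta)\,e^{-Bs}\Psiz\,ds \,=\, 0
$$
by \eqref{eq:assumption_on_Gq_zero}. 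This zero-average property is precisely what makes $\Gqfunwh$ in \eqref{eq:defGqfun} well-defined and continuous across the discontinuity of $\tau(\cdot)$ (both limits of the integral coincide because they differ by $\int_{0}^{\tmpr}\Gqfun(T(s)\eta)\,ds=0$), and in fact $C^{2}$ on $P_{\Lambda}$. Differentiating along the flow yields the Poisson identity $(\tgenonqtf \Gqfunwh)(\eta)=\Gqfun(\eta)$ on $P_{\Lambda}$.

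Next, extend $\tilde{\Gqfunwh}:=\Gqfunwh\circ\pi$ to $\C$; as for $\ham$ in Section \ref{sec:mainresult}, it lies in an appropriate extension of $\dom(\genonqtfext)$ because it depends on $\eta$ only through $\la\eta,\Psi_{1}\ra$ and $\la\eta,\Psi_{2}\ra$. Applying the It\^o/quasi-tame expansion to the system \eqref{eq:quadnon_main_in_intgl_form_timesclaechange_drophat} gives
$$
\tilde{\Gqfunwh}(\hat{\pj}^{\eps}_{t\wedge\stopt}\Xeps)-\tilde{\Gqfunwh}(\hat{\pj}^{\eps}_{0}\Xeps)
=\frac{1}{\eps^{2}}\!\int_{0}^{t\wedge\stopt}\!\Gqfun(\pi\hat{\pj}^{\eps}_{u}\Xeps)\,du
+\frac{1}{\eps}\!\int_{0}^{t\wedge\stopt}\!G_q(\hat{\pj}^{\eps}_{u}\Xeps)\,a^{(1)}_q(\hat{\pj}^{\eps}_{u}\Xeps)\,du
+R^{\eps}_{t}+M^{\eps}_{t},
$$
where $a^{(1)}_q$ and $a^{(2)}_q$ arise from the $(\pih\Ind.\nabla)$ and $(\pih\Ind.\nabla)^{2}$ derivatives of $\tilde{\Gqfunwh}$, matching \eqref{eq:useful_in_quaddrift1_calc_1}; $R^{\eps}_{t}$ gathers the $G$- and $\tfrac{1}{2}F^{2}$-induced drift terms, and $M^{\eps}_{t}$ is a martingale with bounded quadratic variation on $[0,T\wedge\stopt]$. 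Multiplying by $\eps^{2}$ and rearranging,
$$
\frac{1}{\eps}\int_{0}^{t\wedge\stopt}\Gqfun(\pi\hat{\pj}^{\eps}_{u}\Xeps)\,du
=-\int_{0}^{t\wedge\stopt}G_q(\hat{\pj}^{\eps}_{u}\Xeps)\,a^{(1)}_q(\hat{\pj}^{\eps}_{u}\Xeps)\,du+O_{L^{1}}(\eps),
$$
the error collecting the boundary, $\eps R^{\eps}_{t}$ and $\eps M^{\eps}_{t}$ contributions, each uniformly $O(\eps)$ by boundedness of $\tilde{\Gqfunwh},a^{(1)}_q,a^{(2)}_q$ on $\bar{\Ssp}$ and the BDG estimate on the martingale.

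Finally, apply two reductions to the remaining $O(1)$ integral. The Lipshitz continuity of $G_q$ together with smoothness of $a^{(1)}_q$, combined with Proposition \ref{prop:stablenormtozero} (which still holds for \eqref{eq:quadnon_main_in_intgl_form_timesclaechange_drophat} as noted before Proposition \ref{add:strongdet:newprop:supboundGron}), yields
$$
\expt\int_{0}^{t\wedge\stopt}\bigl|G_q(\hat{\pj}^{\eps}_{u}\Xeps)a^{(1)}_q(\hat{\pj}^{\eps}_{u}\Xeps)-G_q(\pi\hat{\pj}^{\eps}_{u}\Xeps)a^{(1)}_q(\pi\hat{\pj}^{\eps}_{u}\Xeps)\bigr|\,du\,\longrightarrow\,0.
$$
Then the strengthened averaging estimate in this section (the $O(\eps)$ analogue of Proposition \ref{prop:pre_projdiffgoestozero}) applied to $\varphi:=G_q\cdot a^{(1)}_q\in C^{2}(\bar{\Ssp})$ replaces the projected integrand by $(\Aavg\varphi)([\pi\hat{\pj}^{\eps}_{u}\Xeps])$, which by \eqref{eq:quaddrift1} equals $-b^{(1)}_{q,H}(\ham(\hat{\pj}^{\eps}_{u}\Xeps))$. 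Chaining the three estimates and sending $\eps\to 0$ proves the claim. The main obstacle is the very first step: one must verify the $C^{2}$-regularity of $\Gqfunwh$ on $P_{\Lambda}$ and then justify that the It\^o expansion of $\tilde{\Gqfunwh}$ produces precisely the coefficients in \eqref{eq:useful_in_quaddrift1_calc_1}, without spurious boundary contributions from Gateaux-differentiating $\tau(\cdot)$---the key fact being that the $\tau$-boundary terms cancel by virtue of $\Aavg\Gqfun=0$.
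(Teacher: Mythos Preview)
Your overall template is the same as the paper's: expand $\Gqfunwh\circ\pi$ via It\^o, multiply by $\eps$ (you wrote $\eps^{2}$, a slip), and then average the surviving $O(1)$ drift. The gap is in the step you flag as the ``main obstacle'': the $\tau$-boundary contributions do \emph{not} cancel pointwise. Differentiating $\Gqfunwh(\eta)=-\int_{0}^{\tau(\eta)}\Gqfun(T(s)\eta)\,ds$ in the direction $\pih\Ind$ gives, by Leibniz,
\[
D_{\pih\Ind}\Gqfunwh(\eta)\;=\;a^{(1)}_q(\eta)\;-\;\Gqfun\big(T(\tau(\eta))\eta\big)\,D_{\pih\Ind}\tau(\eta)\;=\;a^{(1)}_q(\eta)+\beta_1(\eta),
\]
with $\beta_1(\eta)=\Gqfun(T(\tau)\eta)\,\omega_c^{-2}\,\Psiz^{*}B\la\eta,\Psi\ra/\|\la\eta,\Psi\ra\|_2^{2}$. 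The zero-average condition $\Aavg\Gqfun=0$ is what makes $\Gqfunwh$ globally $C^{2}$ in spite of the jump of $\tau$ across the reference ray (the jump in each integral over $[0,\tau]$ equals a full period and hence vanishes); it does \emph{not} kill $\beta_1$. Consequently your displayed identity should read
\[
\frac{1}{\eps}\int_{0}^{t\wedge\stopt}\Gqfun(\pi\hat{\pj}^{\eps}_{u}\Xeps)\,du
=-\int_{0}^{t\wedge\stopt}G_q(\hat{\pj}^{\eps}_{u}\Xeps)\,\big(a^{(1)}_q+\beta_1\big)(\pi\hat{\pj}^{\eps}_{u}\Xeps)\,du+O_{L^{1}}(\eps),
\]
with further $O(\eps)$ terms $\beta_2,\beta_3,\beta_4$ arising from the second $\tau$-derivative, the It\^o correction of $\tau_t$, and the cross-variation $\la d\Gqfun,d\tau_t\ra$; the paper computes $d\tau_t$ explicitly (Remark \ref{rmk:evol_of_taut}) to obtain these. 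The extra $O(1)$ piece $G_q\beta_1$ is removed only \emph{after} averaging: since $\Gqfun(T(\tau)\eta)$ and $\|\la\eta,\Psi\ra\|_2$ are constants of the motion, $\beta_1(T(s)\eta)$ is proportional to $\Psiz^{*}Be^{Bs}\la\eta,\Psi\ra=-(e^{-Bs}B\Psiz)^{*}\la\eta,\Psi\ra$, and then $\Aavg(G_q\beta_1)=0$ follows from \eqref{eq:assumption_on_Gq_zero} again---a second, separate invocation of that hypothesis.

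Two smaller points the paper handles that your outline does not: (i) boundedness of the $\beta_i$ on $\bar{\Ssp}$ near the origin requires the extra hypothesis $\limsup_{\|\eta\|\to 0}|G_q(\eta)|/\|\eta\|^{2}<\infty$, since the $\beta_i$ carry $\|\la\eta,\Psi\ra\|_2^{-2}$ or $\|\la\eta,\Psi\ra\|_2^{-4}$; (ii) after replacing $G_q(\hat{\pj}^{\eps}_u\Xeps)$ by $G_q(\pi\hat{\pj}^{\eps}_u\Xeps)$ via Lipschitz and Proposition \ref{prop:stablenormtozero}, you must also carry $\beta_1$ through the averaging step before it disappears.
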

\begin{proof}
Recalling definitions \eqref{eq:defGqfun}, and denoting $\tau_t:=\tau(\pi\hat{\pj}^{\eps}_{t}\Xeps)$, we have
\begin{align}\label{eq:prop_quadnon_centercenter_useful_integralform}
\Gqfunwh(\pi\hat{\pj}^{\eps}_{t+dt} \Xeps)-\Gqfunwh(\pi\hat{\pj}^{\eps}_{t} \Xeps)\,\,=&\,\,-\int_{\tau_t}^{\tau_{t+dt}}\Gqfun(\Th(s)\pi\hat{\pj}^{\eps}_{t+dt}X^\eps)ds \notag \\
& \quad -\int_0^{\tau_{t}}\left[\Gqfun(\Th(s)\pi\hat{\pj}^{\eps}_{t+dt}X^\eps)-\Gqfun(\Th(s)\pi\hat{\pj}^{\eps}_{t}X^\eps)\right]ds.
\end{align}
The second term on the RHS can be shown to be
\begin{align}
\bigg((\frac{1}{\eps}G_1+G_s)|_{\hat{\pj}^{\eps}_{t}X^\eps}dt &+ \sigma dW_t\bigg)a^{(1)}_q({\pi\hat{\pj}^{\eps}_{t}X^\eps}) \,\,\,+\,\,\, \frac12 \sigma^2 dt \,a^{(2)}_q({\pi\hat{\pj}^{\eps}_{t}X^\eps}) \notag \\
&-\frac{1}{\eps^2}dt \int_0^{\tau_{t}}(\gen\Th(s)\pi\hat{\pj}^{\eps}_{t}X^\eps.\nabla)\Gqfun \bigg|_{\Th(s)\pi\hat{\pj}^{\eps}_{t}X^\eps}ds,
\end{align}
where $\gen$ is the generator of the semigroup $\Th(t)$. For the last term in the above equation we can use
$$\int_0^{\tau_{t}}(\gen\Th(s)\pi\hat{\pj}^{\eps}_{t}X^\eps.\nabla)\Gqfun \bigg|_{\Th(s)\pi\hat{\pj}^{\eps}_{t}X^\eps}ds\,\,=\,\,\Gqfun(\Th(\tau_t)\pi\hat{\pj}^{\eps}_{t}X^\eps)-\Gqfun(\pi\hat{\pj}^{\eps}_{t}X^\eps).$$

The equation \eqref{eq:prop_quadnon_centercenter_useful_integralform} can be written in the differential form as
\begin{align}\label{eq:prop_quadnon_centercenter_useful_differentialform}
d\Gqfunwh(\pi\hat{\pj}^{\eps}_{t} \Xeps)\,\,=&\,\,-\Gqfun(\Th(\tau_t)\pi\hat{\pj}^{\eps}_{t}X^\eps)d\tau_t \,\,-\,\,\frac12\frac{d}{dv}\big|_{v=0}\Gqfun(\Th(v)\Th(\tau_t)\pi\hat{\pj}^{\eps}_{t}X^\eps)\la d\tau_t,d\tau_t\ra \notag \\
& \quad -\,\la d\Gqfun(\Th(s)\pi\hat{\pj}^{\eps}_{t} \Xeps)|_{s=\tau_t}, d\tau_t\ra \notag \\
& \quad -\,\,\frac{1}{\eps^2}\Gqfun(\Th(\tau_t)\pi\hat{\pj}^{\eps}_{t}X^\eps)dt\,\,+\,\,\frac{1}{\eps^2}\Gqfun(\pi\hat{\pj}^{\eps}_{t}X^\eps)dt \notag \\
& \quad + \,\bigg((\frac{1}{\eps}G_q+G)|_{\hat{\pj}^{\eps}_{t}X^\eps}dt + \sigma dW_t\bigg)a^{(1)}_q({\pi\hat{\pj}^{\eps}_{t}X^\eps}) \notag \\
& \quad + \,\frac12 \sigma^2 dt \,a^{(2)}_q({\pi\hat{\pj}^{\eps}_{t}X^\eps}). 
\end{align}
In order to proceed further, we need to know the evolution of $\tau_t$ which is discussed in remark \ref{rmk:evol_of_taut}. Define the functions $\beta_i:P_{\Lambda}\to \R$ 
\begin{align*}
\beta_1(\eta)&=\Gqfun(\Th(\tau(\eta))\eta)\frac{1}{\om^2}\frac{\Psiz^*B\la \eta, \Psi\ra}{||\la \eta, \Psi\ra||_2^2}, \\
\beta_2(\eta)&=-\Gqfun(\Th(\tau(\eta))\eta)\frac{1}{\om^2}\frac{\Psiz^*B\la \eta, \Psi\ra\, \Psiz^*\la \eta, \Psi\ra}{||\la \eta, \Psi\ra||_2^4}\sigma^2, \\
\beta_3(\eta)&=-\frac12\frac{d}{dv}\bigg|_{v=0}\Gqfun(\Th(v)\Th(\tau(\eta))\eta)\left(\frac{1}{\om^2}\frac{\Psiz^*B\la \eta, \Psi\ra}{||\la \eta, \Psi\ra||_2^2}\right)^2\sigma^2,\\
\beta_4(\eta)&=\sigma^2 (\Th(\tau(\eta))\pih\Ind.\nabla)\Gqfun(\Th(\tau(\eta))\eta)\frac{1}{\om^2}\frac{\Psiz^*B\la \eta, \Psi\ra}{||\la \eta, \Psi\ra||_2^2},
\end{align*} 
Assume $\lim_{\eta \in P_{\Lambda},\, ||\eta||\to 0}\frac{|G_q(\eta)|}{||\eta||^2}<\infty$. Then all the above are bounded on $\Ssp$. The equation \eqref{eq:prop_quadnon_centercenter_useful_differentialform} can be written as
\begin{align*}
\eps\, d\Gqfunwh(\pi\hat{\pj}^{\eps}_{t}X^\eps) \,\,= \,\,&dt\, \left(\frac{1}{\eps}\Gqfun(\pi\hat{\pj}^{\eps}_{t}X^\eps)+G_q|_{\pi \hat{\pj}^{\eps}_{t}X^\eps}\left(a^{(1)}_q({\pi\hat{\pj}^{\eps}_{t}X^\eps}) + \beta_1(\pi{\hat{\pj}^{\eps}_{t}X^\eps})\right)\right)\\
& \quad + dt\, \left(G_q|_{\hat{\pj}^{\eps}_{t}X^\eps}-G_q|_{\pi \hat{\pj}^{\eps}_{t}X^\eps}\right)\left(a^{(1)}_q({\pi\hat{\pj}^{\eps}_{t}X^\eps})+\beta_1(\pi\hat{\pj}^{\eps}_{t}X^\eps)\right) \\
& \quad +\eps G_2|_{\pi\hat{\pj}^{\eps}_{t}X^\eps}dt\left(a^{(1)}_q({\pi\hat{\pj}^{\eps}_{t}X^\eps})+\beta_1(\pi\hat{\pj}^{\eps}_{t}X^\eps)\right) \\
& \quad + \eps\,dt\, \left(G_2|_{\hat{\pj}^{\eps}_{t}X^\eps}-G_2|_{\pi \hat{\pj}^{\eps}_{t}X^\eps}\right)\left(a^{(1)}_q({\pi\hat{\pj}^{\eps}_{t}X^\eps})+\beta_1(\pi\hat{\pj}^{\eps}_{t}X^\eps)\right) \\
& \quad +\eps \sigma dW_t\left(a^{(1)}_q({\pi\hat{\pj}^{\eps}_{t}X^\eps})+\beta_1(\pi\hat{\pj}^{\eps}_{t}X^\eps)\right) \\
& \quad + \eps\frac12 \sigma^2 dt \,a^{(2)}_q({\pi\hat{\pj}^{\eps}_{t}X^\eps})\,\, + \,\,\eps \sum_{i=2}^4\beta_i(\pi\hat{\pj}^{\eps}_{t}X^\eps)dt
\end{align*}

Using Lipshitz condition and proposition \ref{prop:stablenormtozero} on terms involving $G$, $G_q$; using Burkholder-Davis-Gundy inequality on terms involving $dW$; and because of boundedness of the functions, we have
$$\lim_{\eps \to 0}\expt \left| \int_0^{t\wedge \stopt}\left\{\frac{1}{\eps}\Gqfun(\pi\hat{\pj}^{\eps}_{t}X^\eps)+G_q|_{\pi \hat{\pj}^{\eps}_{t}X^\eps}\left(a^{(1)}_q({\pi\hat{\pj}^{\eps}_{t}X^\eps}) + \beta_1(\pi{\hat{\pj}^{\eps}_{t}X^\eps})\right) \right\}du\right|=0.$$  

Applying proposition \ref{prop:pre_projdiffgoestozero} to $G_q(\cdot)\left(a^{(1)}_q(\cdot) + \beta_1(\cdot)\right)$ and then noting that $\Aavg \bigg(G_q(\cdot)\beta_1(\cdot) \bigg)=0$ due to assumption \eqref{eq:assumption_on_Gq_zero} yields the desired result.
\end{proof}


Define 
\begin{align}\label{eq:defGqfun_sc}
\Gqfun(\eta):=\bigg(G_q(\eta)-G_q(\pi\eta)\bigg)\Psiz^*\la \eta,\Psi\ra, \qquad \Gqfunwh(\eta):=\int_0^{\infty}\Gqfun(\Th(s)\eta)ds.
\end{align}
Note that $\Gqfunwh$ is well-defined: using Lipshitz condition on $G_q$ and part 5 of lemma \ref{lem:collec_ext_res}
\begin{align*}
|\Gqfunwh(\eta)|\,\,\leq\,\, \int_0^{\infty}|\Gqfun(\Th(s)\eta)|ds \,\,&\leq\,\, C\int_0^{\infty}||\Th(s)(I-\pi)\eta||\,\,|\Psiz^*\la \Th(s)\eta,\Psi\ra|\,ds \\
& \leq C\,||(I-\pi)\eta||\,||\pi \eta||.
\end{align*}
Following similar steps as in lemma \ref{lem:lemreq4tightness} we can show that $\expt \sup_{t\in[0,T\wedge \stopt]}||(I-\pi)\hat{\pj}^{\eps}_t \Xeps|| \leq C.$ Also, for $s\in [0,t\wedge \stopt]$, $||\pi \hat{\pj}^{\eps}_s \Xeps||\leq \sqrt{2H^*}$. Hence $\expt |\Gqfunwh(\hat{\pj}^{\eps}_{t \wedge \stopt} \Xeps)|$ is bounded. 

Define $a_q:\C \to \R$ and $b^{(2)}_{q,H}:[0,H^*]\to \R$ by
\begin{align}\label{eq:quaddrift2}
a_q(\eta)=\int_0^{\infty}(\Th(s)\Ind .\nabla)\Gqfun(\Th(s)\eta)ds, \qquad \quad b^{(2)}_{q,H} \circ \eqvtoH = \Aavg \bigg(G_q(\cdot)a_q(\cdot) \bigg).
\end{align}

Writing the evolution equation for $d\Gqfunwh(\hat{\pj}^{\eps}_{t \wedge \stopt} \Xeps)$, noting that 
$$\int_0^{\infty}(\gen\Th(s)\hat{\pj}^{\eps}_{t}X^\eps.\nabla)\Gqfun \bigg|_{\Th(s)\hat{\pj}^{\eps}_{t}X^\eps}ds\,\,=\,\,-\Gqfun(\hat{\pj}^{\eps}_{t}X^\eps),$$
and proceeding in similar manner as in proof of proposition \ref{prop:quadnon_centercenter} we arrive at the following proposition \ref{prop:quadnon_stablecenter}:

\begin{prop}\label{prop:quadnon_stablecenter}
$$\lim_{\eps \to 0}\expt \left| \int_0^{t\wedge \stopt}\left\{\frac{1}{\eps}\bigg(G_q(\hat{\pj}^{\eps}_u \Xeps)-G_q(\pi\hat{\pj}^{\eps}_u \Xeps)\bigg)\Psiz^*\la \hat{\pj}^{\eps}_u \Xeps,\Psi\ra \, -\,b^{(2)}_{q,H}(\ham(\hat{\pj}^{\eps}_u \Xeps)) \right\}du\right|=0.$$
\end{prop}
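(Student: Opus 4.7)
The plan is to mirror the proof of proposition \ref{prop:quadnon_centercenter}, with the simplification that the auxiliary functional $\Gqfunwh$ defined in \eqref{eq:defGqfun_sc} is an absolutely convergent integral over all of $[0,\infty)$ rather than over one period, so the state-dependent stopping time $\tau$ and all its derivatives drop out of the calculation. My strategy is to apply the Itô-type formula of lemma \ref{lem:appliItoformula} (extended to $\tauqext$ and to the full perturbed SDDE \eqref{eq:quadnon_main_in_intgl_form_timesclaechange_drophat}) to $\Gqfunwh(\hat{\pj}^{\eps}_t \Xeps)$, and then trade the singular $\frac{1}{\eps^2}$ piece of the drift for an $O(1)$ contribution of $-\Gqfun$.

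First I would verify that $\Gqfunwh$ lies in an extension of $\dom(\genonqtfext)$ and establish the identity
$$(\tgenonqtf \Gqfunwh)(\eta)\,=\,\frac{d}{dt}\bigg|_{t=0}\int_0^{\infty}\Gqfun(\Th(s+t)\eta)\,ds\,=\,-\Gqfun(\eta).$$
Absolute convergence of the integral and of its formal derivatives follows from part 5 of lemma \ref{lem:collec_ext_res} together with the bound $|\Gqfun(\Th(s)\eta)|\leq K_G\|(I-\pi)\Th(s)\eta\|\,|\Psiz^*\la \Th(s)\eta,\Psi\ra|\leq C\,e^{-\kappa s}\|(I-\pi)\eta\|\,\|\pi\eta\|$ noted after \eqref{eq:defGqfun_sc}; the same estimate yields boundedness of $a_q$ on $\bar\Ssp$. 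Applying the Itô-type formula and invoking this identity would yield, for $t\leq \stopt$,
\begin{align*}
\eps\,d\Gqfunwh(\hat{\pj}^{\eps}_t \Xeps)\,=\,&-\tfrac{1}{\eps}\Gqfun(\hat{\pj}^{\eps}_t \Xeps)\,dt\,+\,G_q(\hat{\pj}^{\eps}_t \Xeps)\,a_q(\hat{\pj}^{\eps}_t \Xeps)\,dt \\
&+\,\eps\,(\sgenonqtf\Gqfunwh)(\hat{\pj}^{\eps}_t \Xeps)\,dt\,+\,\eps\,\sigma\,(\Ind.\nabla)\Gqfunwh(\hat{\pj}^{\eps}_t \Xeps)\,dW_t,
\end{align*}
where the $G_q\,a_q$ term is the $\frac{1}{\eps}G_q$-contribution to the Itô drift, arising as the Gateaux derivative of $\Gqfunwh$ at $\eta$ in the direction of the impulse $\Ind$ (cf.\ \eqref{eq:quaddrift2}).

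Next I would integrate over $[0,t\wedge\stopt]$, isolate $\tfrac{1}{\eps}\Gqfun$, and take $\expt|\cdot|$. The boundary term $\eps[\Gqfunwh(\hat{\pj}^{\eps}_{t\wedge\stopt}\Xeps)-\Gqfunwh(\hat{\pj}^{\eps}_0\Xeps)]$ is $O(\eps)$ because $|\Gqfunwh(\eta)|\leq C\|(I-\pi)\eta\|\,\|\pi\eta\|$ and $\expt\sup_{t\leq T\wedge\stopt}\|(I-\pi)\hat{\pj}^{\eps}_t \Xeps\|$ is bounded by the computation underlying lemma \ref{lem:lemreq4tightness}; the $\eps\sgenonqtf\Gqfunwh$ term is $O(\eps)$ on $\bar\Ssp$; and the stochastic integral is $O(\eps)$ via Burkholder--Davis--Gundy on the bounded integrand. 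Hence
$$\lim_{\eps\to 0}\expt\left|\int_0^{t\wedge\stopt}\!\!\Big\{\tfrac{1}{\eps}\Gqfun(\hat{\pj}^{\eps}_u \Xeps)-G_q(\hat{\pj}^{\eps}_u \Xeps)\,a_q(\hat{\pj}^{\eps}_u \Xeps)\Big\}du\right|=0.$$
To close the argument, I would first exploit the Lipshitz continuity of $G_q\cdot a_q$ together with proposition \ref{prop:stablenormtozero} to replace $G_q(\hat{\pj}^{\eps}_u \Xeps)\,a_q(\hat{\pj}^{\eps}_u \Xeps)$ by its $\pi$-projected counterpart modulo a vanishing error, and then apply proposition \ref{prop:pre_projdiffgoestozero} to $\varphi:=G_q(\cdot)\,a_q(\cdot)\in C^2(\bar\Ssp)$ to substitute this functional by its orbit average, which by \eqref{eq:quaddrift2} is exactly $b^{(2)}_{q,H}(\ham(\cdot))$.

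The hard part, as in proposition \ref{prop:quadnon_centercenter}, will be rigorously justifying the differentiation-under-the-integral identity $(\tgenonqtf\Gqfunwh)(\eta)=-\Gqfun(\eta)$ and the membership of $\Gqfunwh$ in a class to which the Itô-type formula of lemma \ref{lem:appliItoformula} applies; unlike the centered case, no state-dependent period $\tau$ needs differentiating, but one must instead secure uniform integrability of the orbit tails $\Th(s)\eta$ over $\eta\in\bar\Ssp$ plus an $O(\|(I-\pi)\eta\|\,\|\pi\eta\|)$ remainder — which is precisely what the exponential decay on $\hat{Q}_\Lambda$ from lemma \ref{lem:collec_ext_res}(5), combined with the Lipshitz bound on $G_q$, delivers.
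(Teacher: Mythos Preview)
Your proposal is correct and follows essentially the same approach as the paper. The paper's own proof is a one-sentence sketch that says to write the evolution equation for $d\Gqfunwh(\hat{\pj}^{\eps}_{t\wedge\stopt}\Xeps)$, use the identity $\int_0^{\infty}(\gen\Th(s)\eta.\nabla)\Gqfun\big|_{\Th(s)\eta}ds=-\Gqfun(\eta)$, and then proceed as in proposition \ref{prop:quadnon_centercenter}; you have fleshed out exactly these steps, correctly identifying that the integral over $[0,\infty)$ replaces the period-long integral so that the $\tau$-dependence disappears, and that the remaining bookkeeping (boundary term via lemma \ref{lem:lemreq4tightness}, stochastic integral via Burkholder--Davis--Gundy, projection via proposition \ref{prop:stablenormtozero}, and averaging via proposition \ref{prop:pre_projdiffgoestozero}) goes through as before.
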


Combinining propositions \ref{prop:quadnon_centercenter} and \ref{prop:quadnon_stablecenter} we have
\begin{prop}\label{prop:quadnon_allfull}
$$\lim_{\eps \to 0}\expt \left| \int_0^{t\wedge \stopt}\left\{\frac{1}{\eps}\bigg( G_q(\hat{\pj}^{\eps}_u \Xeps)\Psiz^*\la \hat{\pj}^{\eps}_u \Xeps,\Psi\ra \, -\,\bigg(b^{(1)}_{q,H}+b^{(2)}_{q,H}\bigg)(\ham(\hat{\pj}^{\eps}_u \Xeps)) \right\}du\right|=0.$$
\end{prop}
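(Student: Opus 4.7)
The plan is to deduce Proposition \ref{prop:quadnon_allfull} directly from the two preceding propositions via the triangle inequality, once a single algebraic identity is observed. Writing $A_\eps(u)$ for the integrand appearing inside the absolute value in Proposition \ref{prop:quadnon_centercenter} and $B_\eps(u)$ for the integrand in Proposition \ref{prop:quadnon_stablecenter}, I have
$$\expt\left|\int_0^{t\wedge \stopt}(A_\eps(u)+B_\eps(u))\,du\right| \,\leq\, \expt\left|\int_0^{t\wedge \stopt}A_\eps(u)\,du\right| \,+\, \expt\left|\int_0^{t\wedge \stopt}B_\eps(u)\,du\right|,$$
and the right-hand side vanishes as $\eps \to 0$ by the two propositions. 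So the only remaining task is to check that $A_\eps(u) + B_\eps(u)$ coincides with the integrand in Proposition \ref{prop:quadnon_allfull}.

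The key identity is $\la \pi \eta, \Psi \ra = \la \eta, \Psi \ra$ for every $\eta \in \C$, which I would verify using the representation \eqref{eq:projoper_intermsof_bilform} of $\pi$ together with the biorthogonality $\la \Phi_i, \psi_j \ra = \delta_{ij}$:
$$\la \pi \eta, \psi_j \ra \,=\, \la \eta, \psi_1 \ra \la \Phi_1, \psi_j \ra \,+\, \la \eta, \psi_2 \ra \la \Phi_2, \psi_j \ra \,=\, \la \eta, \psi_j \ra.$$
In particular $\Psiz^*\la (I-\pi)\hat{\pj}^{\eps}_u \Xeps, \Psi \ra = 0$, so $\Psiz^*\la \pi \hat{\pj}^{\eps}_u \Xeps, \Psi \ra = \Psiz^*\la \hat{\pj}^{\eps}_u \Xeps, \Psi \ra$. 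Substituting this identity into the $G_q(\pi \hat{\pj}^{\eps}_u \Xeps)$-term of $A_\eps(u)$ and adding $B_\eps(u)$, the two cross-terms proportional to $G_q(\pi\hat{\pj}^{\eps}_u \Xeps)\Psiz^*\la \hat{\pj}^{\eps}_u \Xeps, \Psi \ra$ cancel, leaving
$$A_\eps(u)+B_\eps(u) \,=\, \frac{1}{\eps} G_q(\hat{\pj}^{\eps}_u \Xeps)\,\Psiz^*\la \hat{\pj}^{\eps}_u \Xeps, \Psi \ra \,-\, \bigg(b^{(1)}_{q,H}+b^{(2)}_{q,H}\bigg)(\ham(\hat{\pj}^{\eps}_u \Xeps)),$$
which is precisely the integrand of Proposition \ref{prop:quadnon_allfull}. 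Combining this algebraic decomposition with the triangle-inequality bound above completes the argument.

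There is really no obstacle here: all the substantive analysis (centering around the projected dynamics, exploiting the zero-average condition \eqref{eq:assumption_on_Gq_zero}, and controlling the $(I-\pi)$-component) has already been packaged into Propositions \ref{prop:quadnon_centercenter} and \ref{prop:quadnon_stablecenter}, and the present statement is a clean consequence of those two together with the biorthogonality identity $\la (I-\pi)\eta, \Psi \ra = 0$.
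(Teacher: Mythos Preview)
Your proof is correct and is exactly the argument the paper has in mind: its own proof consists of the single line ``Combining propositions \ref{prop:quadnon_centercenter} and \ref{prop:quadnon_stablecenter},'' and you have simply made explicit the identity $\la \pi\eta,\Psi\ra=\la \eta,\Psi\ra$ and the triangle-inequality step that this combination entails.
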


This result supplements the averaging results of section \ref{sec:avging_approxing}. It suggests that, in the limit $\eps \to 0$, $G_q$ would result in two additional drift terms (given by equations \eqref{eq:quaddrift1} and \eqref{eq:quaddrift2}) for the diffusion process limit of $\hprc^{\eps}(t)$. However, we are not able to prove the tightness in this case.

\begin{rmk}
\label{rmk:evol_of_taut} \emph{(evolution of $\tau_t$ used in the proof of proposition \ref{prop:quadnon_centercenter})}.
Let $z=\left[\begin{array}{c}z_1\\z_2\end{array}\right]$ denote the coordinates of $\eta \in P_{\Lambda}$ w.r.t $\Phi$, i.e. $z=\la \eta, \Psi\ra$. Then $\la \Th(s)\eta, \Psi\ra=e^{Bs}z$. Evolution of $z(\hat{\pj}^{\eps}_{t}X^\eps)$ is according to
$$dz=\frac{1}{\eps^2}Bz dt \,+\, \Psiz \left((\frac{1}{\eps}G_1+G_2)\big|_{\hat{\pj}^{\eps}_{t}X^\eps}dt+\sigma dW(t)\right).$$
From the definition of $\tau_t$, we have $\cos(\om \tau_t)=\frac{z_1}{||z||_2}$ and $\sin(\om \tau_t)=\frac{z_2}{||z||_2}$. So,
\begin{align*}
\om\,d\tau_t &=\frac{1}{||z||_2^2}(-z_2dz_1+z_1dz_2) \\
& \qquad +\frac12\left(\frac{2z_1z_2}{||z||_2^4}\la dz_1,dz_1\ra + 2\frac{(z_2^2-z_1^2)}{||z||_2^4}\la dz_1,dz_2\ra + \frac{-2z_1z_2}{||z||_2^4}\la dz_2,dz_2\ra\right).
\end{align*}
Using $-z_2\Psiz_1+z_1\Psiz_2=-\frac{1}{\om}\Psiz^*Bz$, and  $\la dz_i,dz_j\ra=\Psiz_i\Psiz_j\sigma^2dt,$ and
$$z_1z_2\Psiz_1^2+(z_2^2-z_1^2)\Psiz_1\Psiz_2-z_1z_2\Psiz_2^2=(z_2\Psiz_1-z_1\Psiz_2)(z_1\Psiz_1+z_2\Psiz_2)=\frac{1}{\om}\Psiz^*Bz\,\Psiz^*z,$$
we have, with $z=\la \hat{\pj}^{\eps}_{t}X^\eps,\Psi \ra$,
\begin{align*}
d\tau_t=-\frac{1}{\eps^2}dt-\frac{1}{\om^2}\frac{\Psiz^*Bz}{||z||_2^2}\left((\frac{1}{\eps}G_1+G_2)\big|_{\hat{\pj}^{\eps}_{t}X^\eps}dt+\sigma dW(t)\right)+\frac{1}{\om^2}\frac{\Psiz^*Bz\, \Psiz^*z}{||z||_2^4}\sigma^2 dt
\end{align*}
and
$$\la d\tau_t,d\tau_t\ra=\frac{1}{\om^4}\left(\frac{\Psiz^*Bz}{||z||_2^2}\right)^2\sigma^2 dt.$$
\end{rmk}


\section{Example}\label{sec:numsim}
Consider the following equation:
\begin{align}\label{eq:examplesys_quadadded}
dX(t)=-\frac{\pi}{2}X(t-1)dt + \eps \gamma_qX^2(t-1)dt +\eps^2 \gamma_c X^3(t-1)dt + \eps \sigma dW.
\end{align}
In this case $L_0\eta=-\frac{\pi}{2}\eta(-1)$. The characteristic equation $\lambda+\frac{\pi}{2}e^{-\lambda}=0$ has countably infinite roots on the complex plane. The roots with the largest real part are $\pm i\frac{\pi}{2}$. Hence $L_0$ satisfies the assumption \ref{ass:assumptondetsys}. The basis $\Phi$ for $P_{\Lambda}$ and the function $\Psi$ can be evaluated as
$$\Phi(\theta)=[\cos(\frac{\pi}{2}\theta)\,\,\,\sin(\frac{\pi}{2}\theta)], \qquad \quad \Psi(\tau)=N\left[\begin{array}{c}\cos(\frac{\pi}{2}\tau)-\frac{\pi}{2}\sin(\frac{\pi}{2}\tau) \\ \frac{\pi}{2}\cos(\frac{\pi}{2}\tau)+\sin(\frac{\pi}{2}\tau) \end{array}\right],$$
where $N=2/(1+(\pi/2)^2)$. Using $T(s)\cos(\frac{\pi}{2}\cdot)=\cos(\frac{\pi}{2}(t+\cdot)),$
the averaged drift and diffusions can be calculated (see \eqref{eq:avgdr1}, \eqref{eq:avgdr2} and \eqref{eq:avgdiff}) as
$$b_H^{(1)}(\hlev)=N\sigma^2, \qquad b_H^{(2)}(\hlev)=-\gamma_c\frac32\Psiz_2 \hlev^2, \qquad \sigma_H^{2}(\hlev)=2N\sigma^2 \hlev.$$

Now we evaluate $b^{(1)}_{q,H}(\hlev)$. Note that $G_q(\eta)=\gamma_q(\eta(-1))^2$. Let $\Gqfun(\eta):=\gamma_q(\eta(-1))^2\Psiz^*\la \eta,\Psi\ra$ as in \eqref{eq:defGqfun} and $\tau(\eta)$, $a^{(1)}_q$ be defined as in equations \eqref{eq:taudef}, \eqref{eq:useful_in_quaddrift1_calc_1}. Note that 
$$b^{(1)}_{q,H}(\hlev)=-\frac{1}{\timepr}\int_0^{\timepr}G_q(\Th(u)\eta)\,a^{(1)}_q(\Th(u)\eta)\,du, \quad \text{ for } \eta \in P_{\Lambda} \text{ such that } \ham(\eta)=\hlev.$$
To make calculations easy, we select $\eta(\cdot)=\sqrt{2\hlev}\cos(\frac{\pi}{2}\cdot)=\sqrt{2\hlev}\Phi\left[\begin{array}{c}1 \\ 0\end{array}\right] $,
 and for this $\eta$ it can be checked that $\Th(u)\eta=\sqrt{2\hlev}\,\Phi e^{Bu}\left[\begin{array}{c}1 \\ 0\end{array}\right]$, and $\tau(\Th(u)\eta)=\timepr-u$. Using
$$(\xi.\nabla)\Gqfun(\eta)=2\gamma_q\eta(-1)\xi(-1)\Psiz^*\la \eta,\Psi\ra + \gamma_q\eta^2(-1)\Psiz^*\la \xi,\Psi\ra,$$
and $\Psiz^* \la \Th(u)\eta, \Psi \ra = \sqrt{2\hlev}\,e^{Bu}\left[\begin{array}{c}1 \\ 0\end{array}\right]$ and $\Psiz \la \Th(s)\pih\Ind, \Psi \ra=\Psiz^* e^{Bs}\Psiz$ in evaluating $a^{(1)}_q$, we have 
$b^{(1)}_{q,H}(\hlev)=-\gamma_q^2\frac{1}{2\pi}\Psiz_1\Psiz_2(2\hlev)^2$. 

Now we evaluate $b^{(2)}_{q,H}(\hlev)$. Let $\Gqfun(\eta):=\gamma_q((\eta(-1))^2-(\pi\eta(-1))^2)\Psiz^*\la \eta,\Psi\ra$ as in \eqref{eq:defGqfun_sc} and $a_q$ be defined as in equation \eqref{eq:quaddrift2}. Then $b^{(2)}_{q,H}(\hlev)$ equals
\begin{align}\label{eq:bh2evaluseful}
\gamma_q^2\frac{1}{\timepr}\int_0^{\timepr}du \big((\Th(u)\eta)(-1)\big)^2\int_0^{\infty} 2 (\Th(s+u)\eta)\bigg|_{-1}(\Th(s)(I-\pi)\Ind)\bigg|_{-1}\Psiz^*\la \Th(s+u)\eta,\Psi\ra\,ds,
\end{align}
for $\eta \in P_{\Lambda}$ such that $\ham(\eta)=\hlev$. Taking $\eta(\cdot)=\sqrt{2\hlev}\cos(\frac{\pi}{2}\cdot)$ the above can be evaluated \emph{numerically}. Let $x(t),$ $t\in [-1,\infty]$ be the solution for 
$$\begin{cases}
\dot{x}(t)=-\frac{\pi}{2}x(t-1), \quad t>0 \\ x(t)=-\Phi(t)\Psiz, \quad  -1\leq t<0 \\ x(0)=1-\Phi(0)\Psiz, \quad  t=0 \end{cases}.$$
Then  $(\Th(t)(I-\pi)\Ind)(-1)=x(t-1)$. Because of the exponential decay of the norm of $(\Th(s)(I-\pi)\Ind)$, it is enough to evaluate the inner integral in \eqref{eq:bh2evaluseful} for a finite value of $s$ for a good enough approximation. On evaluating, we get $b^{(2)}_{q,H}(\hlev)\approx -0.1973\gamma_q^2(2\hlev)^2$.

The averaged equation corresponding to \eqref{eq:examplesys_quadadded}  is
\begin{align}\label{eq:examplesysavgd_quadadded}
d\hlev(t)=\bigg(N\sigma^2-\frac32\gamma_c\Psiz_2 \hlev^2-\gamma_q^2(\frac{1}{2\pi}\Psiz_1\Psiz_2+0.1973)(2\hlev)^2\bigg)dt + \sqrt{2N\sigma^2 \hlev}\, dW.
\end{align}

Now we illustrate our results employing numerical simulations.

Draw a random sample of $N_{samp}$ particles with $\hlev$ values $\{\hlev^0_i\}_{i=1}^{Nsamp}$. Simulate them according to  \eqref{eq:examplesysavgd_quadadded} for $0\leq t \leq T_{end}$.

Fix $\eps=0.025$. Simulate \eqref{eq:examplesys_quadadded} for $0 \leq t \leq T_{end}/\eps^2$ using initial trajectories $\{\sqrt{2\hlev^0_i} \cos(\om_c\cdot)\}_{i=1}^{Nsamp}$. 

Let $\tau^\eps:=\inf \{t\geq 0: |X(t)|\geq \sqrt{2H^*}\}$ and $\tau^\hlev:=\inf \{t\geq 0: \hlev(t)\geq H^*\}$

We can check whether the following pairs are close.
\begin{enumerate}
\item the distribution of $\ham(\pj_{T_{end}/\eps^2}X)$ from \eqref{eq:examplesys_quadadded} \emph{and} the distribution of $\hlev(T_{end})$ from \eqref{eq:examplesysavgd_quadadded},
\item distribution of $\eps^2 \tau^\eps$  \emph{and} the distribution of $\tau^\hlev$.
\end{enumerate}

We took $H^*=1.5$, $T_{end}=2$, $N_{samp}=4000$, and $\sqrt{2\{\hlev^0_i\}_{i=1}^{Nsamp}}=1.2$. Figures \ref{fig:tcdf} and \ref{fig:taucdf} answer the above questions. Three cases are considered with $\sigma=1$ fixed: $(\gamma_q=0,\gamma_c=0)$, $(\gamma_q=0,\gamma_c=1)$, $(\gamma_q=1/\sqrt{3},\gamma_c=0)$.

\begin{figure}
\centering
\begin{minipage}{.5\textwidth}
  \centering
\includegraphics[scale=0.4]{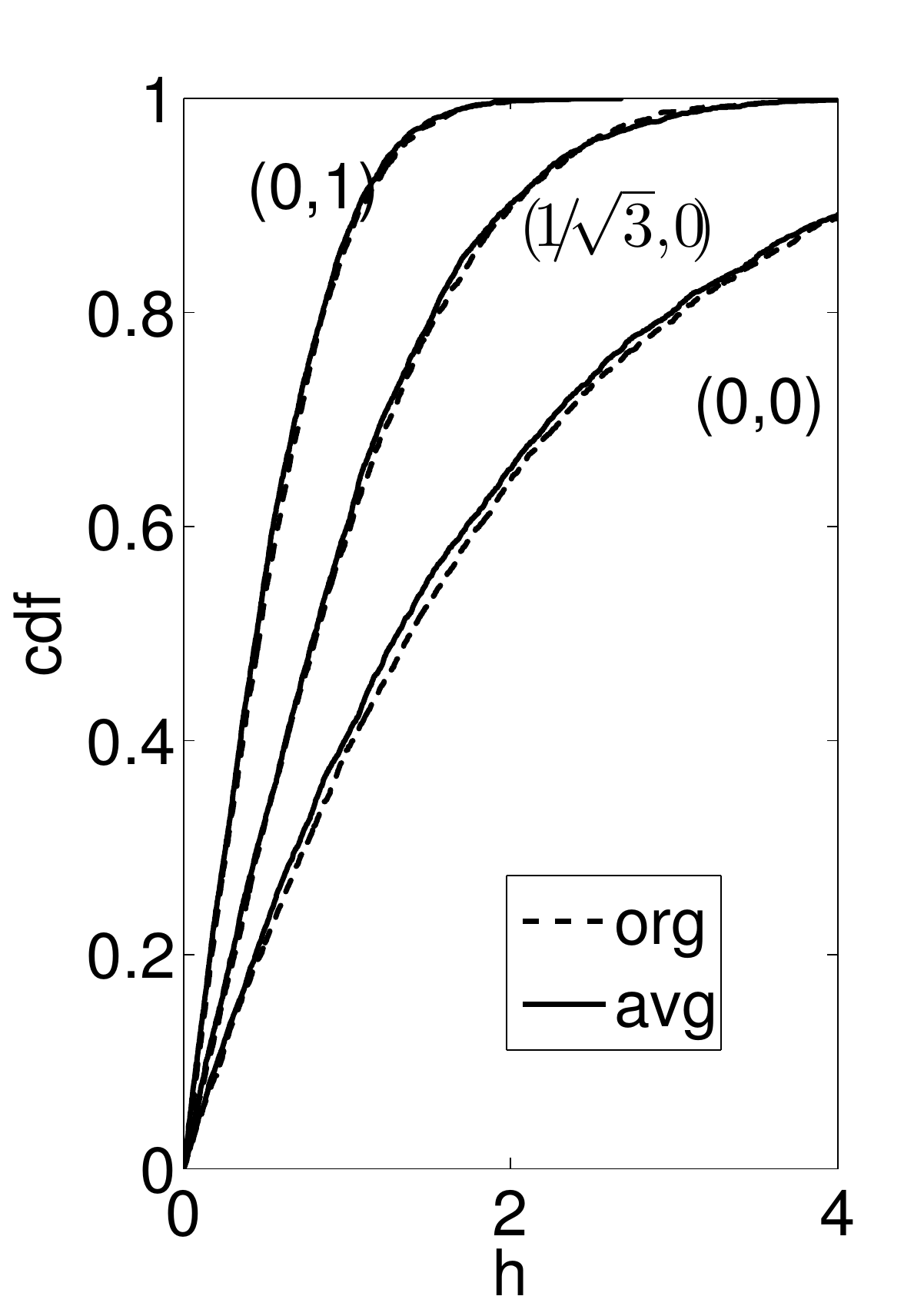}
  \caption{cdf of $\ham(\pj_{2/\eps^2}x)$ ({\tt{org}}) and $\hlev(2)$ ({\tt{avg}}). Numbers in the brackets indicate $(\gamma_q,\gamma_c)$ values.}
  \label{fig:tcdf}
\end{minipage}%
\begin{minipage}{.5\textwidth}
  \centering
  \includegraphics[scale=0.4]{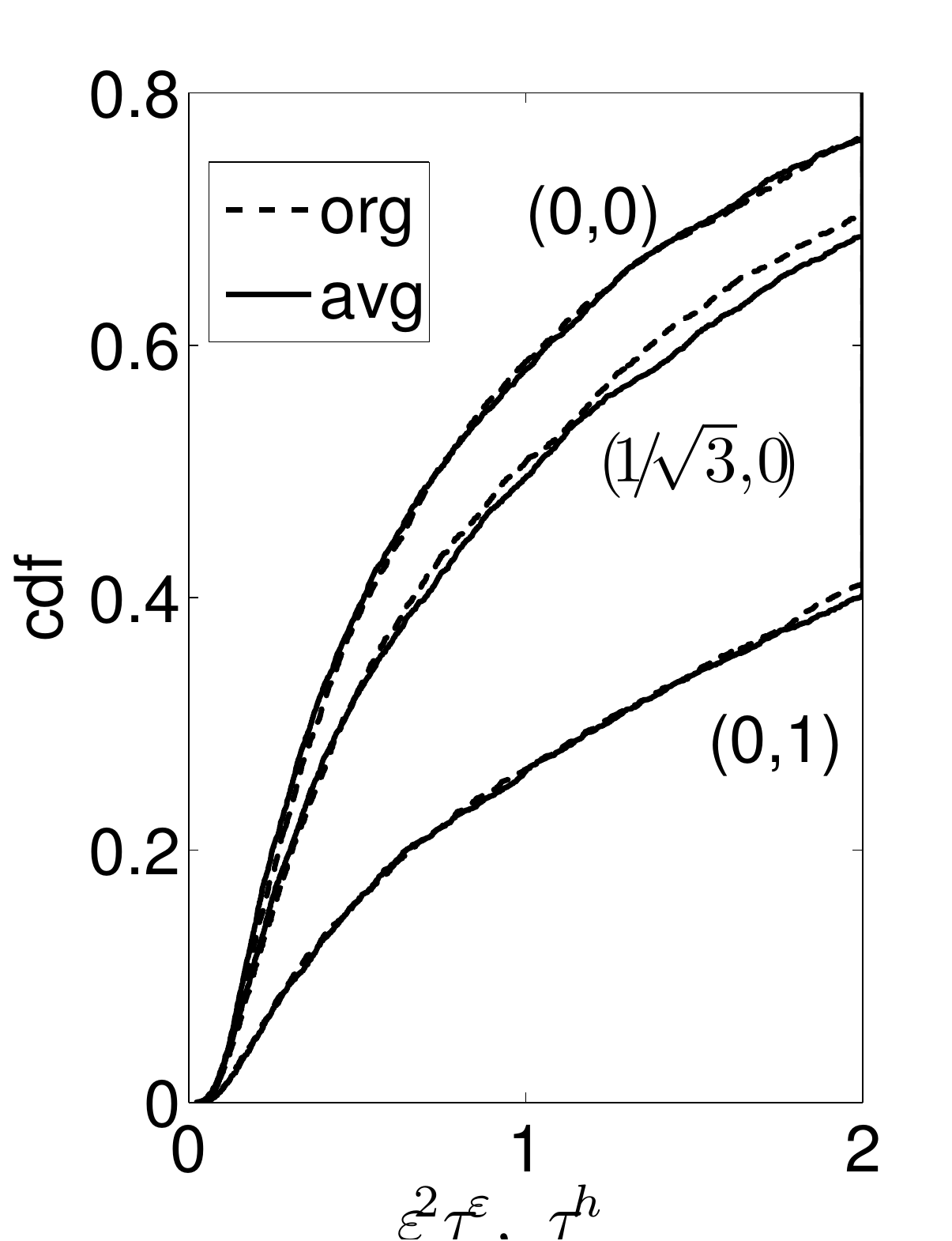}
  \caption{cdf of $\eps^2 \tau^\eps$ ({\tt{org}}) and cdf of $\tau^\hlev$  ({\tt{avg}}). Numbers in the brackets indicate $(\gamma_q,\gamma_c)$ values. The cdf value at $\eps^2\tau^\eps=2$ indicates the fraction of particles whose modulus exceeded $\sqrt{2H^*}$ before the time $2/\eps^2$.}
  \label{fig:taucdf}
\end{minipage}
\end{figure}

\section*{Acknowledgments}
The authors would like to acknowledge the support of the National Science Foundation under grant numbers CMMI 1000906 and 1030144. Any opinions, findings, and conclusions or recommendations expressed in this paper are those of the authors and do not necessarily reflect the views of the National Science Foundation.


\appendix

\section{Multiplicative Noise}\label{sec:multnoise}
This section deals with the case of $F$ depending on $\eta$ (not necessarily a constant).

Let $G:\C\to \R$ be a $C^2$ function satisfying the Lipshitz condition \eqref{eq:FLipcond}
and let $F:\C \to \R$ be a $C^2$ \emph{bounded} function satisfying the Lipshitz condition:
\begin{align}\label{mult:eq:FLipcond}
|F(\eta)-F(\tilde{\eta})| & \leq K_F||\eta-\tilde{\eta}||, \quad \forall \,\eta,\tilde{\eta} \in \C.
\end{align}
It can be shown that there exists a constant $K_g$ such that $F,G$ satisfy the growth condition \eqref{eq:FGgrowthforunique}.
In order to be able to prove a lemma analogous to lemma \ref{lem:fHapprox}, we need to make sure that the averaged diffusion coefficient is not degenerate. For this purpose, we also assume that $F$ satisfies \eqref{mult:eq:Flowboundgrowth}.

From theorem 1.1 of \cite{Reis_Emery_Ineq_voc_for_SDDE}, it can be easily seen that proposition \ref{prop:vocformula} holds when $\sigma$ is replaced by $F(\pj X)$ (with the additional assumptions on $F$ listed above).

First task is to show that the $Q_{\Lambda}$ projection is small. 

In proving lemma \ref{add:newlem:supbound},  the only property of $\sigma W=:Z$ used is that $Z$ is a martingale with quadratic variation bounded on $[0,T]$. This property still holds for $Z^\eps_t=\int_0^tF(\hat{\pj}_u\Xeps) dW(u)$ because $F$ is bounded. Consequently, result analogous to lemma \ref{add:newlem:supbound} holds even for $\Upsilon^{\eps}_s:=\left|\left|\int_0^s\Th(\frac{s-u}{\eps^2})(I-\pih)\Ind dZ^\eps(u)\right|\right|=\left|\left|\int_0^s\Th(\frac{s-u}{\eps^2})(I-\pih)\Ind F(\hat{\pj}_u\Xeps) dW(u)\right|\right|$. Now it is easy to see that proposition \ref{add:newprop:supboundGron} also holds for the present case of $F$ depending on $\eta$.

Fix  $H_*,H^*\in \R^+$ and let $$\Ssp:=\{\eta \in P_{\Lambda}\,:\,H_*<\ham(\eta)<H^*\}.$$
Assume that the initial condition $\hat{\pj}_0\Xeps$ is such that $\pi \hat{\pj}_0\Xeps \in \Ssp$. Define the stopping time
$$\stopt := \inf \{t\geq 0 \,:\, \pi\hat{\pj}^{\eps}_t\Xeps\not\in \Ssp \}.$$

\begin{prop}\label{mult:prop:stablenormtozero}
For any $\nu<1$,
\begin{align}\label{mult:eq:lem:stablenormtozero}
\lim_{\eps \to 0}\eps^{-\nu}\expteps  \int_0^{t\wedge \stopt} ||(I-\pi)\hat{\pj}^{\eps}_s \Xeps||\,ds =0.
\end{align}
\end{prop}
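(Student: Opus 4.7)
The plan is to mimic the proof of proposition \ref{prop:stablenormtozero} step by step, checking that each piece still works when $\sigma$ is replaced by the bounded Lipschitz function $F(\hat{\pj}^{\eps}_u\Xeps)$. The variation of constants formula \eqref{eq:vocnonlinstable} continues to hold in the multiplicative setting, as noted in the preamble of this section using the extension cited from \cite{Reis_Emery_Ineq_voc_for_SDDE}. So I would start from the analog of \eqref{eq:vocnonlinstable}, with the last term replaced by
\[
\left\|\int_0^s\Th\!\left(\tfrac{s-u}{\eps^2}\right)(I-\pih)\Ind\,F(\hat{\pj}^{\eps}_u\Xeps)\,dW(u)\right\|.
\]

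The first two terms on the RHS of \eqref{eq:vocnonlinstable} are handled exactly as in the constant-$\sigma$ case: the bound \eqref{eq:vocnonlinstable_esti1} relies only on the exponential decay of $\Th$ on $\hat Q_\Lambda$ (part 5 of lemma \ref{lem:collec_ext_res}), while \eqref{eq:vocnonlinstable_esti2} only invokes the Lipschitz property of $G$ together with the boundedness of $\pi\hat{\pj}^{\eps}_u\Xeps$ on $[0,T\wedge\stopt]$. Neither estimate involves $F$, so nothing changes here.

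The substantive point is to produce an analog of the expectation bound \eqref{add:newlem:avgbound} of lemma \ref{add:newlem:bound} for the stochastic term above. The algebraic decomposition \eqref{add:newlem:prfpart:term1}--\eqref{add:newlem:prfpart:term2}, the identity \eqref{add:newlem:prfpart:useful1}, and the trigonometric split of $\pih\Ind$ into $\cos(\cdot/\eps^2)$ and $\sin(\cdot/\eps^2)$ components are purely deterministic and carry over verbatim; they rewrite the norm as the sum of the five martingale terms analogous to \eqref{add:newlem:term1}--\eqref{add:newlem:term5}, but with the differential $\sigma\,dW(u)$ replaced by $F(\hat{\pj}^{\eps}_u\Xeps)\,dW(u)$. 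Since $F$ is bounded, say $\|F\|_\infty<\infty$, the It\^o-isometry bounds used for terms \eqref{add:newlem:term1} and \eqref{add:newlem:term3} go through with $\sigma^2$ replaced by $\expt F^2(\hat{\pj}^{\eps}_u\Xeps)\le \|F\|_\infty^2$; the Burkholder--Davis--Gundy bounds for \eqref{add:newlem:term2}, \eqref{add:newlem:term4}, \eqref{add:newlem:term5} likewise go through with $\sigma$ replaced by $\|F\|_\infty$, because the quadratic variation of $\int_0^\cdot F(\hat{\pj}^{\eps}_u\Xeps)\,dW(u)$ is bounded by $\|F\|_\infty^2 t$. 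This yields a bound of the form
\[
\expt\left\|\int_0^s\Th\!\left(\tfrac{s-u}{\eps^2}\right)(I-\pih)\Ind\,F(\hat{\pj}^{\eps}_u\Xeps)\,dW(u)\right\|\;\le\;C\eps,
\]
for a constant $C$ depending on $\|F\|_\infty$, $K$, $\kappa$, $\|L_0\|$, $r$ and $|\stabsol(0)|$, but independent of $\eps$ and $s$.

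Inserting this together with the unchanged bounds from \eqref{eq:vocnonlinstable_esti1}--\eqref{eq:vocnonlinstable_esti2} into the expectation of $\beta^\eps_s:=\|(I-\pi)\hat{\pj}^{\eps}_s\Xeps\|$ and mimicking \eqref{prop:stablenormtozero_esti1}--\eqref{eq:stablespaceestimate}, i.e.\ changing the order of integration and applying a Gr\"onwall-type argument, produces
\[
\expteps\int_0^{t\wedge\stopt}\|(I-\pi)\hat{\pj}^{\eps}_s\Xeps\|\,ds\;\le\;Ct\eps+C\eps^2
\]
for $\eps$ small enough. Dividing by $\eps^\nu$ with $\nu<1$ and letting $\eps\to 0$ establishes \eqref{mult:eq:lem:stablenormtozero}. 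The only place where extra care is needed is to ensure that boundedness of $F$ (rather than merely Lipschitz continuity) is genuinely required for the stochastic-integral bound to avoid a coupling of the form $\int_0^s e^{-\kappa(s-u)/\eps^2}\expt\beta^\eps_u\,du$ coming through $F$ itself; this is why the assumption that $F$ is bounded is imposed in this section. No additional obstacle arises.
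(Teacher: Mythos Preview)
Your proposal is correct and follows essentially the same route as the paper: the paper's proof simply observes that the only change from proposition \ref{prop:stablenormtozero} is in the stochastic term, writes $Z^\eps(t)=\int_0^t F(\hat{\pj}^{\eps}_u\Xeps)\,dW(u)$, and notes that the proof of the bound \eqref{add:newlem:avgbound} in lemma \ref{add:newlem:bound} uses only that the integrator is a martingale with bounded quadratic variation on $[0,T]$, which holds for $Z^\eps$ since $F$ is bounded. Your more explicit term-by-term verification of \eqref{add:newlem:term1}--\eqref{add:newlem:term5} spells this out but is the same argument.
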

\begin{proof}
The proof is in the same spirit as in the case for $F=\sigma$. Only difference lies in the term
\begin{align}\label{mult:eq:vocnonlinstable}
\expteps  \int_0^{t\wedge \stopt}\left|\left|\int_0^s\Th(\frac{s-u}{\eps^2})(I-\pih)\Ind F(\hat{\pj}^{\eps}_u \Xeps) dW(u)\right|\right|\,ds.
\end{align}
Write $Z^\eps(t)=\int_0^t F(\hat{\pj}^{\eps}_u \Xeps) dW(u)$. Then $Z^\eps$ is a martingale. We try to bound the term
\begin{align}\label{mult:eq:termtobebounded}
\expteps \sup_{\theta \in [-r,0]}\left|\int_0^s \left(\Th(\frac{s-u}{\eps^2})(I-\pih)\Ind\right)(\theta)\,dZ^\eps(u)\right|.
\end{align}
This is done in \eqref{add:newlem:avgbound} for the case of $Z^\eps=\sigma W$. Only properties of $Z^\eps$ used in \eqref{add:newlem:avgbound} are that $Z^\eps$ is a martingale with quadratic variation bounded for finite time. These properties still hold when $dZ^\eps_u=F(\hat{\pj}^{\eps}_u \Xeps) dW(u)$ with bounded $F$. So, there exists a constant $C$ such that $\eqref{mult:eq:termtobebounded}\leq C\eps$ for $s\in [0,T\wedge \stopt]$. Now follow same approach as in proof of \ref{prop:stablenormtozero}.
\end{proof}

The averaged drift coefficient $b_H^{(1)}$ is given as in \eqref{eq:avgdr1}
\begin{align}\label{mult:eq:avgdr1}
b_H^{(1)}(\hlev)&=\frac{1}{\tmpr}\int_0^{\tmpr}\frac12F^2(T(s)\sqrt{2\hlev}\cos\omega_c\cdot)\Psiz^*\Psiz\,ds,
\end{align}
$b_H^{(2)}$ is same as in \eqref{eq:avgdr2} and the averaged diffusion coefficient is given as in \eqref{eq:avgdiff}
\begin{align}\label{mult:eq:avgdiff}
\sigma^2_{H}(\hlev)&=\frac{1}{\tmpr}\int_0^{\tmpr}F^2(\sqrt{2\hlev}T(s)\cos\omega_c\cdot) \left( \Psiz^*\la \sqrt{2\hlev}T(s)\cos\omega_c\cdot,\Psi\ra\right)^2 \,ds. 
\end{align}
We make one further assumption on $F$:
\begin{align}\label{mult:eq:Flowboundgrowth}
\exists \,c:(0,\infty)\to \R^+ \text{ such that } \sigma_H^2(\hlev)>c(H_*) \text{ for all } h\geq H_*. 
\end{align}

Define an operator $\igenH$ by\footnote{$\igenH f_H\in C([H_*,H^*])$ means that $\igenH f_H\in C((H_*,H^*))$ and the limits $\lim_{\hlev \downarrow H_*}\igenH f_H$, $\lim_{\hlev \uparrow H^*}\igenH f_H$ exists and are finite. See chapter 8 section 1 of \cite{Ethier_Kurtz}.}
\begin{align}
\dom(\igenH)=\bigg\{ f_H \in C([H_*,H^*]) & \cap C^2((H_*,H^*))\,:\,\,\igenH f_H\in C([H_*,H^*]) \notag \\
& \text{ and } \lim_{\hlev \uparrow H^*}(\igenH f_H)(\hlev)=0= \lim_{\hlev \downarrow H_*}(\igenH f_H)(\hlev) \bigg\}, \notag \\ 
\text{for } \hlev \in (H_*,H^*), \qquad (\igenH f_H)(\hlev)&=b_H(\hlev)\dot{f}_H(\hlev)+\frac12\sigma^2_{H}(\hlev)\ddot{f}_{H}(\hlev). \label{mult:eq:igenHdef_limitproc}
\end{align}

\begin{thm}\label{mult:prop:mainresultaddnoise}
Under the assumptions on $F$ listed in this section, the statement of theorem \ref{prop:mainresultaddnoise} holds with $\stoptH$ replaced with $\stoptH:=\inf\{t\geq 0\,:\,\avgHproc(t)\geq H^* \text{ or }\avgHproc(t)\leq H_*\}.$
\end{thm}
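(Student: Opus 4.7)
The plan is to carry out the same three-step programme laid out in Remark \ref{rmk:mainproofmotiv}: tightness of the laws of $\hprc^\eps$, identification of any weak cluster point as a solution of the martingale problem for $\igenH$, and uniqueness of that martingale problem. Most of the machinery of sections \ref{sec:perturbedsys}--\ref{sec:mainresproof} carries over verbatim once we install the two-sided stopping time $\stopt = \inf\{t \geq 0 : \pi \hat{\pj}^\eps_t \Xeps \notin \Ssp\}$ with $\Ssp = \{\eta \in P_\Lambda : H_* < \ham(\eta) < H^*\}$. The only delicate modifications stem from the fact that $F$ now depends on $\eta$; the assumption \eqref{mult:eq:Flowboundgrowth} is precisely what lets the test-function construction of section \ref{sec:behavtestfunc} survive on $[H_*,H^*]$.

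First I would verify that the control of the $Q_\Lambda$-projection and of the martingale structure goes through. Proposition \ref{mult:prop:stablenormtozero} already handles $\int_0^{t\wedge\stopt}\|(I-\pi)\hat{\pj}^\eps_s\Xeps\|\,ds$, and Proposition \ref{add:newprop:supboundGron} extends (as noted in the appendix) because its proof uses only that $Z^\eps_t:=\int_0^t F(\hat{\pj}^\eps_u\Xeps)dW_u$ is a martingale with bounded quadratic variation, which still holds since $F$ is bounded. Propositions \ref{prop:Hmart} and \ref{prop:hofHmart} require no structural change; the quadratic variation formula simply keeps the factor $F^2(\eta)$ inside the averaged integrand. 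Tightness (Proposition \ref{prop:tightness}) also transfers: the boundedness of $F$ keeps $Q$, $(\sgenonqtf\ham)^{(1)}$ and the martingale-moment bound uniform in $\eps$, and the fourth-moment estimate on the stochastic convolution in Lemma \ref{lem:lemreq4tightness} uses only the boundedness of $F$ through a BDG-type argument, so the same proof applies.

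Next I would redo the test-function construction and the averaging step. Lemma \ref{lem:behavetestfunc} must be rewritten on $[H_*,H^*]$ with integrating factor adapted to the non-constant $\sigma_H^2$; the crucial ingredient is $\inf_{[H_*,H^*]}\sigma_H^2 \geq c(H_*)>0$ from \eqref{mult:eq:Flowboundgrowth}, which keeps the resolvent bounded and delivers a sequence $f^\eps \in C^4([H_*,H^*])$ with $\|f^\eps-f_H\|_\infty \to 0$, $\|\igenH f^\eps - \igenH f_H\|_\infty \to 0$, and $\sup_\eps \eps^\nu\|f^\eps\circ \ham\|_{C^4(\bar\Ssp)}<\infty$ as in Lemma \ref{lem:fHapprox}. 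Then the averaging Propositions \ref{prop:pre_projdiffgoestozero}, \ref{lem:projdiffgoestozero} and Lemma \ref{lem:diffgoestozero} go through with one genuine change: in Lemma \ref{lem:careful} the identity $(\sgenonqtf(\Phi_\varphi\circ\pi))^{(1)}(\eta) = (\sgenonqtf(\Phi_\varphi\circ\pi))^{(1)}(\pi\eta)$ no longer holds because $F^2$ depends on all of $\eta$. In its place I would use the Lipschitz estimate $|F^2(\eta)-F^2(\pi\eta)|\leq (2\|F\|_\infty K_F)\,\|(I-\pi)\eta\|$, then bound the resulting error by Proposition \ref{mult:prop:stablenormtozero}. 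The same substitution fixes the step in Lemma \ref{lem:diffgoestozero} where $(\sgenonqtf(f^\eps\circ\ham))^{(1)}$ was previously invariant under $\pi$.

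With these ingredients in place, the body of the proof in section \ref{sec:mainresproof} transfers mechanically: Proposition \ref{prop:hofHmart} gives that
\[
f^\eps\circ\ham(\hat{\pj}^\eps_{t\wedge\stopt}\Xeps) - f^\eps\circ\ham(\hat{\pj}^\eps_0\Xeps) - \int_0^{t\wedge\stopt}(\sgenonqtf(f^\eps\circ\ham))(\hat{\pj}^\eps_u\Xeps)\,du
\]
is an $\F_t$-martingale, and the modified Propositions above together with $\|f^\eps-f_H\|_\infty\to 0$ yield \eqref{eq:clustpointsolvesmartprob_inprop}, identifying every cluster point as a solution of the martingale problem for $\igenH$. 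Uniqueness of the stopped martingale problem on $[H_*,H^*]$ follows from Theorem 8.1.1 of \cite{Ethier_Kurtz} (or Stroock--Varadhan \cite{MDP}) because $b_H$ is Lipschitz (Remark \ref{rmk:crucial}) and $\sigma_H^2$ is continuous and bounded below by $c(H_*)>0$ on the compact interval. The main obstacle is precisely this non-degeneracy issue: without \eqref{mult:eq:Flowboundgrowth} the operator $\igenH$ degenerates at the left boundary and the resolvent estimate underlying Lemma \ref{lem:behavetestfunc} breaks down, which is why the theorem is stated with the two-sided exit from $(H_*,H^*)$ rather than just the one-sided exit from $(0,H^*)$ used for additive noise.
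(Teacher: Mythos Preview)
Your proposal is correct and follows essentially the same approach as the paper's own argument, which is itself only a sketch indicating the modifications needed to the additive-noise proof. You have correctly identified the key change---that $(\sgenonqtf(\Phi_\varphi\circ\pi))^{(1)}$ and $(\sgenonqtf(f^\eps\circ\ham))^{(1)}$ are no longer $\pi$-invariant and must be handled via the Lipschitz bound on $F^2$ combined with Proposition \ref{mult:prop:stablenormtozero}---and you have traced this modification through Lemma \ref{lem:diffgoestozero} as well, which the paper leaves implicit under ``whole of section \ref{sec:avging_approxing} still holds with a few minor changes.'' Your treatment of the test-function construction on $[H_*,H^*]$ using the non-degeneracy \eqref{mult:eq:Flowboundgrowth} matches Lemma \ref{mult:lem:behavetestfunc}, and your tightness argument via global boundedness of $F$ is a legitimate (slightly cleaner) variant of the paper's prescription to repeat the Lipschitz-plus-stable-projection trick in Proposition \ref{prop:tightness}.
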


The proof of above result follows the same strategy stated in remark \ref{rmk:mainproofmotiv}.

Analogous result to the lemma \ref{lem:behavetestfunc} is the following:
\begin{lem}\label{mult:lem:behavetestfunc} 
If $\poiF\in C([H_*,H^*])$, then there exists a solution of 
$$\igenH u=\poiF, \qquad \hlev \in (H_*,H^*)$$ 
such that $u\in C^1([H_*,H^*])$. This solution is unique upto the choice of $u(H_*)$ and $u'(H_*)$. Further, there exists constants $C_i$ independent of $\poiF$ such that
$$||u||_{C([H_*,H^*])}\leq |u(H_*)|+C_1|u'(H_*)|+C_2||\poiF||_{C([H_*,H^*])}.$$
\end{lem}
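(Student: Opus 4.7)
The plan is to view the equation $\igenH u = \poiF$ as a second-order linear ODE that is now \emph{non-singular on all of $[H_*,H^*]$}, in contrast to the additive-noise case where $\sigma_H^2(\hlev) = \consa_d \hlev$ vanished at $\hlev = 0$. Assumption \eqref{mult:eq:Flowboundgrowth} gives $\sigma_H^2(\hlev) \geq c(H_*) > 0$ on $[H_*, H^*]$, so the equation can be divided through and rewritten as
\begin{equation*}
\ddot{u}(\hlev) + p(\hlev)\dot{u}(\hlev) = q(\hlev), \qquad p(\hlev) := \frac{2b_H(\hlev)}{\sigma_H^2(\hlev)}, \qquad q(\hlev) := \frac{2\poiF(\hlev)}{\sigma_H^2(\hlev)},
\end{equation*}
where both $p$ and $q$ are continuous on the compact interval $[H_*, H^*]$ (continuity of $b_H$ follows from the formulas in \eqref{eq:avgdr1}, \eqref{eq:avgdr2} together with Remark \ref{rmk:crucial}, and continuity of $\sigma_H^2$ is manifest).

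Next, I would set $v = \dot{u}$ and solve the first-order linear ODE $\dot v + p v = q$ via the integrating factor
\begin{equation*}
I(\hlev) := \exp\!\left(\int_{H_*}^{\hlev} p(s)\,ds\right),
\end{equation*}
which is continuous and bounded above and below by positive constants on $[H_*,H^*]$. This yields
\begin{equation*}
\dot{u}(\hlev) = \frac{1}{I(\hlev)}\left(u'(H_*) + \int_{H_*}^{\hlev} I(s)\,q(s)\,ds\right),
\end{equation*}
and integrating once more from $H_*$ produces the explicit formula
\begin{equation*}
u(\hlev) = u(H_*) + \int_{H_*}^{\hlev} \frac{1}{I(\tau)}\left(u'(H_*) + \int_{H_*}^{\tau} I(s)\,q(s)\,ds\right)d\tau.
\end{equation*}
From this formula one reads off all three conclusions: the function $u$ belongs to $C^1([H_*, H^*])$ (in fact $C^2$ on the interior) and solves $\igenH u = \poiF$; the two free parameters are exactly $u(H_*)$ and $\dot u(H_*) = u'(H_*)$, so uniqueness holds up to those two constants; and the supremum bound on $u$ is obtained by estimating the nested integrals using the uniform bounds $\inf I > 0$, $\sup I < \infty$, and $\|q\|_{C([H_*,H^*])} \leq (2/c(H_*))\|\poiF\|_{C([H_*,H^*])}$.

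The contrast with Lemma \ref{lem:behavetestfunc} is conceptual rather than technical: there, the singularity of $\sigma_H^2$ at $\hlev = 0$ killed one degree of freedom (any $\dot u$ with the wrong behavior near $0$ blew up, which is why only $u(0)$ survives as a free constant in the bound). Here, because $\sigma_H^2$ stays bounded away from zero on the whole closed interval, the ODE is regular throughout and \emph{both} Cauchy data are free. There is no real obstacle; the only thing one must verify carefully is that the integrating factor and its reciprocal are uniformly bounded on $[H_*,H^*]$, which is immediate from continuity and compactness once \eqref{mult:eq:Flowboundgrowth} is in hand.
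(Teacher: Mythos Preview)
Your proposal is correct and follows essentially the same approach as the paper: the paper defines $J(\hlev):=\int_{H_*}^{\hlev}\frac{2b_H(s)}{\sigma_H^2(s)}\,ds$ (your $\log I$) and writes down the same explicit integrating-factor solution $u(\hlev)=c_1+c_2\int_{H_*}^{\hlev}e^{-J(s)}\,ds+\int_{H_*}^{\hlev}\int_{H_*}^{s}e^{-(J(s)-J(r))}\frac{2\poiF(r)}{\sigma_H^2(r)}\,dr\,ds$ with $c_1=u(H_*)$, $c_2=u'(H_*)$. Your added discussion of why both Cauchy data survive (versus the singular case of Lemma~\ref{lem:behavetestfunc}) and the explicit use of \eqref{mult:eq:Flowboundgrowth} to bound the integrating factor are helpful elaborations the paper leaves implicit.
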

\begin{proof}
Define $J(\hlev):=\int_{H_*}^h\frac{2b_H(s)}{\sigma_H^2(s)}ds$. Then
$$u(h)=c_1+c_2\int_{H_*}^he^{-J(s)}ds+\int_{H_*}^h\int_{H_*}^se^{-(J(s)-J(r))}\frac{2F(r)}{\sigma_H^2(r)}dr\,ds.$$
Here $c_1=u(H_*)$ and $c_2=u'(H_*)$.
\end{proof}

Result analogous to lemma \ref{lem:fHapprox} can be easily be proved.

Whole of section \ref{sec:avging_approxing} still holds with a few minor changes---for example, in proof of proposition \ref{prop:pre_projdiffgoestozero}, we cannot use $(\sgenonqtf (\Phi_{\varphi}\circ \pi))^{(1)}(\hat{\pj}^{\eps}_{u} \Xeps)=(\sgenonqtf (\Phi_{\varphi}\circ \pi))^{(1)}(\pi\hat{\pj}^{\eps}_{u} \Xeps)$ anymore; we have to estimate the term involving $(\sgenonqtf (\Phi_{\varphi}\circ \pi))^{(1)}$ in the same way as we did for $(\sgenonqtf (\Phi_{\varphi}\circ \pi))^{(2)}$. Similar change must be made in the proof of proposition \ref{prop:tightness}.

The following result is required for the proof of lemma \ref{lem:lemreq4tightness}. 
\begin{lem}\label{mult:lem:lemreq4tightness}
There exists $\eps_0>0$ and a constant $C$ independent of $\eps$ such that, $\forall \eps \leq \eps_0$, $\expt \sup_{t\in[0,T\wedge \stopt]}||(I-\pi)\hat{\pj}^{\eps}_t \Xeps||^4 \leq C.$
\end{lem}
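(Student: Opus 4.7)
The plan is to mirror the proof of lemma \ref{lem:lemreq4tightness} almost verbatim, exploiting the assumption that $F$ is bounded (so that the martingale $Z^\eps(t) := \int_0^t F(\hat{\pj}^\eps_u \Xeps)\,dW(u)$ has bounded quadratic variation on $[0,T]$) and plays the same role as $\sigma W$ did in the additive case. Starting from the variation of constants formula of proposition \ref{prop:vocformula} (whose extension to $F$ depending on $\eta$ was noted at the top of section \ref{sec:multnoise}), I will apply Minkowski's inequality to split $\expt\sup_{t\in[0,T\wedge\stopt]}\|(I-\pi)\hat{\pj}^\eps_t \Xeps\|^4$ into three pieces corresponding to the initial-condition term, the $G$-drift term, and the stochastic integral against $dW$.

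The first piece is controlled by $\|(I-\pi)\hat{\pj}^\eps_0\Xeps\|^4$ using part 5 of lemma \ref{lem:collec_ext_res}. The second piece (drift term) is handled exactly as in the additive case: write $G(\eta) = G(\pi\eta) + (G(\eta)-G(\pi\eta))$, use the Lipschitz condition \eqref{eq:FLipcond} and boundedness of $G\circ\pi$ on $\bar{\Ssp}$, and integrate the exponential decay $Ke^{-\kappa s/\eps^2}$ to produce a factor of $\eps^2$. This generates a term of the form $8 K_G^4 (K\eps^2/\kappa)^4\,\expt\sup_{t\in[0,T\wedge\stopt]}\|(I-\pi)\hat{\pj}^\eps_t\Xeps\|^4$, which for small enough $\eps$ can be absorbed into the left-hand side exactly as in the original proof.

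For the third piece, write
\[
\int_0^t \Th\!\big(\tfrac{t-u}{\eps^2}\big)(I-\pih)\Ind\,F(\hat{\pj}^\eps_u\Xeps)\,dW(u) = \int_0^t \Th\!\big(\tfrac{t-u}{\eps^2}\big)(I-\pih)\Ind\,dZ^\eps(u).
\]
Evaluating at $\theta\in[-r,0]$ and setting $g(s):=\big(\Th(s)(I-\pih)\Ind\big)(-r)$ (so that $(\Th(s)(I-\pih)\Ind)(\theta) = g(s+r+\theta)$ for suitable $s$), the inner integral becomes $\int_0^t g\!\big(\tfrac{t-u}{\eps^2}+r+\theta\big)\,dZ^\eps(u)$. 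Splitting at $u=(t+\eps^2\theta)\vee 0$ and applying integration by parts on each piece, exactly as in the original proof, reduces everything to expressions of the form $|g(\cdot)|\,|Z^\eps(\cdot)|$ and $\int |g'(\cdot)|\,|Z^\eps(u)|\,du$. The key point is that the original estimates only used (i) the exponential bounds $|g(s+r)|\le K e^{-\kappa s}$ and $|g'(s+r)| \le \|L_0\| K e^{-\kappa s}$ (still valid here), and (ii) the Burkholder--Davis--Gundy bound $\expt\sup_{t\in[0,T]}|\sigma W(t)|^4 < \infty$. Applying BDG to $Z^\eps$ instead gives
\[
\expt\sup_{t\in[0,T]}|Z^\eps(t)|^4 \,\le\, C_{bdg}\,\expt\,\langle Z^\eps\rangle_T^{\,2} \,\le\, C_{bdg}\,\|F\|_\infty^4\,T^2 \,<\, \infty,
\]
since $F$ is bounded. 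Thus the same constant $\tilde C$ (with $\sigma$ replaced by $\|F\|_\infty$) bounds the stochastic-integral contribution uniformly in $\eps$.

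Collecting the three pieces yields an inequality of the form
\[
\Big(\tfrac{1}{64} - 8 K_G^4(K\eps^2/\kappa)^4\Big)\,\expt\sup_{t\in[0,T\wedge\stopt]}\|(I-\pi)\hat{\pj}^\eps_t\Xeps\|^4 \;\le\; \|(I-\pi)\hat{\pj}^\eps_0\Xeps\|^4 + 8 C_5^4 (K\eps^2/\kappa)^4 + \tilde C,
\]
from which the lemma follows for all $\eps$ small enough that the prefactor on the left is positive. I expect no genuine obstacle here beyond the bookkeeping: the only substantive ingredient needed beyond the additive argument is the uniform boundedness of $\|F\|_\infty$, which converts $Z^\eps$ into a martingale whose fourth-moment supremum is $\Ord(1)$ on $[0,T]$. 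The main technical step is simply verifying that no step of the original integration-by-parts argument used the independence-of-increments structure of $\sigma W$ beyond the martingale/BDG estimate, which is indeed the case.
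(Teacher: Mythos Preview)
Your proposal is correct and follows exactly the paper's own approach: replace $\sigma W$ by the martingale $Z^\eps(t)=\int_0^t F(\hat{\pj}^\eps_u\Xeps)\,dW(u)$, observe that boundedness of $F$ gives $\expt\sup_{t\in[0,T]}|Z^\eps(t)|^4\le C\|F\|_\infty^4 T^2$ via Burkholder--Davis--Gundy, and otherwise repeat the proof of lemma~\ref{lem:lemreq4tightness} verbatim. The paper's proof is a two-line remark to this effect; your write-up simply unpacks the same steps.
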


\begin{proof}
Same as the proof of lemma \ref{lem:lemreq4tightness} except for $\sigma W$ replaced by $Z^\eps(t)=\int_0^t F(\hat{\pj}^{\eps}_u \Xeps) dW(u)$. Note that $\expt \sup_{t\in[0,T]}|Z^\eps(t)|^4 \,\,\leq \,\,C(\max |F|)^2T^2$ by Burkholder-Davis-Gundy inequality. 
\end{proof}
This completes the proof of theorem \ref{mult:prop:mainresultaddnoise}.

Now we give an example. Consider
\begin{align}\label{mult:eq:examplesys}
dX(t)=L_0(\pj_t X)dt + \eps L_1(\pj_tX) dW,
\end{align}
where $L_i$ are bounded linear operators, with deterministic system satisfying assumption \ref{ass:assumptondetsys}. Note that $|F(\eta)|=|L_1(\eta)|$ is not bounded and hence equation \eqref{mult:eq:examplesys} does not satisfy the hypothesis. Nevertheless we discuss this example because numerical simulations seem to show close agreement with the results obtained above.  

Let $||\Psiz||^2:=\Psiz^*\Psiz$ and $||L_1\Phi||^2:=(L_1\Phi_1)^2+(L_1\Phi_2)^2$. 
The averaged equation corresponding to \eqref{mult:eq:examplesys} is
\begin{align}\label{mult:eq:examplesysavgd}
d\hlev(t)\,=b_H(\hlev) dt\,+\,\sigma_H(\hlev)\,dW,
\end{align}
where
\begin{align*}
b_H(\hlev)=\frac12||\Psiz||^2||L_1\Phi||^2\hlev, \qquad 
\sigma_H^2(\hlev)&=\left(\frac12||\Psiz||^2||L_1\Phi||^2+(\Psiz^*L_1\Phi)^2\right)\hlev^2.
\end{align*}
The Lyapunov exponent for \eqref{mult:eq:examplesysavgd} can be calculated to be 
\begin{align}\label{mult:eq:lyapcalc}
\lambda_{avg}&=\left(\frac12||\Psiz||^2||L_1\Phi||^2\right)-\frac12\left(\frac12||\Psiz||^2||L_1\Phi||^2+(\Psiz^*L_1\Phi)^2\right)\\
&=\frac14||\Psiz||^2||L_1\Phi||^2\left(1-2\left(\frac{\Psiz^*L_1\Phi}{||\Psiz||\,||L_1\Phi||}\right)^2\right). \notag
\end{align}

Define $\lambda^\eps(t):=\frac1t \log\, \sup_{s\in[t,t+nr]}|X(s)|$ with $n\in \mathbb{N}$ such that $nr\geq \frac{2\pi}{\om_c}$ (here $n$ is chosen so as to avoid oscillations in the modulus of $X$). It can be checked that for large $t$, $\lambda^\eps(t)$ is close to $\eps^2\frac12\lambda_{avg}$. The $\frac12$ arises from the fact that $\hlev$ is quadratic in $X$.

We took $L_0\eta=-\frac{\pi}{2}\eta(-1)$ and $L_1\eta=\eta(-1)$. The Lyapunov exponent for \eqref{mult:eq:examplesysavgd} can be calculated to be $\lambda_{avg} \approx -0.122$.
Five realizations of trajectories of \eqref{mult:eq:examplesys} are simulated with $\eps=0.1$; and in the figure \ref{mult:fig:linearlyapfull} we show mean, min and max (of the five trajectories) for $\lambda^\eps(t):=\frac1t \log\, \sup_{s\in[t-r,t]}|X(s)|$. For $t$ large $\lambda^\eps(t)$ is close to $-0.0005$ and we have $\eps^2\frac12\lambda_{avg} \approx -0.0006$.
\begin{figure}
\centering
  \includegraphics[scale=0.6]{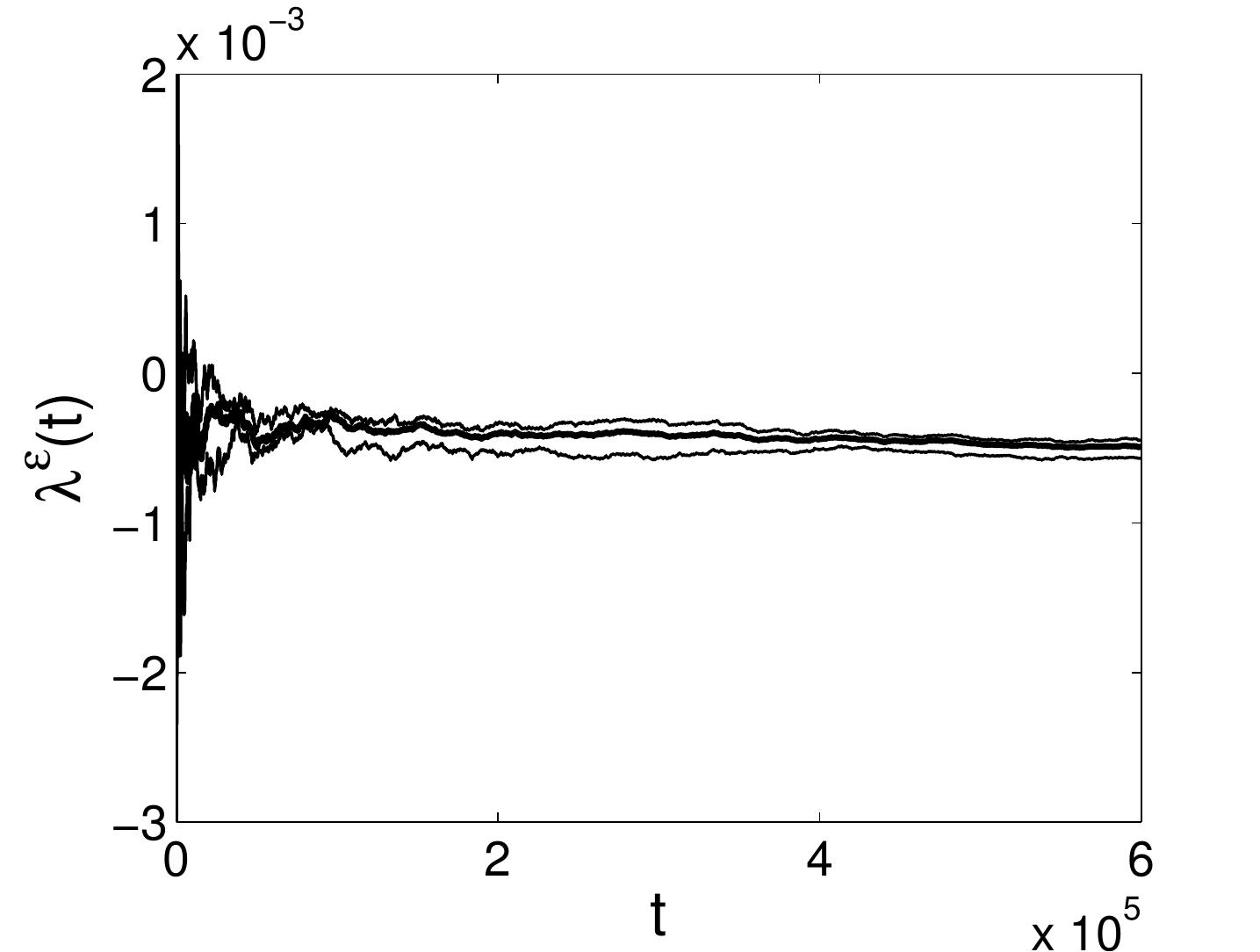}
  \caption{mean, min and max (of the five trajectories) for $\lambda^\eps(t):=\frac1t \log\, \sup_{s\in[t-r,t]}|X(s)|$}
  \label{mult:fig:linearlyapfull}
\end{figure}

The stability condition $\lambda_{avg}<0$ translates to 
\begin{align}\label{mult:eq:awsomeresultforlinear}
\left|\frac{\Psiz^*L_1\Phi}{||\Psiz||\,||L_1\Phi||}\right|>\frac{1}{\sqrt{2}}
\end{align}
leading to the interpretation that \emph{the alignment of the vectors $\Psiz$ and $L_1\Phi$ is a measure of the stability of the system.}

We do not claim that $\eps^2\frac12\lambda_{avg}$ is the maximal exponential growth rate of \eqref{mult:eq:examplesys}. The result that we proved concerns with weak convergence and hence we cannot comment on the almost sure properties of the trajectories.

 Further, we are restricting to systems satisfying assumption \ref{ass:assumptondetsys}. \cite{SEAM_AOP_2} discusses methods to obtain bounds on the maximal exponential growth rates of more general class of delay equations. However the bounds given in \cite{SEAM_AOP_2} are not optimal for systems satisfying assumption \ref{ass:assumptondetsys}. For example, consider
\begin{align}\label{mult:eq:tocomparewithSEAM}
dX(t)=-\frac{\pi}{2}X(t-1)dt+\eps\left(\int_{-1}^0 x(t+s)ds\right)dW
\end{align}
and compare with equation $VI$ of \cite{SEAM_AOP_2}. According to theorem 4.1 of \cite{SEAM_AOP_2} the maximal exponential growth rate $\lambda_1$ of \eqref{mult:eq:tocomparewithSEAM} is bounded above by
$$\lambda_1\leq \inf\{\theta(\delta,\alpha)\,:\,\delta\in \R, \,\,\alpha\in \R^+\},$$
where
\begin{align}\label{mult:eq:thetaofSEAM}
\theta(\delta,\alpha)\,:=\,-\delta \,+\,\left(\delta+\frac12\alpha(\pi/2)^2e^{2\delta}+\frac{1}{2\alpha}\right)\vee\left(\frac{\alpha}{2}\eps^2e^{2\max(\delta,0)}\right).
\end{align}
Assume $\eps \ll \frac{\pi}{2}$. For $\delta\geq 0$, we have $$\theta(\delta,\alpha)=\frac12\alpha(\pi/2)^2e^{2\delta}+\frac{1}{2\alpha}, \qquad \inf_{\alpha>0}\theta(\delta,\alpha)=\frac{\pi}{2}e^{\delta}, \qquad \inf_{\alpha>0,\delta>0}\theta(\delta,\alpha)=\frac{\pi}{2}.$$

Let $\delta_*^\eps(\alpha)<0$ be the solution of 
\begin{align}\label{mult:eq:thetaofSEAMdeltcrit}
\delta+\frac12\alpha(\pi/2)^2e^{2\delta}+\frac{1}{2\alpha}=\frac{\alpha}{2}\eps^2.
\end{align}
Note that $\frac12\alpha(\pi/2)^2e^{2\delta}+\frac{1}{2\alpha}$ is atleast $\frac{\pi}{2} e^{\delta}$. And solution of $\delta+\frac{\pi}{2} e^{\delta}=0$ is approximately $-0.745$. So, $\delta_*^0(\alpha)<-0.745$ for any $\alpha$. For $\eps$ very small, $\delta_*^\eps(\alpha)$ will be very close to  $\delta_*^0(\alpha)$.

For $\delta<\delta_*^\eps(\alpha)$, we have
$$\theta(\delta,\alpha)=-\delta+\eps^2\frac{1}{2}\alpha, \qquad \inf_{\delta<\delta_*^\eps(\alpha)}\theta(\delta,\alpha)=-\delta_*^\eps(\alpha)+\eps^2\frac{1}{2}\alpha, \qquad \inf_{\alpha>0}\inf_{\delta<\delta_*^\eps(\alpha)}\theta(\delta,\alpha)=\inf_{\alpha>0}-\delta_*^\eps(\alpha).$$
For $\delta_*^\eps(\alpha)<\delta<0$, we have $\theta(\delta,\alpha)=\frac12\alpha(\pi/2)^2e^{2\delta}+\frac{1}{2\alpha}$, 
$$\inf_{\delta_*^\eps(\alpha)<\delta<0}\theta(\delta,\alpha)=-\delta_*^\eps(\alpha)+\eps^2\frac{1}{2}\alpha, \qquad \inf_{\alpha>0}\inf_{\delta_*^\eps(\alpha)<\delta<0}\theta(\delta,\alpha)=\inf_{\alpha>0}-\delta_*^\eps(\alpha).$$
Because $\delta_*^\eps(\alpha)$ is close to $\delta_*^0(\alpha)$, $\inf_{\alpha>0}-\delta_*^\eps(\alpha)$ would be very close to or greater than $0.745$. 

So, bound given by \cite{SEAM_AOP_2} on $\lambda_1$ is close to $0.745$ but results obtained in this paper indicate that (did not prove) $\lambda_1$ is of order $\eps^2$. The suboptimality of the bounds in \cite{SEAM_AOP_2} for systems satisfying assumption \ref{ass:assumptondetsys} might be because exponential shift by a real number, as done in theorem 4.1 of \cite{SEAM_AOP_2},  does not capture the effect of purely imaginary eigenvalues.
\end{document}